\documentclass[11pt]{article}
\usepackage[top=1in,bottom=1.25in,left=1.25in,right=1.25in]{geometry}

\usepackage{graphicx}
\usepackage{ragged2e}
\usepackage{tabto}
\usepackage{amssymb}
\usepackage{float}
\usepackage{amsmath}
\usepackage{amsthm}
\usepackage{epsfig}
\usepackage{subfigure}
\usepackage{amscd}
\usepackage{color}
\usepackage{url}
\usepackage{hyperref}

\usepackage{cleveref}
\usepackage{afterpage}
\usepackage{fancybox}
\usepackage{ifthen}
\usepackage{multicol}
\usepackage[font=footnotesize]{caption}
\usepackage{bigints}

\usepackage{thmtools,thm-restate}

\usepackage{hyperref}

\usepackage[sf,bf,medium]{titlesec}

\usepackage{dsfont}

\setlength{\marginparwidth}{0.72in}


\floatstyle{plain}
\restylefloat{figure}

\newcommand\PI{\operatorname{PI}}

\newcommand\loc{\operatorname{loc}}
\newcommand\Dom{\operatorname{Dom}}

\newcommand\dist{\operatorname{dist}}

\newcommand\Span{\operatorname{span}}
\newcommand\tot{\operatorname{tot}}
\newcommand\In{\operatorname{in}}

\newcommand\out{\operatorname{out}}

\newcommand\cX{\mathcal X}
\newcommand\cY{\mathcal Y}

\newcommand\cQ{\mathcal Q}
\newcommand\cK{\mathcal K}

\newcommand\cB{\mathcal B}

\newcommand\cP{\mathcal P}

\newcommand\bEta{\boldsymbol \eta}
\newcommand\tbEta{\widetilde{\boldsymbol \eta}}

\newcommand\bj{\boldsymbol j}

\newcommand\tpsi{\widetilde \psi}

\newcommand\tD{\widetilde D}

\newcommand\tL{\widetilde L}

\newcommand\cW{\mathcal W}

\newcommand\bxi{\boldsymbol \xi}

\newcommand{\sdrx}{\rho(x)}

\newcommand\Ker{\operatorname{ker}}

\newcommand\thop{\theta_0^{\bot}}

\newcommand\cA{\mathcal{A}}
\newcommand\tcA{\widetilde{\mathcal{A}}}
\newcommand\cC{\mathcal{C}}

\newcommand\cS{\mathcal{S}}
\newcommand\lL{\lambda_L}

\newcommand\cD{\mathcal{D}}

\newcommand\cZ{\mathcal Z}


\renewcommand\Im{\operatorname{Im}}

\newcommand\bbC{\mathbb C}

\newcommand\bbN{\mathbb N}

\newcommand\bbR{\mathbb R}
\newcommand\bbZ{\mathbb Z}

\newcommand\pa{\partial}

\newcommand\restrictedto{\upharpoonright}


\newcommand\subsubset{\subset\!\subset}










\newcommand\Id{\operatorname{Id}}
































\DeclareMathOperator{\dR}{dR}




\newtheorem{theorem}{Theorem}
\newtheorem{proposition}{Proposition}
\newtheorem{corollary}{Corollary}
\newtheorem{lemma}{Lemma}

\theoremstyle{definition}

\theoremstyle{remark}
\newtheorem{remark}{Remark}

\numberwithin{equation}{section}

\crefname{equation}{}{}

\begin{document}


\begin{titlepage}

  \raggedleft
  {\texttt{arXiv Preprint\\
    \today}}
  
  \hrulefill

  \vspace{2\baselineskip}

  \raggedright
  {\LARGE \sffamily\bfseries Type-I Superconductors in the Limit as the London Penetration Depth Goes to 0}
  
\bigskip
  {\bf\large This paper is dedicated to the memory of J.J.~Kohn, who loved the Hodge decomposition.} 
  
 \vspace{\baselineskip}
 
 \vspace{\baselineskip}
 
  \normalsize Charles L. Epstein \\
    \small \emph{Flatiron Institute, Center for Computational Mathematics \\
    New York, NY 10010}
 
  \texttt{cepstein@flatironinstitute.org}
  
 \vspace{\baselineskip}
    \normalsize Manas Rachh \\
    \small \emph{Flatiron Institute, Center for Computational Mathematics \\
    New York, NY 10010}
    
    \texttt{mrachh@flatironinstitute.org}
  
 \vspace{\baselineskip}
    \normalsize Yuguan Wang \\
    \small \emph{University of Chicago, Department of Statistics \\
    Chicago, IL 55455}
    
    \texttt{yuguanw@uchicago.edu}

    \normalsize

 \vspace{2\baselineskip}
\end{titlepage}

\begin{abstract}
\justifying
\noindent

This paper provides an explicit formula for the approximate solution of the
static London equations. These equations describe the currents and magnetic
fields in a Type-I superconductor.  We represent the magnetic field as a 2-form
and the current as a 1-form, and assume that the superconducting material is
contained in a bounded, connected set, $\Omega,$ with smooth boundary. The
“London penetration depth” gives an estimate for the thickness of the layer near
$\pa\Omega$ where the current is largely carried.  In an earlier
paper,~\cite{EpRa1}, we introduced a system of Fredholm integral equations of
second kind, on $\pa\Omega,$ for solving the physically relevant scattering
problems in this context. In real Type-I superconductors the penetration depth
is very small, typically about $100$nm, which often renders the integral
equation approach computationally intractable. In this paper we provide an
explicit formula for approximate solutions, with essentially optimal error
estimates, as the penetration depth tends to zero. Our work makes extensive use
of the Hodge decomposition of differential forms on manifolds with boundary, and thus
evokes Kohn’s work on the tangential Cauchy-Riemann equations.
\\
    \vspace*{1ex}
    
    \noindent {\bf Keywords}: type-I superconductor, approximate solution, London Penetration Depth, scattering, error estimates.
\end{abstract}

\tableofcontents


\section{Introduction}
Type-I superconducting materials are a subclass of materials that are
superconducting at sufficiently low temperatures, as evidenced by zero
resistance to the conduction of electrical currents and the Meissner effect, see~\cite{tinkham2004introduction}. In a
type-I superconductor the transition to a normal, ohmic conductor is an abrupt
first order phase transition that occurs when the applied magnetic field
strength, current or temperature exceed certain critical thresholds. As such,
this phase is usually described by a phase diagram with axes for temperature,
applied field strength and current density. See Figure~\ref{phase_diag}. In
these materials Ohm's law relating the current to the electric field,
$\bxi=\sigma\bj,$ is replaced with the London equations. In a static type-I
superconductor the electric field is zero, and the London equations relate the
current, $\bj,$ to the magnetic field, $\bEta,$
\begin{equation}
  d^*\bEta=\bj,\quad d\bj=\frac{-1}{\lambda_L^2}\bEta.
\end{equation}
Throughout this paper, electric fields and currents are represented by 1-forms, and magnetic field by 2-forms.

\begin{figure}[!htbp]
\begin{center}
\includegraphics[width=0.7\linewidth]{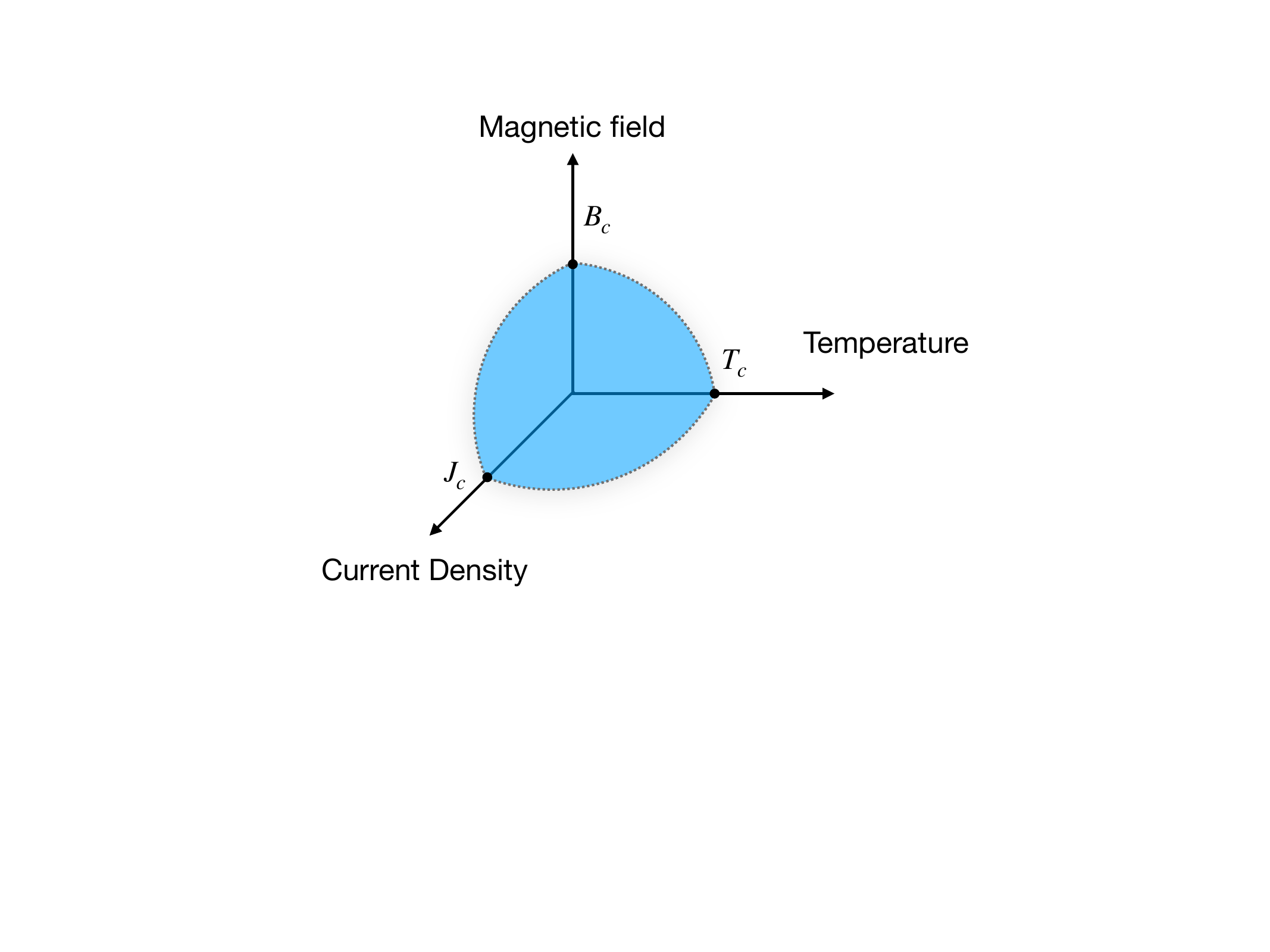}
\caption{Phase diagram for a typical type-I superconductor; the material is superconducting in the interior of the shaded region. Figure from~\cite{Ainslie}.}
\label{phase_diag}
\end{center}
\end{figure}

Type-I superconductors have a characteristic length, called the London
penetration depth ($\lambda_L$), which is a measure of the thickness of the
boundary layer that carries most of the current in the superconducting
state. For materials used in engineering applications, this length is very
small, typically ranging from 10 to 200nm. The material itself is often many
multiples of this length. The large separation of scales, between the dimensions
of the superconducting material and the London penetration depth, presents a
major challenge for the efficient and accurate numerical computation of the
fields and currents generated by superconductors. In~\cite{EpRa1} we present a
method for solving these problems, using an invertible system of Fredholm
equations of second kind. While this works, in principle, for any $\lambda_L>0,$
the method becomes impractical to implement when $\lambda_L$ is very small, compared to the size of the superconductor.

In this work, we analyze two aspects of the theory of type-I superconductors,
which are essential to the efficiency of their numerical simulation in
physically relevant regimes.  We derive the limiting differential equations
governing the magnetic fields and currents in superconductors as $\lambda_{L}
\to 0^{+}$ (which is a singular limit). In this limit, we show that governing
equations reduce to a well-posed version of the magneto-statics problem in the
exterior of the superconducting material, the currents inside the
superconducting material converge to a current sheet on the boundary of the
superconducting material, and finally the magnetic field inside the
superconducting material converges to zero.  Using the solution to the limiting
differential equations, we construct an explicit approximate solution for
the $\lambda_L>0$
London equations, which is accurate, in the $L^2$-topology,
to $O(\lambda_{L}^{1-\varepsilon}),$ for any  $\varepsilon > 0$.

Let $\Omega\subset\bbR^3$ denote a bounded open set with a smooth boundary,
which contains the superconducting material. We first analyze the $\lambda_L\to
0^+$ limit assuming that $\pa\Omega$ connected, and then extend our results to a
more general class of domains, where the boundary $\pa \Omega$ contains 2 
connected components. This is important in applications as the current in a
superconductor is mostly carried in a neighborhood of $\pa \Omega$ with
thickness comparable to $\lambda_{L}$. The domains we consider include thin
shells of superconducting material, similar to those typically used in
practice. Note, however, that our analysis of the behavior of the solution as
$\lambda_L\to 0^+$ requires that the distance between components of $\pa\Omega$
is a fixed positive number.
 
 The rest of the paper is organized as follows.  In~\Cref{sec:math-prelim}, we
 set the notation, provide some mathematical preliminaries, and review
 elementary properties of layer potentials involved in analyzing the limiting
 behavior of the London equations.  In~\Cref{sec:london}, we review the
 scattering problem for the London equations.  In~\Cref{sec:connected}, we
 derive a limiting form for the London equations when the boundary of the
 superconducting region has one connected component, and in~\Cref{sec3} we
 derive the analogous results for the case of thin shells where the boundary has
 two connected components. In the process, we also provide a different existence
 proof for the solution of the London equations in these settings.
 In~\Cref{sec:numerics}, we estimate the rate of convergence to the limiting
 solution of our approximate solutions to the London equations, and demonstrate
 the efficacy of using the limiting equations for solving the problem via a
 battery of numerical examples.  Finally, we provide some concluding thoughts,
 describe plans for future research, and outline some open questions
 in~\Cref{sec:conclusions}.

\begin{remark}
 This article makes extensive usage of the Hodge decomposition on manifolds with
 boundary. In the context of several complex variables, Joe Kohn made very
 important contribution to Hodge theory, starting with his very early
 papers,~\cite{Ko1,Ko2,KoRo}. Of particular import to the first author (CLE) was
 Kohn's paper on the closed range property for $\overline{\pa}_b$ on an
 embeddable CR-manifold,~\cite{Ko3}, where extensive usage is made of Hodge
 theory.
 \end{remark}

\section{Mathematical preliminaries}
\label{sec:math-prelim}

Suppose that $\Omega$ is a bounded region, with a smooth connected  boundary $\pa \Omega$.
Let $n(x)$ denote the outward pointing normal at $x \in \pa \Omega$.  Let $H(x)$
denote the mean curvature for $x \in \pa \Omega$ relative to this choice of
normal vector field.  Let $\pi(x)$ denote the nearest point map given by
\begin{equation}
   \pi(x)=\arg\min_{p\in\pa\Omega}\dist(x,p) \, .
 \end{equation}
 Since the surface $\pa\Omega$ is smooth and embedded, there is a
 neighborhood $U\supset\pa\Omega$ in which the nearest point map
 is single valued and smooth.  The signed distance function $\sdrx$ given by
 \begin{equation}
 \label{eq:sign-dist}
   \sdrx=\begin{cases} &-\dist(x,\pi(x))\text{ for }x\in\Omega,\\
   &\phantom{-}\dist(x,\pi(x))\text{ for }x\in\Omega^c,
   \end{cases}
 \end{equation}
 is also smooth in $U.$  
For $p\in\pa\Omega,$ the set $\pi^{-1}(p)\cap U$ is
 a straight line meeting $\pa\Omega$ orthogonally at $p.$ 
Let
 $N(x)$ denote the outward pointing, unit vector tangent to the line
 $\pi^{-1}(\pi(x)),$ for $x \in U$. It follows that $n(x) = N(x)$ for $x \in \pa \Omega$.

As noted, throughout this paper we use the description of electromagnetic fields
in terms of differential forms: electric currents and fields are 1-forms, and
magnetic field are 2-forms.  The analysis of this problem uses many standard
facts from Hodge theory on manifolds with boundary, which can be found
in~\cite{Taylor2}[Chapter 5]. The notation $\star$ refers to the Hodge-star
operator defined on $\bbR^3$ by the standard volume form, $dV,$ and $\star_2$
refers to the Hodge star-operator on an oriented surface with outer unit normal
$n;$ the induced orientation is defined by $dS_{\pa \Omega} = i_{n}dV.$ We also
use $\pa_{n}$ to denote the action, along $\pa\Omega,$ of the unit vector field
$N,$
   \begin{equation}
     \pa_n u= N u\restrictedto_{\pa\Omega}.
   \end{equation}
   Away from $\pa\Omega,$ the action  of $N$ is sometimes denoted by $\pa_{r}.$

We make extensive usage of the $L^2$-Sobolev spaces $H^s(X)$, and $H^s_{0}(X)$
for $s\in\bbR ,$ with $X$ being typically being $\Omega$ or $\pa \Omega$,
see~\cite{Taylor1}[Chapter 4]. These are, in fact, usually Sobolev spaces of
sections of vector bundles, e.g. $\Lambda^k(X)\to X,$ but to keep the notation
simple we leave this out of our notation.  For any two Banach spaces $\cX,\cY$,
and a bounded linear operator $Q:\cX\to\cY,$ let
 \begin{equation}
   \|Q\|_{\cX\to \cY}=\sup_{x\neq 0}\frac{\|Qx\|_{\cY}}{\|x\|_{\cX}} \, ,
 \end{equation}
denote the norm. Here, and in the sequel, $\|\cdot\|_{\cX}$ denotes the norm on
$\cX,$ etc.

We use the notation $\langle u,v\rangle_X$ for the $L^2$-inner product of
functions defined on $X.$ We also use this notation for its natural extension as
the Hermitian duality pairing $H^s(X)\times [H^{s}]'(X)\to\bbC.$ If $M$ is a
compact manifold \emph{without} boundary, then there are natural pairings $H^s(M)\times
H^{-s}(M)\to\bbC.$  For $X$ a manifold with boundary  we recall
that, for $-\frac 12 <s<\frac 12,$ we have the natural isomorphism
$H^s_0(X)\simeq H^s(X).$ We use $\| \cdot \|$ to either denote operator norms or
norms of sections over surfaces or regions, and $| \cdot |$ to denote the
point-wise norm of forms, or points in Euclidean space.

We use the following trace theorem, estimating the norm of the restriction of a
closed 2-form to the boundary in terms of its norm in the volume in appropriate
$H^{s}$ spaces.
\begin{restatable}{lemma}{tracelemma}
\label{tr_lem} Let $\Omega\subset\bbR^d$ be an open set with $\pa\Omega$ a smooth submanifold.
Let $s\geq 0,$ if $\theta\in H^s(\Omega;\Lambda^2)$ is closed, that is
$d\theta=0,$ in the sense of distributions, then
$\theta\restrictedto_{\pa\Omega}$ is well defined as an element of $H^{s-\frac
  12}(\pa\Omega;\Lambda^2),$ and there is a constant $C_s$ so that
   \begin{equation}
     \|\theta\restrictedto_{\pa\Omega}\|_{H^{s-\frac 12}(\pa\Omega)}\leq
     C_s \|\theta\|_{H^s(\Omega)}.
   \end{equation}
 \end{restatable}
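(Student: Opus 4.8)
The plan is to exploit the fact that a closed $2$-form on $\Omega \subset \bbR^d$ has a distinguished component — the "normal" component relative to $\pa\Omega$ — whose restriction to the boundary is controlled by the \emph{full} form together with its exterior derivative, which here vanishes. More precisely, near $\pa\Omega$ use the collar coordinates supplied by the signed distance function $\sdr$ and the nearest point map $\pi$: write $\theta = dr \wedge \alpha(r,\cdot) + \beta(r,\cdot)$, where for each fixed small $r$ the forms $\alpha(r,\cdot)$ and $\beta(r,\cdot)$ are pulled back from the level set $\{\sdr = r\}$ (a $1$-form and a $2$-form on a surface, respectively). The tangential part $\beta$ is the one that can be problematic; the point of closedness is that $d\theta = 0$ forces $\pa_r \beta = d_{\pa\Omega}\alpha$ (plus lower-order curvature terms coming from the non-product nature of the collar), so $\beta$ is, up to harmless terms, an antiderivative in $r$ of an object that is one derivative worse than $\theta$ tangentially but of the same order overall.

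First I would reduce to a neighborhood of $\pa\Omega$: fix a cutoff $\chi$ supported in the collar $U$ and equal to $1$ near $\pa\Omega$; then $\chi\theta$ agrees with $\theta$ on the boundary and $\|\chi\theta\|_{H^s} \le C\|\theta\|_{H^s}$, while $d(\chi\theta) = d\chi \wedge \theta$ is an $H^{s}$ form with no loss (no derivative of $\theta$ hits a bad place since $d\chi$ is smooth). So it suffices to prove the estimate for forms supported in the collar, which via $\pi$ and $\sdr$ we identify with forms on $\pa\Omega \times (-\delta, 0]$. Next, decompose $\chi\theta = dr\wedge\alpha + \beta$ as above; the map $\theta \mapsto (\alpha,\beta)$ is bounded $H^s(\Omega;\Lambda^2) \to H^s\big((-\delta,0]; H^s(\pa\Omega;\Lambda^1)\big) \oplus H^s\big((-\delta,0]; H^s(\pa\Omega;\Lambda^2)\big)$, and conversely. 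The restriction $\theta\restrictedto_{\pa\Omega}$ is (essentially) $\star_2$-dual to, or simply equal to, $\beta(0,\cdot)$ — the tangential $2$-form part evaluated at $r = 0$ — so the whole problem is to show $\beta(0,\cdot) \in H^{s-\frac12}(\pa\Omega)$ with the claimed bound.

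For that, I would use the closedness equation. Computing $d\theta$ in the collar coordinates and setting it to zero yields, schematically, $\pa_r\beta = d_{\pa\Omega}\alpha + R(r)\alpha + S(r)\beta$, where $R, S$ are smooth families of zeroth-order operators (bundle endomorphisms built from the second fundamental form and mean curvature $H$ of the level sets). Thus $\beta(0,\cdot) = \beta(r,\cdot) - \int_r^0 \big(d_{\pa\Omega}\alpha + R\alpha + S\beta\big)\,dr'$. Pair this identity with a test $1$-form $\phi \in H^{-(s-\frac12)}(\pa\Omega;\Lambda^1)$ (or the appropriate dual object), integrate the resulting expression in $r$ against a smooth bump, and move the $d_{\pa\Omega}$ onto $\phi$ by integration by parts on $\pa\Omega$; this trades the tangential derivative of $\alpha$ for a tangential derivative of $\phi$. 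The terms not involving $d_{\pa\Omega}$ are handled by the standard one-dimensional trace/averaging inequality $\|f(0)\|_{H^{s-\frac12}} \le C\|f\|_{H^s((-\delta,0); H^{s-\frac12})}^{1/2}\|f\|_{\cdots} \le C\|f\|_{H^s}$ applied coordinatewise, together with $H^s(\pa\Omega) \hookrightarrow H^{s-\frac12}(\pa\Omega)$. For the $d_{\pa\Omega}\alpha$ term one instead uses that $\alpha \in H^s$ in $r$ with values in $H^s(\pa\Omega)$, so $d_{\pa\Omega}\alpha \in H^s(\text{in }r; H^{s-1}(\pa\Omega))$, and the $r$-integration (which is a smoothing in the $r$ variable but we only need boundedness) followed by the duality pairing lands $\beta(0,\cdot)$ in $H^{s-\frac12}(\pa\Omega)$ — the $\frac12$ loss being exactly the trace loss in the $r$ direction, and the tangential derivative count balancing because $d_{\pa\Omega}$ costs one tangential derivative while the $\frac12$-gain from the $r$-trace is spent, leaving a net that the paper absorbs into the statement's $s - \frac12$. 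Collecting the pieces gives $\|\beta(0,\cdot)\|_{H^{s-\frac12}(\pa\Omega)} \le C_s\|\theta\|_{H^s(\Omega)}$, which is the assertion.

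The main obstacle I anticipate is keeping the bookkeeping of tangential versus normal derivatives honest: a generic $H^s(\Omega)$ $2$-form has its restriction only in $H^{s-\frac12}$, but \emph{a priori} the tangential $2$-form component of an $H^s$ form restricts to $H^{s-\frac12}$ anyway, so one must see precisely where closedness is used — namely it is used to give meaning to $\theta\restrictedto_{\pa\Omega}$ when $s$ is small (the naive trace requires $s > \frac12$, but the identity $\beta(0,\cdot) = \beta(r,\cdot) - \int_r^0(\cdots)$ makes sense distributionally for all $s \ge 0$ because the right-hand side, after pairing with a test form and integrating by parts tangentially, involves only honest $L^2_{\loc}$-in-$r$ quantities). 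So the careful point is to \emph{define} the trace via this weak identity and then verify it is independent of the auxiliary bump function and agrees with the classical trace when $s > \frac12$; the estimate itself is then a routine consequence of the one-dimensional trace inequality and integration by parts on $\pa\Omega$.
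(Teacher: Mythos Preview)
Your collar--coordinate approach and the transport identity $\partial_r\beta = d_{\partial\Omega}\alpha + (\text{curvature terms})$ are a reasonable starting point, but there is a genuine gap in the derivative count for the $d_{\partial\Omega}\alpha$ contribution. After pairing with a test function $\phi$ on $\partial\Omega$, integrating against a \emph{fixed} bump $\psi(r)$, and moving $d_{\partial\Omega}$ onto $\phi$, that term is bounded by $C\|\alpha\|_{L^2(\text{collar})}\|d_{\partial\Omega}\phi\|_{L^2(\partial\Omega)}$, which controls $\langle\beta(0),\phi\rangle$ only by $\|\phi\|_{H^1(\partial\Omega)}$, not by $\|\phi\|_{H^{1/2}(\partial\Omega)}$ as needed to place $\beta(0)$ in $H^{-1/2}$. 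Integrating in $r$ against a fixed bump gains nothing in the tangential direction, so the phrase ``the $\tfrac12$-gain from the $r$-trace is spent'' is not correct---there is no such gain. Relatedly, the claim that ``$\alpha \in H^s$ in $r$ with values in $H^s(\partial\Omega)$'' is false: $\theta\in H^s(\Omega)$ only gives $\alpha \in H^\sigma(r; H^{s-\sigma}(\partial\Omega))$ for $0\le\sigma\le s$, which is half as much regularity as you assert. The argument can be repaired by scaling the bump in $r$ with the tangential frequency (a Littlewood--Paley device), but that is not what you wrote.

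The paper's proof avoids the collar decomposition entirely and is considerably shorter. For $s=0$ it pairs $\theta\restrictedto_{\partial\Omega}$ with a test function $\varphi$ on $\partial\Omega$, extends $\varphi$ \emph{harmonically} to $\Phi=\PI(\varphi)$ in $\Omega$, and applies Stokes together with $d\theta=0$ to get $\int_{\partial\Omega}\varphi\,\theta=\int_\Omega d\Phi\wedge\theta$; the Poisson bound $\|\PI(\varphi)\|_{H^1(\Omega)}\le C\|\varphi\|_{H^{1/2}(\partial\Omega)}$ then yields $\|\theta\restrictedto_{\partial\Omega}\|_{H^{-1/2}}\le C\|\theta\|_{L^2}$ in one line. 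The range $0<s\le 1$ follows by interpolation with the classical trace at $s=1$. The essential difference from your attempt is the choice of extension of the test function: the harmonic extension supplies precisely the $H^{1/2}(\partial\Omega)\to H^1(\Omega)$ gain that the constant-in-$r$ extension implicit in your bump does not.
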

The result is well known. For $s>\frac 12$ it is a consequence of the standard
trace theorem.  Since it is so central to our analysis we give a complete proof
in~\Cref{app_tr_lem}

\subsection{Layer potentials}\label{subsec:lp}
In this section we review some elementary properties of layer potentials for the
Laplace equation. These results are classical and can be found
in~\cite{ColtonKress} for example.  Let $k(x,y)$ be a kernel function defined in
$\bbR^3\times \bbR^3,$ which is typically singular on the diagonal and smooth
otherwise. This kernel defines layer potential over the boundary $\pa \Omega$,
by setting
\begin{equation}
  \cK f(x)=\int_{\pa\Omega}k(x,y)f(y)dS_y\text{ for }f\in \cC^{\infty}(\pa\Omega),
\end{equation}
with $x\notin\pa\Omega.$ 
We then let $Kf(x)$ denote the appropriate limit of $\cK f(x)$ as the point of
evaluation approaches $\pa\Omega.$  Depending on the nature of the singularity in the kernel of $\cK,$ the operator
$f\mapsto Kf$ may denote the restriction $\cK f,$ its principal value, or its
finite part. Typically, this limit extends to define a bounded map
$K: H^s(\pa \Omega) \to H^t(\pa \Omega),$ for appropriate $s,t.$

Recall, that the Green's function for the Laplace equation is given by
\begin{equation}
g(x,y) = \frac{1}{4\pi |x-y|} \, .
\end{equation}
For $x \in \bbR^{3} \setminus \pa \Omega$, let 
$\cS [\sigma]$, $\cD[\sigma]$ denote the Laplace single and double layer
potentials given by integrating over $\pa\Omega:$
\begin{equation}
\cS[\sigma](x) = \int_{\pa \Omega} g(x,y) \sigma(y) \, dS  \, , \quad  
\cD[\sigma](x) = \int_{\pa \Omega} \left( n(y) \cdot \nabla_{y} g(x,y)  \right)\sigma(y) \, dS \, \, .
\end{equation}
When necessary to denote which boundary, $W,$ the layer potential density is
supported on we use a subscript $\cS_{W}[\sigma],$ $\cD_{W}[\sigma],$ etc.  Let
$S[\sigma]$ and $D[\sigma]$ denote the restrictions of $\cS[\sigma],
\cD[\sigma],$ to $x \in \pa \Omega,$ which are given by
\begin{equation}
S[\sigma](x) = \int_{\pa \Omega} g(x,y) \sigma(y) \, dS  \, , \quad  
D[\sigma](x) = {\rm p.v.}\int_{\pa \Omega} \left( n(y) \cdot \nabla_{y} g(x,y)  \right)\sigma(y) \, dS \, \, .
\end{equation}
These are pseudodifferential operators of order $-1.$ The single layer operator
$S:L^2(\pa\Omega)\to L^2(\pa\Omega)$ is a positive, self adjoint operator, that
is $\langle S\varphi,\varphi\rangle_{\pa\Omega}>0$ for any $\varphi\neq 0$ in
$L^2(\pa\Omega).$

Let $S'[\sigma]$ and $D'[\sigma]$ denote the normal
derivatives of $\cS[\sigma]$ and $\cD[\sigma]$ restricted to $x \in \pa \Omega,$
they are given by
\begin{equation}
\begin{aligned}
S'[\sigma](x) &={\rm p.v.} \left(n(x) \cdot \nabla_{x} \left( \int_{\Gamma} g(x,y) \sigma(y) \, dS  \right)  \right)  \, , \\
D'[\sigma](x) &= {\rm f.p.}\left( n(x) \cdot \nabla_{x} \left( \int_{\Gamma} \left( n(y) \cdot \nabla_{y} g(x,y)  \right)\sigma(y) \, dS  \right) \right) \, .
\end{aligned}
\end{equation}
The abbreviation ``p.v.'' denotes a principal value integral and the
abbreviation ``f.p.'' denotes the finite part of the integral. $S'$ is an
operator of order $-1,$ and $D'$ is an operator of order $1.$
The layer potentials satisfy the following jump relations and Calderon identities, see~\cite{ColtonKress,Nedelec}, for example.
Suppose that $x_{0} \in \pa \Omega$, then 
\begin{equation}
\label{eq:identites}
\begin{aligned}
\lim_{\substack{x\to x_{0} \\ x \in \Omega^c}} \cS[\sigma] &= S[\sigma](x_{0})\,, \\
\lim_{\substack{x\to x_{0} \\ x \in \Omega}} \cS[\sigma] &= S[\sigma](x_{0})\,, \\
\lim_{\substack{x \to  x_{0} \\ x \in \Omega^c}} \nabla_{x} \cS [\sigma] &= \frac{\sigma(x_{0}) n(x_{0})}{2} + {\rm p.v.} \nabla_{x} S[\sigma](x_{0}) \, ,\\
\lim_{\substack{x \to  x_{0} \\ x \in \Omega}} \nabla_{x} \cS [\sigma] &= -\frac{\sigma(x_{0}) n(x_{0})}{2} + {\rm p.v.} \nabla_{x} S[\sigma](x_{0}) \, ,\\
DS[\sigma](x_{0}) &= SS'[\sigma](x_{0}) \, .
\end{aligned}
\end{equation}

\section{The London equations} \label{sec:london}
In this section, we review the standard scattering problem for the London equations describing the fields inside the superconducting material
and the magnetic field in the exterior of the superconductor. This problem is discussed in detail in~\cite{EpRa1}, and we reproduce here a brief
summary for completeness.
Suppose that the superconducting material is
homogeneous and isotropic, occupying a bounded region $\Omega.$ We assume that $\pa\Omega$ is smooth surface embedded in $\bbR^3$.

Within $\Omega$ there is a magnetic
field, which we describe as a 2-form $\bEta^0,$
\begin{equation}\label{eqn1.001}
  \bEta^0=\eta^0_1dx_2\wedge dx_3+\eta^0_2dx_3\wedge dx_1+\eta^0_3dx_1\wedge dx_2,
\end{equation}
and a current, which we describe
as a 1-form $\bj^0,$
\begin{equation}
  \bj^0=j^0_1dx_1+j^0_2dx_2+j^0_3dx_3.
\end{equation}
According to the London equations these satisfy the first
order system:
\begin{equation}\label{eqn1}
  d^*\bEta^0=\bj^0,\text{ and } d\bj^0=-\frac{1}{\lambda_L^2}\bEta^0\text{ within }\Omega.
\end{equation}
Recall that,  on $p$-forms in $\bbR^3,$ the formal adjoint of $d$ is
$d^*=(-1)^p\star d\star,$ where $\star$ is the Hodge-star operator defined by
the Euclidean metric. These equations imply that
\begin{equation}\label{eqn2}
  d\bEta^0=0\text{ and }d^*\bj^0=0.
\end{equation}
The constant $\lambda_L$ is called the London penetration depth.  It measures
the thickness of the current carrying portion of a superconducting material. 
In the complementary region there is a static magnetic field $\bEta^{\tot}$ (also described as a 2-form),  which
satisfies the usual  equations of magneto-statics
\begin{equation}\label{eqn3}
  d\bEta^{\tot}=0,\text{ and } d^*\bEta^{\tot}=0\text{ in }\Omega^c.
\end{equation}

The current $\bj^0(x)$ is supported in $\Omega$ and decays exponentially in $\dist(x,\pa\Omega).$ As
there is no current sheet on the boundary, the physically
reasonable boundary conditions are
\begin{equation}
  \bEta^0\restrictedto_{\pa\Omega}= \bEta^{\tot}\restrictedto_{\pa\Omega}\text{ and }
  \star\bEta^0\restrictedto_{\pa\Omega}= \star\bEta^{\tot}\restrictedto_{\pa\Omega},
\end{equation}
i.e, the normal \emph{and} tangential components of the magnetic field are
continuous across the boundary.
 This boundary condition and $d\star\bEta^{\tot}=0$  imply that
\begin{equation}
  d_{\pa\Omega}[\star\bEta^0\restrictedto_{\pa\Omega}]=0,
\end{equation}
that is $\star\bEta^0\restrictedto_{\pa\Omega}$ is a closed 1-form on
$\pa\Omega.$ The London equation shows that $i_{n}\bj^0=0$ is
equivalent to
$d_{\pa\Omega}[\star\bEta^0\restrictedto_{\pa\Omega}]=0,$ hence the
current is tangent to $\pa\Omega.$  

In the standard ``scattering'' problem for this set-up the magnetic field in
$\Omega^c$ is split into an incoming magneto-static field $\bEta^{\In},$ and an
outgoing scattered field, $\bEta^+.$ The field $\bEta^+$ is defined in all of
$\Omega^c;$ the assumption that $\bEta^+$ is an outgoing field means that
\begin{equation}
  |\bEta^+(x)|=o(|x|^{-1}).
\end{equation}
Suppose that the incoming field is a solution to Maxwell's equations generated by
sources that are a positive distance from $\overline{\Omega}.$ 
The boundary conditions can therefore be written as:
\begin{equation}\label{eq:bc1}
  \bEta^0\restrictedto_{\pa\Omega}-\bEta^+\restrictedto_{\pa\Omega}= \bEta^{\In}\restrictedto_{\pa\Omega}\text{ and }
  \star\bEta^0\restrictedto_{\pa\Omega}-\star\bEta^+\restrictedto_{\pa\Omega}
  = \star\bEta^{\In}\restrictedto_{\pa\Omega}.
\end{equation}

If the domain $\Omega$ is non-contractible, then the continuity of the 
magnetic field across the boundary is not sufficient to imply uniqueness of
solutions to the London equations. There are non-trivial solutions with $\bEta^{\In}=0.$
Suppose $\pa \Omega$ has 1 connected component and genus $g$, 
then there are $g$ A-cycles $\{A_1,\dots,A_g\},$ which bound surfaces
$\{S_{A_1},\dots,S_{A_g}\}$ contained within $\Omega,$ and $g$
B-cycles $\{B_1,\dots,B_g\},$ which bound surfaces
$\{S_{B_1},\dots,S_{B_g}\}$ contained in $\Omega^c,$  see Figure~\ref{fig2.1}. Since
$[\star\bEta^{\tot}]\restrictedto_{\pa\Omega}
=[\star\bEta^0]\restrictedto_{\pa\Omega}$ are closed 1-forms, their
integrals over a cycle in $\pa\Omega$ depend only on the homology
class of the cycle. It follows from the boundary condition, and Stokes
theorem that
 \begin{equation}
   \int\limits_{B_j}\star\bEta^0= \int\limits_{B_j}\star\bEta^{\tot}=
   \int\limits_{S_{B_j}}d\star[\bEta^{\In}+\bEta^+]=0,
 \end{equation}
 provide that $\bEta^{\In}$ is defined in a contractible neighborhood of
 $\Omega.$ If not, then
  \begin{equation}\label{eq:bc-top1}
  b_j= \int\limits_{B_j}\star\bEta^0= \int\limits_{B_j}\star\bEta^{\tot}=
   \int\limits_{B_j}\star\bEta^{\In},
  \end{equation}
  which may not be zero, but is determined by $\bEta^{\In}.$
\begin{figure}[h]
  \centering
  {\includegraphics[width=.45\linewidth]{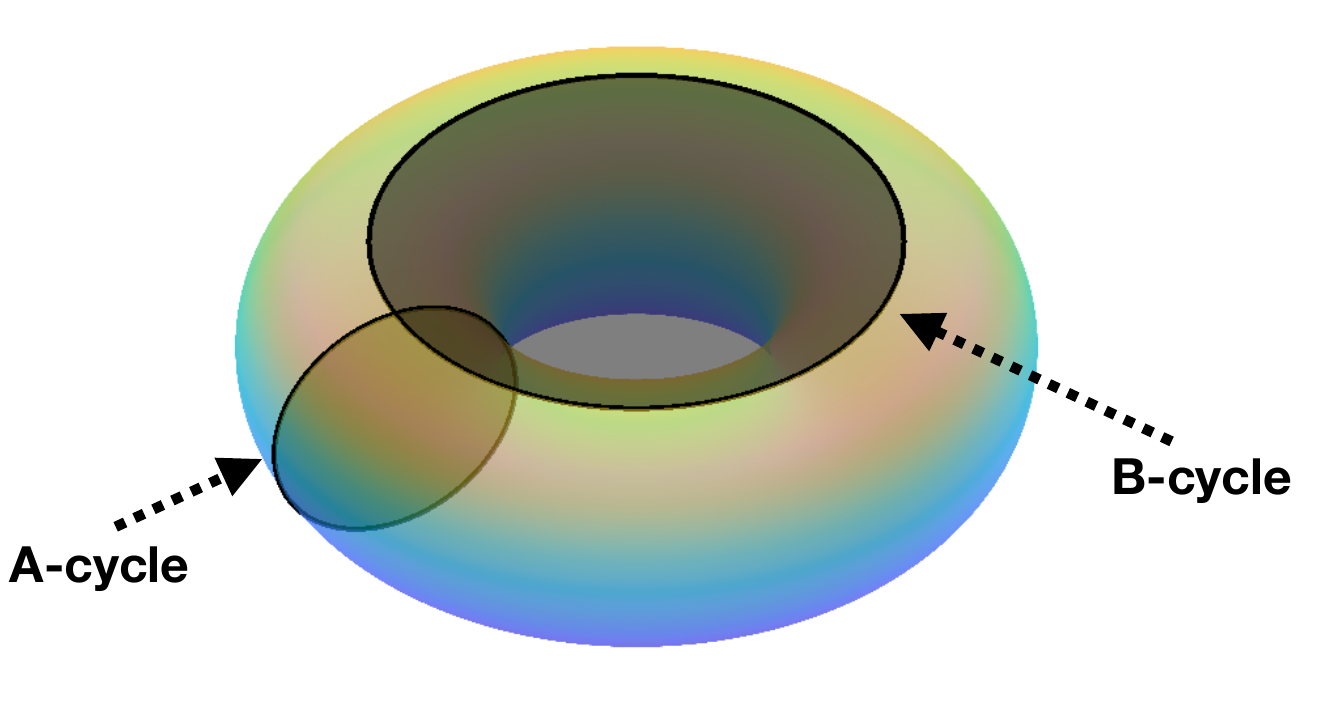}} 
\caption{Torus showing an $A$-cycle and a $B$-cycle, as
  well as the spanning surfaces.}
\label{fig2.1}
\end{figure}

 On the other hand, the periods
 \begin{equation}\label{eq:bc-top2}
   a_j = \int\limits_{A_j}\star\bEta^{\tot}
 =\int\limits_{A_j}\star\bEta^0= \int\limits_{S_{A_j}}d\star \bEta^0=
   \int\limits_{S_{A_j}}\star\bj^0,\text{ for }j=1,\dots, g,
 \end{equation}
 are not determined a priori, and in fact, constitute additional data
 that must be specified to get a unique solution. As $\bEta^{\In}$ is
 defined in a neighborhood of $\overline{\Omega},$ and $\pa
 S_{A_j}=A_j,$ Stokes theorem shows that
 \begin{equation}
   \int\limits_{A_j}\star\bEta^{\tot}=\int\limits_{A_j}\star\bEta^{+}.
 \end{equation}

 In our earlier paper,~\cite{EpRa1}, assuming that the
 $b_j=0,\,j=1,\dots,g,$ we proved, among other things, that this
 problem has a unique solution, $(\bEta^+,\bEta^0,\bj^0),$ for any
 $\lL\in(0,\infty),$ for any incoming field $\bEta^{\In},$ defined in a
 contractible neighborhood of $\Omega,$ and periods $\{a_j\}.$ 

\section{The Limit $\lL\to 0^+$ with $\pa \Omega$ Connected} \label{sec:connected}
The analysis of the $\lL\to 0^+$-limit proceeds by finding a system of equations
for the magnetic fields alone, with no mention of the current in the
superconducting region.  The rest of the section is dedicated to deriving these
results when $\pa \Omega$ is a connected set. This is somewhat simpler to
discuss and sets the stage for the more involved analysis needed when $\pa
\Omega$ has $2$ components.

\begin{remark}
  In this and the following section $C$ refers to a
variety of positive constants that do not depend on either $\lL$ or the data.
\end{remark}

Let $(\bEta^+_{\lL},\bEta^0_{\lL},\bj^{0}_{\lL})$ denote the solution,
 for a given value of $\lL,$ with fixed periods,
 $\{a_1,\dots,a_g\},$ and incoming field, $\bEta^{\In}.$ Combining the equations in $\Omega$, we see
 that $\bEta^0_{\lL}$ satisfies
 \begin{equation}\label{eqn2.1.2}
   \left[dd^*+\frac{1}{\lL^2}\right]\bEta^0_{\lL}=0.
 \end{equation}
 This equation implies that $d\bEta^0_{\lL}=0.$ Furthermore, if $\bEta^0_{\lL}$ solves this
 equation, and we set $\bj^0=d^*\bEta^0_{\lL},$ then the pair
 $(\bEta^0_{\lL},\bj^0_{\lL})$ solves~\eqref{eqn1}. The boundary conditions are as
 before given by~\cref{eq:bc1}, along with the topological constraints~\cref{eq:bc-top1,eq:bc-top2}. 
 
 In the limit, we expect the interior magnetic field to converge to zero, and the current in the superconducting
 region to converge to a current sheet. Thus, in the limit, while the normal components of the field
 remain continuous, the tangential components of the 
 total magnetic field would no longer be continuous. This implies that the limiting exterior field, denoted
 by $\bEta^{\out +}$ should be the outgoing solution to
 \begin{equation}\label{eqn4.2.100}
d \bEta^{\out +} = 0 , \text{ and } d^{*} \bEta^{\out +} = 0 ,
 \end{equation}
 in $\Omega^{c}$, satisfying the boundary conditions
 \begin{equation}
 \bEta^{\out +}\restrictedto_{\pa \Omega} = -\bEta^{\In}\restrictedto_{\pa \Omega} \, ,
 \end{equation}
 and the topological constraints
 \begin{equation}\label{eqn4.4.100}
 \int_{A_{j}} \star\bEta^{\out +} = a_{j} \text{ for }j=1,\dots, g.
 \end{equation}
 If $\bEta^{\In}$ is smooth, then it is well-known that there exists a unique
 outgoing solution to~\eqref{eqn4.2.100}--\eqref{eqn4.4.100}, which is in
 $H^{s}_{\loc}(\Omega^c)$ for any $s \in \bbR$.
 
Our analysis proceeds by observing that $\bEta_{\lL}^{+}$ equals $\bEta^{\out
  +}$ up to a harmonic correction, i.e. $\bEta_{\lL}^{+} = \bEta^{\out +} +
\star d u_{\lL}$ with $u_{\lL}$ being a harmonic function. The interior fields
are split into three parts, the first part denoted by $\beta_{\lL}$ is a closed
2-form which shares the same tangential data as $\bEta^{\out +} + \bEta^{\In}$,
the second part is the harmonic extension of $u_{\lL}$ to $\Omega$ denoted by $v_{\lL};$
the third part is a correction term denoted by $\bEta^{00}_{\lL}$. This term must also
be closed and have $0$ tangential data, since the tangential components of the
total fields are already continuous. The continuity of the normal components of
the magnetic fields relates the jump in the normal derivatives of $u_{\lL}$ and
$v_{\lL}$ to $(\beta_{\lL} + \bEta^{00}_{\lL})\restrictedto_{\pa
  \Omega}$.

To summarize, our ansatz states that
  \begin{equation}\label{top_ansatz}
   \begin{split}
     \bEta^{+}_{\lL}&=\bEta^{\out +}+\star d u_{\lL}\text{ in }\Omega^c\\
     \bEta^{0}_{\lL}&=\bEta^{00}_{\lL}+\beta_{\lL}+\star d v_{\lL}\text{ in }\Omega,
   \end{split}
 \end{equation}
 where $d\beta_{\lL} = 0$ in $\Omega$ and $\star \beta_{\lL}\restrictedto_{\pa
   \Omega} = \star (\bEta^{\out +} + \bEta^{\In})\restrictedto_{\pa \Omega}$,
 $d\bEta^{00}_{\lL} = 0$ in $\Omega$ and
 $\star\bEta^{00}_{\lL}\restrictedto_{\pa \Omega} = 0$, the functions $u_{\lL}$ and
 $v_{\lL}$ are harmonic and satisfy  transmission boundary conditions given below in~\eqref{eqn4.6.100}.

\subsection{Construction of $\beta_{\lL}$}\label{sec:beta}
First, we turn our attention to the construction of $\beta_{\lL}$ which is selected to have the following basic properties
 \begin{equation}
   d\beta_{\lL}=0,\quad \star\beta_{\lL}\restrictedto_{\pa\Omega}= \star(\bEta^{\out+} + \bEta^{\In})\restrictedto_{\pa\Omega}.
 \end{equation}
It is easy to construct many such fields, however we require one for which we
have control on the size of $\|(dd^*+\lL^{-2})\beta_{\lL}\|_{L^2(\Omega)}$ as
$\lL\to 0^+.$ In the following lemma, we provide an explicit construction of
$\beta_{\lL}$ that achieves this goal.  Before stating and proving the result,
recall that $U\subset \bbR^{3}$ is a neighborhood of $\pa \Omega$ where the
nearest point map, $\pi(x),$ and distance function, $\rho(x),$ are smooth.  Let $r_0$ denote a positive number so that
 \begin{equation}
   V_{r_0}=\{x:\:-2r_0\leq \rho(x)\leq 0\}\subset U.
 \end{equation}

 \begin{lemma}\label{lem1}
   Let $\gamma_0$ be a smooth 1-form defined on $\pa\Omega,$ and 
   $\psi\in\cC^{\infty}_c(-2r_0,2r_0),$ with $\psi(r)=1$ for $|r|\leq r_0.$ 
   We let $\gamma=\pi^*(\gamma_0),$ and define the 2-form in $\overline{\Omega}$
   \begin{equation}\label{eqn4.8.101}
     \beta_{\lL}= d (E_{\lL}(\rho(x)) \wedge \gamma) =
     e^{\frac{\rho(x)}{\lambda_L}}\psi(\rho(x))d\rho\wedge\gamma+E_{\lL}(\rho(x))d\gamma,
   \end{equation}
   where
   \begin{equation}
     E_{\lL}(r)=\int^{r}_{-\infty}e^{\frac{s}{\lL}}\psi(s)ds.
   \end{equation}
   This 2-form is compactly supported in the set $V_{r_0}$ and can
   therefore be smoothly extended by 0 to be defined in all of
   $\overline{\Omega}.$ It satisfies the conditions
   \begin{equation}
      \star\beta_{\lL}\restrictedto_{\pa\Omega}=\star_2\gamma_0,\text{
        and }    d\beta_{\lL}=0,
   \end{equation}
   as well as the  estimates
   \begin{equation}\label{eqn2.15.2}
     \begin{split}
     \|\beta_{\lL}\|_{L^2(\Omega)}\leq C\sqrt{\lL},\quad &
     \|[dd^*+\lL^{-2}]\beta_{\lL}\|_{L^2(\Omega)}\leq \frac{C}{\sqrt{\lL}},\\
       \|\beta_{\lL}\restrictedto_{\pa\Omega}&\|_{H^s(\pa\Omega)}\leq
       C_s\lL,\text{ for any }s\in\bbR.
       \end{split}
   \end{equation}
 \end{lemma}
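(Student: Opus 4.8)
The plan is to work in the collar coordinates $(\rho,p)\in(-2r_0,0]\times\pa\Omega$ provided by the nearest-point map, in which $d\rho$ is a unit covector and the Euclidean volume form is $dV = J(\rho,p)\,d\rho\wedge dS_{\pa\Omega}$ for a smooth, positive Jacobian $J$ with $J(0,\cdot)=1$. First I would verify the two stated identities. Since $\beta_{\lL}=d(E_{\lL}(\rho)\wedge\gamma)$ is exact, $d\beta_{\lL}=0$ is immediate. For the boundary restriction, on $\pa\Omega$ the first term $e^{\rho/\lL}\psi(\rho)\,d\rho\w\gamma$ is a multiple of $d\rho\w\pi^*\gamma_0$, which pulls back to a multiple of the normal covector wedged with a tangential form; applying $\star$ and restricting to $\pa\Omega$ converts $d\rho\w\gamma\restrictedto_{\pa\Omega}$ into $\star_2\gamma_0$ (using $E_{\lL}'(\rho)=e^{\rho/\lL}\psi(\rho)$ and $e^{0}\psi(0)=1$), while the second term $E_{\lL}(\rho)\,d\gamma$ contains no $d\rho$ factor and its Hodge dual restricts to something purely normal, i.e. it does not contribute to $\star\beta_{\lL}\restrictedto_{\pa\Omega}$; more carefully, $E_{\lL}(0)=\int_{-\infty}^0 e^{s/\lL}\psi(s)\,ds = O(\lL)$, so even its contribution is negligible. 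Compact support in $V_{r_0}$ follows because $\psi$ and hence $E_{\lL}'$ vanish for $|\rho|\ge 2r_0$, and $E_{\lL}(\rho)=0$ for $\rho\le -2r_0$.

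Next come the quantitative estimates, which all rest on the elementary bounds $0\le E_{\lL}(\rho)\le \lL e^{\rho/\lL}$ and $|E_{\lL}'(\rho)|=e^{\rho/\lL}\psi(\rho)\le e^{\rho/\lL}$ for $\rho\le 0$, together with $\int_{-\infty}^{0} e^{2\rho/\lL}\,d\rho = \lL/2$. For $\|\beta_{\lL}\|_{L^2(\Omega)}$: the dominant term is $E_{\lL}'(\rho)\,d\rho\w\gamma$, whose pointwise norm is $\le C e^{\rho/\lL}$ (with $C$ depending on $\|\gamma_0\|_{C^0}$ and the geometry through $\pi^*$ and $J$); squaring and integrating $d\rho$ over $(-2r_0,0)$ and $dS$ over $\pa\Omega$ gives $O(\lL)$, hence $\|\beta_{\lL}\|_{L^2}\le C\sqrt\lL$. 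The term $E_{\lL}(\rho)\,d\gamma$ is even smaller, of size $O(\lL^{3/2})$ in $L^2$. For the boundary restriction in $H^s$: along $\pa\Omega$ one has $\beta_{\lL}\restrictedto_{\pa\Omega}$ built from $e^{0}\psi(0)\,\text{(tangential part of }d\rho\w\gamma)$ — but that part is exactly $\star$-dual to the normal piece and the \emph{tangential} 2-form restriction picks up only $E_{\lL}(0)\,d\gamma_0 = O(\lL)$ in every $C^k$, hence in every $H^s(\pa\Omega)$; this gives the third estimate $\|\beta_{\lL}\restrictedto_{\pa\Omega}\|_{H^s}\le C_s\lL$.

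The main obstacle — and the step deserving the most care — is the estimate $\|[dd^*+\lL^{-2}]\beta_{\lL}\|_{L^2(\Omega)}\le C/\sqrt\lL$, because naively $\lL^{-2}\beta_{\lL}$ has $L^2$-norm of order $\lL^{-2}\cdot\sqrt\lL = \lL^{-3/2}$, which is too big; the gain must come from a cancellation between $dd^*\beta_{\lL}$ and $\lL^{-2}\beta_{\lL}$. The key observation is that $e^{\rho/\lL}$ is designed so that $\rho$-derivatives reproduce factors of $\lL^{-1}$: writing $\beta_{\lL}=E_{\lL}'(\rho)\,d\rho\w\gamma + E_{\lL}(\rho)\,d\gamma$ and noting $d^*$ acting on the leading term involves $\pa_\rho$ of $E_{\lL}'=\lL^{-1}E_{\lL}' + e^{\rho/\lL}\psi'(\rho)$, the $\lL^{-1}E_{\lL}'$ piece — after another $d$ — produces $\lL^{-2}E_{\lL}'\,d\rho\w\gamma$, which cancels the $d\rho\w\gamma$-component of $\lL^{-2}\beta_{\lL}$ up to (a) curvature terms from $d^*$ in the curved collar metric, which carry at most one factor of $\lL^{-1}$ and hence contribute $O(\lL^{-1}\cdot\sqrt\lL)=O(\lL^{-1/2})$ in $L^2$, (b) terms involving $\psi'(\rho)$, which are supported in $r_0\le|\rho|\le 2r_0$ where $e^{\rho/\lL}\le e^{-r_0/\lL}$ is exponentially small, hence negligible, and (c) the lower-order term $E_{\lL}(\rho)\,d\gamma$ and its derivatives, whose worst $L^2$-contribution after $\lL^{-2}$ is $\lL^{-2}\cdot\lL^{3/2}=\lL^{-1/2}$. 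I would organize this by expanding $dd^*\beta_{\lL}$ in the collar coordinates, isolating the single term $\lL^{-2}E_{\lL}'(\rho)\,d\rho\w\gamma$ that exactly cancels the principal part of $\lL^{-2}\beta_{\lL}$, and bounding every remaining term by $C/\sqrt\lL$ using the two displayed exponential integrals; the bookkeeping is routine once the cancellation mechanism is identified, so the real content is simply recognizing that $E_{\lL}$ was constructed precisely to make $(dd^*+\lL^{-2})$ act like a first-order, rather than second-order, operator on its leading part.
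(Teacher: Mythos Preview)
Your proposal is correct and follows essentially the same route as the paper: work in collar coordinates near $\pa\Omega$, use $|E_{\lL}(\rho)|\le C\lL e^{\rho/\lL}$ together with $\int_{-2r_0}^0 e^{2\rho/\lL}\,d\rho=O(\lL)$ for the $L^2$ and $H^s$ bounds, and observe that the second normal derivative of $e^{\rho/\lL}\psi(\rho)$ cancels the $\lL^{-2}$ term up to remainders of size $\lL^{-1}e^{\rho/\lL}$. The paper packages the last step more tersely --- it simply asserts $dd^*\beta_{\lL}=-(e^{\rho/\lL}\psi)''\,d\rho\wedge\gamma+\lL^{-1}e^{\rho/\lL}\theta_{\lL}$ with $\theta_{\lL}$ bounded, then reads off $[dd^*+\lL^{-2}]\beta_{\lL}=\lL^{-1}e^{\rho/\lL}\theta_{\lL}'$ --- but your term-by-term accounting of curvature corrections, $\psi'$-supported pieces, and the $E_{\lL}\,d\gamma$ contribution amounts to the same computation.
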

 
 \begin{proof}
   That $\beta_{\lL}$ has support in $V_{r_0},$ is closed and
   can be smoothly extended by 0 is clear from its definition.

   To compute its boundary value we introduce a local oriented,
   orthonormal coframe of the form $\{d\rho,\omega_1,\omega_2\};$ hence
   $dV=\omega_1\wedge \omega_2\wedge d\rho,$ and
   $dS_{\pa\Omega}=i_{N}dV=\omega_1\wedge\omega_2.$ Note that
   $\omega_1(N)=\omega_2(N)=0$ and $d\rho(N)=1,$ implying that
   \begin{equation}
  \star d\rho=\omega_1\wedge \omega_2,\quad   \star\omega_1=\omega_2\wedge d\rho,\quad
  \star\omega_2=-\omega_1\wedge d\rho.
   \end{equation}
   The form $\gamma=\pi^*(\gamma_0),$ and therefore
   $d\gamma=\pi^*(d_{\pa\Omega}\gamma_0).$ Since $\pi_*(N)=0,$ it therefore
   follows that, locally,
   \begin{equation}
     \gamma=a\omega_1+b\omega_2\text{ and }d\gamma=c\omega_1\wedge\omega_2.
   \end{equation}
   Computing in this basis we see that
   \begin{equation}
     \star\beta_{\lL}\restrictedto_{\pa\Omega}=a\omega_2-b\omega_1\restrictedto_{\pa\Omega}=\star_2\gamma_0,
   \end{equation}
   as stated.

   We now turn to the estimates. Integration by parts gives
   \begin{equation}
     E_{\lL}(r)=\lL e^{\frac{r}{\lL}}\psi(r)-\lL\int_{-\infty}^re^{\frac{s}{\lL}}\psi'(s)ds,
   \end{equation}
   which shows that
   \begin{equation}\label{eqn4.16.101}
     |E_{\lL}(r)|\leq\begin{cases} 0&\text{ for }r\leq -2r_0,\\
     C\lL e^{\frac{r}{\lL}}&\text{ for }-2r_0<r\leq 0.
     \end{cases}
   \end{equation}
   Since $\beta_{\lL}\restrictedto_{\pa\Omega}=E_{\lL}(0)d\gamma_0,$ the estimates on
   $\|\beta_{\lL}\restrictedto_{\pa\Omega}\|_{H^s(\pa\Omega)}$ follow from this
   bound and the smoothness of $\gamma_0.$ Using the local coframe, we easily
   see that
   \begin{equation}
   \|\beta_{\lL}\|^2_{L^2(\Omega)}=  \int_{\Omega}\beta_{\lL}\wedge\star \beta_{\lL}\leq
     C\int_{-2r_0}^{0}e^{\frac{2r}{\lL}}dr\leq C\lL.
   \end{equation}

   This leaves only an estimate on $\|[dd^*+\lL^{-2}]\beta_{\lL}\|_{L^2(\Omega)}.$
   Computing in the local coframe we see that
   \begin{equation}
     dd^*\beta_{\lL}=-(e^{\frac{\rho}{\lL}}\psi(\rho))''d\rho\wedge\gamma+\frac{e^{\frac{\rho}{\lL}}}{\lL}\theta_{\lL},
   \end{equation}
   where $\theta_{\lL}$ is a bounded 2-form, as $\lL\to 0^+.$ Arguing as before
   we see that
   \begin{equation}
    [dd^*+\lL^{-2}]\beta_{\lL}=\frac{e^{\frac{\rho}{\lL}}}{\lL}\theta'_{\lL},
   \end{equation}
   where $\theta'_{\lL}$ is a uniformly bounded 2-form supported in $V_{r_0}.$ Hence
   \begin{equation}
      \|[dd^*+\lL^{-2}]\beta_{\lL}\|^2_{L^2(\Omega)}\leq
      C\int_{-2r_0}^0\frac{e^{\frac{2r}{\lL}}}{\lL^2}dr\leq \frac{C}{\lL},
   \end{equation}
   as claimed.
 \end{proof}

 \begin{remark}
   Observe that $|\beta_{\lL}(x)|\leq
   Ce^{\frac{\rho(x)}{\lL}}\chi_{[-2r_0,0]}(\rho(x)),$  which implies that
   \begin{equation}\label{eqn4.21.106}
     \|\beta_{\lL}\|_{L^1(\Omega)}\leq C\lL.
   \end{equation}
 \end{remark}

For  $\beta_{\lL}$ we take  the closed 2-form constructed
using  Lemma~\ref{lem1} with
$$\gamma_0=-\star_2[\star (\bEta^{\out+}
  +\bEta^{\In})\restrictedto_{\pa\Omega}],$$
so that
 \begin{equation}
 \beta_{\lL} = -d\left[E_{\lL}(\rho(x)) \wedge \pi^{*}\left(\star_{2} (\star (\bEta^{\out+} + \bEta^{\In})\restrictedto_{\pa\Omega}) \right)\right] \,,
 \end{equation}
 and $\star \beta_{\lL}\restrictedto_{\pa\Omega} = \star (\bEta^{\out+} + \bEta^{\In})\restrictedto_{\pa\Omega}$.
 
 \subsection{The harmonic functions $u_{\lL}$ and $v_{\lL}$}
 Next, we turn our attention to the harmonic functions $u_{\lL}$ and $v_{\lL}$. Assuming $\bEta^{00}_{\lL}$ were given,   
 $u_{\lL}$, $v_{\lL}$ would satisfy the system of equations
 \begin{equation}\label{eqn4.6.100}
 \begin{aligned}
 \Delta u_{\lL} &= 0, \text{ in } \Omega^{+}, \\
 \Delta v_{\lL} &= 0, \text{ in } \Omega, \\
 u_{\lL} &= v_{\lL}, \text{ on } \pa \Omega, \\
 [\partial_{n} u_{\lL} - \partial_{n} v_{\lL}]_{\pa\Omega}&= \frac{[\bEta^{00}_{\lL}+
   \beta_{\lL}]\restrictedto_{\pa\Omega}}{dS_{\pa\Omega}}, \\
   |u_{\lL}| &\to 0 \text{ as } |x| \to \infty \, .
 \end{aligned}
 \end{equation}

The system of equations in~\eqref{eqn4.6.100} has a unique solution that can be
expressed in terms of the single layer potential as follows,
 \begin{equation}
 \begin{aligned}
 u_{\lL} &= -\cS\left[\frac{[\bEta^{00}_{\lL}+
   \beta_{\lL}]\restrictedto_{\pa\Omega}}{dS_{\pa\Omega}} \right] \quad \text{ in } \Omega^c \\
   v_{\lL} &= -\cS\left[\frac{[\bEta^{00}_{\lL}+
   \beta_{\lL}]\restrictedto_{\pa\Omega}}{dS_{\pa\Omega}} \right] \quad \text{ in } \Omega .
   \end{aligned}
 \end{equation}
The fact that this is a solution follows from the properties of single layer
potentials given in~\Cref{subsec:lp}, in particular the jump formula for the
normal derivative.

This construction can then be used to eliminate $\star d u_{\lL}$ and $\star d v_{\lL}$ from the ansatz. 
Let $B$ denote the operator, which acts on closed 2-forms $\theta$ 
defined in $\overline{\Omega}$ given by 
\begin{equation}
   B\theta=-\star d \cS \circ i^*\theta,\text{ where }
   i^*\theta=\frac{\theta\restrictedto_{\pa\Omega}}{dS_{\pa\Omega}}.
 \end{equation}
 Then
 \begin{equation}
 \bEta^{0}_{\lL} =  \bEta^{00}_{\lL} + \beta_{\lL} + B(\bEta^{00}_{\lL} + \beta_{\lL}) \, .
 \end{equation}

 Let $\cZ^2(\Omega)$ denote closed 2-forms defined in $\Omega.$  Note that the range of $B$ is contained in $\cZ^2(\Omega).$  The theorem below establishes certain useful properties of $B.$ 
 \begin{theorem}\label{thm1}
   The operator $B:L^2(\Omega;\cZ^2)\to L^2(\Omega;\cZ^2)$ is
   bounded and
   self adjoint; there is an $0<m<1,$ so that for $\theta\neq 0,$ it satisfies the estimate
   \begin{equation}\label{eqn2.51.2}
     -1<-m<\frac{\langle B\theta,\theta\rangle_{\Omega}}
     {\langle \theta,\theta\rangle_{\Omega}} \leq 0.
   \end{equation}
 \end{theorem}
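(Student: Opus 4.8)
The plan is to conjugate $B$ to the boundary and recognize it in terms of the single-layer operator $S$. Fix smooth closed $2$-forms $\theta_1,\theta_2$ on $\overline\Omega$, set $\sigma_j=i^*\theta_j$ and $u_j=\cS[\sigma_j]$. Using $\star\star=\Id$ on $1$- and $2$-forms in $\bbR^3$, $d\theta_j=0$, and Stokes' theorem with the orientation $dS_{\pa\Omega}=i_n\,dV$, one computes
\begin{equation}\label{eq:B:keyid}
\langle B\theta_1,\theta_2\rangle_\Omega=-\int_\Omega(\star du_1)\wedge\star\theta_2=-\int_\Omega\theta_2\wedge du_1=-\int_\Omega d(u_1\theta_2)=-\int_{\pa\Omega}u_1\,\theta_2\restrictedto_{\pa\Omega}.
\end{equation}
Since the single-layer potential is continuous across $\pa\Omega$, $u_1\restrictedto_{\pa\Omega}=S\sigma_1$ and $\theta_2\restrictedto_{\pa\Omega}=\sigma_2\,dS_{\pa\Omega}$, so $\langle B\theta_1,\theta_2\rangle_\Omega=-\langle S\sigma_1,\sigma_2\rangle_{\pa\Omega}$. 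The trace lemma (\Cref{tr_lem}) gives $\|\sigma_j\|_{H^{-1/2}(\pa\Omega)}\le C\|\theta_j\|_{L^2(\Omega)}$, and $\cS\colon H^{-1/2}(\pa\Omega)\to H^1(\Omega)$ is bounded, so $\|B\theta\|_{L^2(\Omega)}=\|d\cS[\sigma]\|_{L^2(\Omega)}\le C\|\theta\|_{L^2(\Omega)}$; moreover $B\theta$ is closed because $\cS[\sigma]$ is harmonic in $\Omega$. Thus $B$ extends to a bounded operator on $L^2(\Omega;\cZ^2)$, and, smooth closed forms being dense in $L^2(\Omega;\cZ^2)$, the identity $\langle B\theta_1,\theta_2\rangle_\Omega=-\langle S\,i^*\theta_1,i^*\theta_2\rangle_{\pa\Omega}$ persists for all $\theta_1,\theta_2\in L^2(\Omega;\cZ^2)$ by continuity of both sides.

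Self-adjointness and the upper bound are then immediate. Since $S$ is self-adjoint, $\langle B\theta_1,\theta_2\rangle_\Omega=-\langle i^*\theta_1,S\,i^*\theta_2\rangle_{\pa\Omega}=\langle\theta_1,B\theta_2\rangle_\Omega$; and with $\theta_1=\theta_2=\theta$, positivity of $S$ gives $\langle B\theta,\theta\rangle_\Omega=-\langle S\,i^*\theta,i^*\theta\rangle_{\pa\Omega}\le 0$. This is not strict: any nonzero closed $2$-form supported away from $\pa\Omega$ has $i^*\theta=0$ and hence $\langle B\theta,\theta\rangle_\Omega=0$.

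For the remaining lower bound I would use an energy identity. By Green's identities on $\Omega$ and on $\Omega^c$—using that $\cS[\sigma]$ is continuous across $\pa\Omega$, harmonic on each side, and $O(|x|^{-1})$ at infinity—one has $\langle S\sigma,\sigma\rangle_{\pa\Omega}=\|\nabla\cS[\sigma]\|^2_{L^2(\bbR^3)}$ (consistent with the stated positivity of $S$); splitting the integral and recalling $\|B\theta\|_{L^2(\Omega)}=\|\nabla\cS[\sigma]\|_{L^2(\Omega)}$,
\begin{equation}\label{eq:B:energyid}
-\langle B\theta,\theta\rangle_\Omega=\|B\theta\|^2_{L^2(\Omega)}+\|\nabla\cS[\sigma]\|^2_{L^2(\Omega^c)},\qquad\sigma=i^*\theta.
\end{equation}
Now $\cS[\sigma]$ restricted to $\Omega$ is the harmonic extension of $g:=\cS[\sigma]\restrictedto_{\pa\Omega}=S\sigma$, and restricted to $\Omega^c$ it is the decaying harmonic extension of the same $g$; so the two terms on the right of \eqref{eq:B:energyid} are the interior and exterior Dirichlet energies $E_{\mathrm{int}}(g)$ and $E_{\mathrm{ext}}(g)$. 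Boundedness of the interior Dirichlet-to-Neumann map gives $E_{\mathrm{int}}(g)\le C_1\|g\|^2_{H^{1/2}(\pa\Omega)}$ for all $g\in H^{1/2}(\pa\Omega)$, while the exterior Sobolev inequality $\|w\|_{L^6(\Omega^c)}\le C\|\nabla w\|_{L^2(\Omega^c)}$ applied to the decaying harmonic extension $w$ of $g$, together with the trace inequality on a bounded collar of $\pa\Omega$ in $\overline{\Omega^c}$, gives $E_{\mathrm{ext}}(g)\ge c_2\|g\|^2_{H^{1/2}(\pa\Omega)}$. Hence $\|\nabla\cS[\sigma]\|^2_{L^2(\Omega^c)}=E_{\mathrm{ext}}(g)\ge\epsilon\,E_{\mathrm{int}}(g)=\epsilon\|B\theta\|^2_{L^2(\Omega)}$ with $\epsilon=c_2/C_1>0$, so \eqref{eq:B:energyid} yields $-\langle B\theta,\theta\rangle_\Omega\ge(1+\epsilon)\|B\theta\|^2_{L^2(\Omega)}$. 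Combined with the Cauchy--Schwarz bound $-\langle B\theta,\theta\rangle_\Omega\le\|B\theta\|_{L^2(\Omega)}\|\theta\|_{L^2(\Omega)}$, this forces $\|B\|_{L^2\to L^2}\le(1+\epsilon)^{-1}<1$; since $B$ is self-adjoint, $-\langle B\theta,\theta\rangle_\Omega\le\|B\|_{L^2\to L^2}\|\theta\|^2_{L^2(\Omega)}$ for all $\theta$, and any $m$ with $\|B\|_{L^2\to L^2}\le m<1$ satisfies \eqref{eqn2.51.2} (and $m>0$ since $B\neq0$).

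The crux is the strict gap $m<1$. The non-strict bound $m\le1$ is a formality (apply Cauchy--Schwarz directly to \eqref{eq:B:keyid}), but pushing it below $1$ requires knowing that the single-layer potential of any nonzero density always carries a fixed fraction of its energy into the exterior region. Identity \eqref{eq:B:energyid} pins this defect down exactly, and reduces the matter to the Dirichlet-energy comparison $E_{\mathrm{ext}}(g)\gtrsim E_{\mathrm{int}}(g)$, which is where the geometry enters: $\Omega^c$ is an unbounded domain with smooth compact boundary, so its Dirichlet energy is coercive on $H^{1/2}(\pa\Omega)$ with no constant-mode degeneracy. A bookkeeping point: smoothness of $\theta$ is used only to run Stokes' theorem in \eqref{eq:B:keyid}, and the trace lemma together with density of smooth closed forms transfers everything to $L^2(\Omega;\cZ^2)$.
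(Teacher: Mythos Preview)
Your proof is correct and, for boundedness, self-adjointness, and the upper bound $\le 0$, proceeds exactly as the paper does (integrate by parts to get $\langle B\theta_1,\theta_2\rangle_\Omega=-\langle S\,i^*\theta_1,i^*\theta_2\rangle_{\pa\Omega}$ and invoke self-adjointness and positivity of $S$). The lower bound, however, you obtain by a genuinely different route.

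The paper's argument for the strict gap $m<1$ is spectral: using the Calder\'on identity $DS=SS'$ it shows that $S^{1/2}S'S^{-1/2}$ is compact self-adjoint with eigenpairs $(h_j,\mu_j)$, $\mu_j\in[-\tfrac12,\tfrac12)$, and then constructs $\theta_j=\star d\cS(S^{-1/2}h_j)$ satisfying $B\theta_j=-(\mu_j+\tfrac12)\theta_j$; since $S'$ is compact, the $\mu_j$ accumulate only at $0$, so $\sup_j(\mu_j+\tfrac12)<1$. Density of $\Span\{\theta_j\}$ in $(\Ker B)^\bot$ finishes. Your route instead exploits the energy identity $-\langle B\theta,\theta\rangle_\Omega=\|B\theta\|^2_{L^2(\Omega)}+\|\nabla\cS[\sigma]\|^2_{L^2(\Omega^c)}$ and the comparison $E_{\mathrm{ext}}(g)\ge\epsilon E_{\mathrm{int}}(g)$ of exterior to interior Dirichlet energies of $g=S\sigma$; Cauchy--Schwarz then forces $\|B\|\le(1+\epsilon)^{-1}<1$. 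What the paper's approach buys is finer spectral information about $B$ (its nonzero eigenvalues are exactly $-(\mu_j+\tfrac12)$), which is not needed for the theorem but clarifies the structure. What your approach buys is economy: no Calder\'on identity, no eigenfunction density argument, and an explicit bound $m\le(1+\epsilon)^{-1}$ tied to the DtN comparison. It is worth noting that the paper itself uses essentially your energy-ratio idea, in the form $\mu_j=\frac{U-V}{2(U+V)}$, when treating the thin-shell case in Proposition~\ref{prop1.9}; so your argument is in the same spirit as the paper's treatment of the harder two-component geometry.
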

   \begin{proof}
   From~\Cref{app_tr_lem}, we know that $i^{*}$ is a bounded operator
   from $L^{2}(\Omega; \cZ^2)$  to $H^{-1/2}(\pa \Omega).$
   The single layer operator is a bounded operator from $H^{-1/2}(\pa \Omega)$ to
   $H^{1}(\Omega).$ Finally the exterior derivative is a bounded map from 
   $H^{1}(\Omega)$ to $L^{2}(\Omega)$.
 The boundedness of  $B:L^2(\Omega;\cZ^2)\to L^2(\Omega;\cZ^2)$ is thereby proved as $B$ is a composition of the following bounded maps
   \begin{equation}\label{eqn4.27.105}
     L^2(\Omega;\cZ^2)\overset{i^*}{\longrightarrow} H^{-\frac
       12}(\pa\Omega)\overset{-\cS} {\longrightarrow} H^1(\Omega)\overset{\star
       d}{\longrightarrow} L^2(\Omega;\cZ^2).
   \end{equation}
   
   To prove that it is self adjoint, we
  note that integration by parts shows that
  \begin{equation}
    \langle B\theta_1,\theta_2\rangle_{\Omega}=-\langle S\tau_1,\tau_2\rangle_{\pa\Omega}
  \end{equation}
  where $i^*\theta_i=\tau_i.$ The self adjointness of $B$ follows from
  classical fact that $S$ is self adjoint. It is also well known that
  $S:H^{-\frac 12}(\pa\Omega)\to H^{\frac 12}(\pa\Omega),$ is a
  positive operator, see~\cite{NedelecPlanchard1973}, showing that $B\leq 0.$

  It remains to prove the lower bound. Recall that single ($S$) and double layer potentials ($D$),
  and the normal derivative of the single layer potential  ($S'$) are related by the  Calderon identity
  $$DS = SS',$$
   see~\cite{Nedelec}, for example.
  As $D^*=S',$ this implies that $S^{\frac 12}S' S^{-\frac 12}$ is a compact, self
  adjoint operator,  of order -1. Here $S^{\frac 12}$ is the positive
  square root of $S.$
Hence there is a complete orthonormal basis, $\{h_j:\: j\geq 0\},$ for $L^2(\pa\Omega),$
   which satisfies
  \begin{equation}
    S^{\frac 12}S' S^{-\frac 12}h_j=\mu_j h_j.
  \end{equation}
  As $S^{\frac 12}S' S^{-\frac 12}$ is a pseudodifferential operator
  of order $-1$ the eigenfunctions belong to
  $\cC^{\infty}(\pa\Omega).$ If we let $g_j=S^{-\frac 12}h_j,$ then we
  see that  $S'g_j=\mu_jg_j,$ and
  \begin{equation}
    \langle S'g_j,g_j\rangle_{\pa\Omega}= \mu_j\langle g_j,g_j\rangle_{\pa\Omega}.
  \end{equation}
  It is a classical fact that the spectrum of $S'$ is contained
  in $[-\frac 12,\frac 12),$ hence $\mu_j\in [-\frac 12,\frac 12),$
    see~\cite{ColtonKress}. We let
    $\mu_0=-\frac 12,$ which is easily seen to be a simple
    eigenvalue. The harmonic function $\cS g_0$ has vanishing normal
    derivative and is therefore constant in $\Omega.$

 If we define
    \begin{equation}
      \theta_j=\star d\cS g_j,\text{ for }j>0,
    \end{equation}
    then
    \begin{equation}
      \theta_j\restrictedto_{\pa\Omega}=\left(\mu_j+\frac 12\right)g_j dS_{\pa\Omega}.
    \end{equation}
    Note that $d\cS g_0=0.$ These forms satisfy
    \begin{equation}
      B\theta_j=-\left(\mu_j+\frac 12\right)\theta_j.
    \end{equation}
    The non-trivial lower bound in~\eqref{eqn2.51.2} follows from that facts that, for all $j,$
    \begin{equation}
      0\leq \mu_j+\frac 12<m<1,
    \end{equation}
    and the $\Span\{\theta_j\}$ is dense in $[\Ker B]^{\bot}.$ To see this, suppose that
    $\langle\theta_j,\theta\rangle=0,$ for all $j>0.$ This is equivalent to
    \begin{equation}
     0= \langle B\theta_j,\theta\rangle_{\Omega}=(\mu_j+\frac
     12)\langle S g_j,i^*\theta\rangle_{\pa\Omega}=
     (\mu_j+\frac
     12)\langle h_j,S^{\frac 12}i^*\theta\rangle_{\pa\Omega}.
    \end{equation}
    The orthogonal complement of $\Span\{h_j:\:j>0\}$ are multiples  of $h_0.$
    If $S^{\frac 12}i^*\theta=ch_0,$ then $i^*\theta=ag_0,$ and
    therefore $B\theta=0.$ This completes the proof of the theorem.

 \end{proof}
   \begin{remark}
     The $\Ker B$ is infinite dimensional, containing the subspace
     $\{\theta\in L^2(\Omega;\cZ^2):
     \: i^*(\theta)=0\}.$
   \end{remark}
   Using the decomposition of $B$ in~\eqref{eqn4.27.105} and Lemma~\ref{tr_lem}
   we easily prove the following corollary.
   \begin{corollary}\label{cor1}
     For any $s\geq 0$ there is a constant $C_s$ so that for any closed 2-form $\theta$
     \begin{equation}
       \|B\theta\|_{H^s(\Omega)}\leq C_s\|\theta\|_{H^s(\Omega)}\text{ and }
        \|B\theta\|_{H^s(\Omega)}\leq C_s\|i^*\theta\|_{H^{s-\frac 12}(\pa\Omega)}
     \end{equation}
   \end{corollary}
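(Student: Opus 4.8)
The plan is to read the result straight off the factorization \eqref{eqn4.27.105} of $B$, replacing each of the three $L^2$-type mapping statements used in the proof of Theorem~\ref{thm1} by its $H^s$-scaled version. Specifically, I would verify that for every $s\geq 0$ the three maps
\[
H^s(\Omega;\cZ^2)\overset{i^*}{\longrightarrow}H^{s-\frac12}(\pa\Omega)\overset{-\cS}{\longrightarrow}H^{s+1}(\Omega)\overset{\star d}{\longrightarrow}H^s(\Omega;\cZ^2)
\]
are bounded, with norms depending only on $s$ and $\Omega$. Composing all three gives the first inequality of the corollary; composing only the last two gives the second inequality, which makes no reference to the volume norm of $\theta$.

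For the first arrow, note that $i^*\theta$ is the restriction $\theta\restrictedto_{\pa\Omega}$ divided by the smooth, nonvanishing area form $dS_{\pa\Omega}$. Lemma~\ref{tr_lem} furnishes exactly $\|\theta\restrictedto_{\pa\Omega}\|_{H^{s-1/2}(\pa\Omega)}\leq C_s\|\theta\|_{H^s(\Omega)}$ for closed $\theta$, and division by $dS_{\pa\Omega}$ is bounded on each $H^t(\pa\Omega)$; this is the only place the closedness of $\theta$ enters. For the second arrow, I would invoke the classical Sobolev mapping property of the Laplace single layer potential on a smooth bounded domain: restricted to $\overline\Omega$ it carries $H^t(\pa\Omega)$ into $H^{t+3/2}(\Omega)$ for every $t\in\bbR$, so taking $t=s-\tfrac12$ lands it in $H^{s+1}(\Omega)$. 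For the third arrow, $d\colon H^{s+1}(\Omega;\Lambda^1)\to H^s(\Omega;\Lambda^2)$ is bounded and $\star$ is a pointwise bundle isometry, hence bounded on each $H^s$; and since $\cS\tau$ is harmonic in $\Omega$ we have $d^*d(\cS\tau)=0$, which is equivalent to $d\star d(\cS\tau)=0$, so $\star d(\cS\tau)\in\cZ^2(\Omega)$ and the composition really does take values in closed forms.

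I do not expect a genuine obstacle: the two nontrivial inputs — the $H^s$-trace estimate for closed $2$-forms (Lemma~\ref{tr_lem}, proved in the appendix) and the $3/2$-derivative gain of the single layer potential on a smooth boundary — are both standard. The only things to watch are the bookkeeping of Sobolev exponents (the single layer potential gains $\tfrac32$, the trace loses $\tfrac12$, and $d$ loses $1$, so the composite $B$ is order $0$ and preserves $H^s$) and the elementary but necessary observation that harmonicity of $\cS\tau$ is what makes $\star d\cS\tau$ closed, so that $H^s(\Omega;\cZ^2)$ is the correct target. For the second inequality one should, strictly, phrase it as holding whenever the right-hand side $\|i^*\theta\|_{H^{s-1/2}(\pa\Omega)}$ is finite, which is automatic for the closed $2$-forms arising in the ansatz.
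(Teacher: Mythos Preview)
Your proposal is correct and follows exactly the approach the paper indicates: it says only ``Using the decomposition of $B$ in~\eqref{eqn4.27.105} and Lemma~\ref{tr_lem} we easily prove the following corollary,'' and your write-up simply spells out the bookkeeping of Sobolev exponents along that factorization. The additional care you take (noting that division by $dS_{\pa\Omega}$ is bounded on each $H^t(\pa\Omega)$, and that harmonicity of $\cS\tau$ ensures the image lies in $\cZ^2$) is appropriate and does not depart from the paper's intent.
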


\subsection{Analysis of $\bEta^{00}_{\lL}$} \label{sec:a-analysis}
Finally, we turn our attention to the analysis of the closed 2-form $\bEta^{00}_{\lL}$.  
Combining the
construction of $u_{\lL}$, $v_{\lL}$, and $\beta_{\lL}$, we note that~\eqref{eqn2.1.2} implies that $\bEta^{00}_{\lL}$  satisfies
 \begin{equation}\label{eqn2.46.05}
\cA_{\lL}\bEta^{00}_{\lL} :=  \left[dd^*+\frac{\Id +B}{\lL^2}\right]\bEta^{00}_{\lL}=
    -\left[dd^*+\frac{1}{\lL^2}\right]\beta_{\lL}-\frac{1}{\lL^2}B\beta_{\lL}\,,\\
 \end{equation}
Note that the above relation does not vacuously imply that $\bEta^{00}_{\lL} = -\beta_{\lL}$ since
$\bEta^{00}_{\lL}$ must have $0$ tangential data. Thus the operator $\cA_{\lL}$ must be understood in
an appropriate domain.

We begin with the Laplacian, $\Delta_A=dd^*+d^*d$ on the domain
 \begin{equation}
 	\Dom(\Delta_A) = \{ \eta \in H^2(\Omega; \Lambda^2): \star \eta \restrictedto_{\pa \Omega} = \star d \eta \restrictedto_{\pa \Omega} = 0 \}.
 \end{equation}
$\Delta_A$ is a non-negative, self adjoint operator, the operator
 $\Delta_A+\mu$ is invertible, for $\mu>0,$ with the estimate
 \begin{equation}
   \|[\Delta_A+\mu]^{-1}\|_{L^2(\Omega)\to L^2(\Omega)}\leq \frac{1}{\mu}.
 \end{equation}
 Furthermore, it is a consequence of the Hodge decomposition that,
 for $\theta\in\cZ^2(\Omega),$ we have
 \begin{equation}
   [\Delta_A+\mu]^{-1}\cZ^2(\Omega)\subset  H^2(\Omega;\cZ^2).
 \end{equation}
 Thus if $\theta\in L^2(\Omega;\cZ^2),$
 i.e. $d\theta=0,$ in the sense of distributions, then
 \begin{equation}
   (dd^*+\mu)[\Delta_A+\mu]^{-1}\theta=\theta,\text{ and
   }(\star[\Delta_A+\mu]^{-1}\theta)\restrictedto_{\pa\Omega}=0. 
 \end{equation}

 The operator $\cA_{\lL}$ is quite similar
 \begin{proposition}
   The operator $\cA_{\lL}$ is invertible for any $\lL>0,$ with
   \begin{equation}
     \cA_{\lL}^{-1}:L^2(\Omega;\cZ^2)\longrightarrow \Dom(\Delta_A)\cap \cZ^2(\Omega).
   \end{equation}
There are positive constants, $C_0, C_1, C_2,$ so that the inverse satisfies the   estimates
   \begin{equation}\label{eqn2.68.055}
   \|\cA_{\lL}^{-1}\|_{L^2\to L^2}\leq C_0\lL^2,\,
   \|\cA_{\lL}^{-1}\eta\|_{H^1}\leq C_1\lL\|\eta\|_{L^2},\,
    \|\cA_{\lL}^{-1}\|_{L^2\to H^2}\leq C_2.
 \end{equation}
 \end{proposition}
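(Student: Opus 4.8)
The plan is to write $\cA_{\lL} = dd^* + \lL^{-2}(\Id + B)$ and treat it as a perturbation of the model operator $\Delta_A + \lL^{-2}$ on the space of closed $2$-forms. The key structural point is that on $\cZ^2(\Omega)$ we have $dd^* = \Delta_A$ (since $d\theta = 0$), and $B$ is self-adjoint with $-m \le B \le 0$ by Theorem~\ref{thm1}, with $m < 1$. Hence $\Id + B$ is a bounded, self-adjoint, \emph{positive} operator on $L^2(\Omega;\cZ^2)$ satisfying $(1-m)\Id \le \Id + B \le \Id$. First I would verify that $\cA_{\lL}$, defined on $\Dom(\Delta_A)\cap\cZ^2(\Omega)$, is self-adjoint: this follows because $\Delta_A$ restricted to this domain is self-adjoint (standard Hodge theory, as recalled just above the Proposition) and $\lL^{-2}(\Id+B)$ is a bounded self-adjoint perturbation commuting with nothing in particular but still self-adjoint as an additive term. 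Since $\Delta_A \ge 0$ on closed forms and $\Id + B \ge (1-m)\Id > 0$, we get $\cA_{\lL} \ge \lL^{-2}(1-m)\Id > 0$, so $\cA_{\lL}$ is invertible with
\begin{equation}
\|\cA_{\lL}^{-1}\|_{L^2\to L^2} \le \frac{\lL^2}{1-m} =: C_0 \lL^2,
\end{equation}
which is the first estimate. For the mapping property, given $\eta \in L^2(\Omega;\cZ^2)$, set $\theta = \cA_{\lL}^{-1}\eta$; writing $(\Delta_A + \lL^{-2})\theta = \eta - \lL^{-2}B\theta$ and applying the earlier-stated Hodge regularity $[\Delta_A + \mu]^{-1}\cZ^2(\Omega) \subset H^2(\Omega;\cZ^2)$ with $\mu = \lL^{-2}$, together with $B\theta \in \cZ^2(\Omega)$ (range of $B$ is in $\cZ^2$), shows $\theta \in H^2(\Omega;\cZ^2)$ and that $\star\theta\restrictedto_{\pa\Omega} = 0$, i.e.\ $\theta \in \Dom(\Delta_A)$; so indeed $\cA_{\lL}^{-1}: L^2(\Omega;\cZ^2) \to \Dom(\Delta_A)\cap\cZ^2(\Omega)$.

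\textbf{The two Sobolev estimates.} For the $H^2$ bound, I would use the identity $\Delta_A \theta = \eta - \lL^{-2}(\Id+B)\theta$ and the elliptic a priori estimate for $\Delta_A$ on its domain, namely $\|\theta\|_{H^2} \le C(\|\Delta_A\theta\|_{L^2} + \|\theta\|_{L^2})$ (this is the standard interior-plus-boundary regularity for the Hodge Laplacian with these absolute-type boundary conditions). Then
\begin{equation}
\|\theta\|_{H^2} \le C\bigl(\|\eta\|_{L^2} + \lL^{-2}\|(\Id+B)\theta\|_{L^2} + \|\theta\|_{L^2}\bigr) \le C\bigl(\|\eta\|_{L^2} + \lL^{-2}\|\theta\|_{L^2}\bigr),
\end{equation}
using $\|\Id+B\|\le 1$ and absorbing the lower-order term; inserting $\|\theta\|_{L^2} \le C_0\lL^2\|\eta\|_{L^2}$ gives $\|\theta\|_{H^2} \le C_2\|\eta\|_{L^2}$, which is the third estimate. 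The $H^1$ estimate is obtained by interpolation between the $L^2$ bound ($\sim\lL^2$) and the $H^2$ bound ($\sim 1$): $\|\theta\|_{H^1} \le C\|\theta\|_{L^2}^{1/2}\|\theta\|_{H^2}^{1/2} \le C(\lL^2\|\eta\|_{L^2})^{1/2}(\|\eta\|_{L^2})^{1/2} = C_1\lL\|\eta\|_{L^2}$. Alternatively, and perhaps cleaner, one can pair $\cA_{\lL}\theta = \eta$ with $\theta$ in $L^2$: $\langle dd^*\theta,\theta\rangle + \lL^{-2}\langle(\Id+B)\theta,\theta\rangle = \langle\eta,\theta\rangle$, which by positivity of $\Id+B$ yields $\|d^*\theta\|_{L^2}^2 \le \|\eta\|_{L^2}\|\theta\|_{L^2} \le C_0\lL^2\|\eta\|_{L^2}^2$, hence $\|d^*\theta\|_{L^2} \le C\lL\|\eta\|_{L^2}$; combined with $d\theta = 0$ and the Gaffney/Hodge inequality $\|\theta\|_{H^1} \le C(\|d\theta\|_{L^2} + \|d^*\theta\|_{L^2} + \|\theta\|_{L^2})$ (valid with the given boundary conditions) this again gives $\|\theta\|_{H^1} \le C\lL\|\eta\|_{L^2}$.

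\textbf{Main obstacle.} The routine parts are the perturbation-theoretic invertibility and the algebra of the estimates; the point requiring care is justifying the \emph{a priori} elliptic estimate $\|\theta\|_{H^2} \le C(\|\Delta_A\theta\|_{L^2}+\|\theta\|_{L^2})$ and the Gaffney inequality uniformly on $\Dom(\Delta_A)\cap\cZ^2(\Omega)$ --- that is, confirming that the boundary conditions $\star\theta\restrictedto_{\pa\Omega}=\star d\theta\restrictedto_{\pa\Omega}=0$ form an elliptic (coercive) boundary value problem for the Hodge Laplacian on $2$-forms in $\bbR^3$, so that the constants are genuinely $\lL$-independent. This is classical Hodge theory on manifolds with boundary (the absolute boundary conditions), referenced already via \cite{Taylor2}, so the work is mainly in citing it correctly and noting that the extra term $\lL^{-2}(\Id+B)$ is a \emph{bounded} operator of order zero and therefore does not affect the principal symbol or the Lopatinski--Shapiro condition --- it only shifts the spectrum favorably. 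Once that is in hand, all three estimates in~\eqref{eqn2.68.055} follow by the bookkeeping above.
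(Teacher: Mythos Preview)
Your proposal is correct and follows essentially the same route as the paper: self-adjointness of $\cA_{\lL}$ on $\Dom(\Delta_A)\cap\cZ^2(\Omega)$ via bounded perturbation, the coercivity bound $\langle\cA_{\lL}\theta,\theta\rangle \ge \|d^*\theta\|_{L^2}^2 + \lL^{-2}(1-m)\|\theta\|_{L^2}^2$ from Theorem~\ref{thm1} to get the $L^2\to L^2$ estimate, then the $H^1$ bound from pairing plus the Gaffney inequality (your ``alternatively'' argument is exactly what the paper does), and finally the $H^2$ bound via the elliptic a~priori estimate for $\Delta_A$ --- the paper only says this last step is ``easily'' done, and your write-up is in fact more explicit than the paper's on this point. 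The interpolation route you offer for $H^1$ is a legitimate variant but not needed.
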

\begin{proof}
 As the operator $B$ is bounded, self adjoint, and has its range in closed
 2-forms, it follows from elementary perturbation theory that $\cA_{\lL}$ is
 also self adjoint on $\Dom(\Delta_A)\cap \cZ^2(\Omega),$ see~\cite{Kato}.
 Theorem~\ref{thm1} shows that for
 $\theta\in\Dom(\Delta_A)\cap\cZ^2(\Omega)$ we have
 \begin{equation}\label{eqn2.65.2}
   \langle \cA_{\lL}\theta,\theta\rangle_{\Omega}\geq
   \langle d^*\theta,d^*\theta\rangle_{\Omega}+\frac{(1-m)}{\lL^2}\langle \theta,\theta\rangle_{\Omega},
 \end{equation}
 as $1-m>0,$ this shows that the nullspace of $\cA_{\lL}$ is trivial for any
 $\lL\in (0,\infty).$ As $\cA_{\lL}$ is self adjoint, it is invertible for any
 $\lL>0,$ with
 \begin{equation}
   \cA_{\lL}^{-1}:L^2(\Omega;\cZ^2)\longrightarrow \Dom(\Delta_A)\cap \cZ^2(\Omega).
 \end{equation}
 From~\eqref{eqn2.65.2} it follows that
 \begin{equation}\label{eqn2.68.05}
   \|\cA_{\lL}^{-1}\|_{L^2\to L^2}\leq C_0\lL^2.
 \end{equation}

 The estimate in~\eqref{eqn2.65.2} and standard elliptic theory imply
 that, for $\theta\in\Dom(\Delta_A)$ we have the estimate
 \begin{equation}
   \begin{split}
   \|\theta\|_{H^1(\Omega)}^2&\leq C \langle
   \cA_{\lL}\theta,\theta\rangle_{\Omega}\\
   &\leq C\|\cA_{\lL}\theta\|_{L^2(\Omega)}\|\theta\|_{L^2(\Omega)}.
     \end{split}
 \end{equation}
 Setting $\theta=\cA_{\lL}^{-1}\eta,$ we see  that
 \begin{equation}
   \|\cA_{\lL}^{-1}\eta\|_{H^1}\leq C\lL\|\eta\|_{L^2},
 \end{equation}
 hence
 \begin{equation}\label{eqn2.71.05}
    \|\cA_{\lL}^{-1}\|_{L^2\to H^1}\leq C_1\lL.
 \end{equation}
 Using these estimates and the equation $\cA_{\lL}\theta=\eta,$ we
 easily show that
 \begin{equation}
   \|\cA_{\lL}^{-1}\|_{L^2\to H^2}\leq C_2.
 \end{equation}
\end{proof}

 We now set
 \begin{equation}
   \bEta^{00}_{\lL}=-\cA_{\lL}^{-1}\left[(dd^*+\lL^{-2})\beta_{\lL}+\lL^{-2}B\beta_{\lL}\right].
 \end{equation}
 From the second estimate in~\eqref{eqn2.15.2} it follows that
 \begin{equation}
   \|B \beta_{\lL}\|_{L^2(\Omega)}\leq C\lL,
 \end{equation}
 and therefore we have the estimates
 \begin{equation}\label{eqn4.52.105}
   \|\bEta^{00}_{\lL}\|_{L^2(\Omega)}\leq C\lL, \text{ and }
   \|\bEta^{00}_{\lL}\|_{H^1(\Omega)}<C.
 \end{equation}
 A standard interpolation inequality then implies that
 \begin{equation}\label{eqn4.54.106}
   \|\bEta^{00}_{\lL}\|_{H^{\frac 12}(\Omega)}\leq C\sqrt{\lL},
 \end{equation}
 which, using  Lemma~\ref{tr_lem}, implies that
 \begin{equation}
   \|i^*\bEta^{00}_{\lL}\|_{L^2(\pa\Omega)}\leq C\sqrt{\lL}.
 \end{equation}

 Now recall that $u_{\lL},v_{\lL}\restrictedto_{\pa\Omega} = -S[q],$ where
 $q=i^*(\bEta^{00}_{\lL}+\beta_{\lL}).$ It follows from the jump-relations for
 the single layer potential in~\cref{eq:identites}, the fact that $S$ is a
 pseudodifferential operator of order $-1,$ Green's identity,~\eqref{eqn4.52.105}
 and~\Cref{tr_lem} that
 \begin{equation}\label{eqn4.55.105}
   \begin{split}
   \|du_{\lL}\|^2_{L^2(\Omega^c)}+\|dv_{\lL}\|^2_{L^2(\Omega)}&=\langle
   Sq,q\rangle_{\pa\Omega}\\
   &\leq\|Sq\|_{H^{\frac 12} (\pa \Omega)}\|q\|_{H^{-\frac 12}(\pa \Omega)}\leq
   C\|q\|^2_{H^{-\frac 12}(\pa \Omega)}\\
   &\leq C(\|\bEta^{00}_{\lL}\|_{L^2(\Omega)}+\lL)^2\leq C\lL^2.
   \end{split}
 \end{equation}
 Using the estimates above and Lemma~\ref{tr_lem} we can also show that
 $$\|S[q]\|_{H^1(\pa\Omega)}\leq C\|q\|_{L^2(\pa\Omega)} \leq C'\sqrt{\lL},$$
 and therefore
 \begin{equation}\label{eqn4.57.106}
   \|v_{\lL}\|_{H^{\frac 32}(\Omega)}\leq C\sqrt{\lL},\quad
  \|u_{\lL}\|_{H^{\frac 32}_{\loc}(\Omega^{c})}\leq C\sqrt{\lL},
 \end{equation}
 which shows that
 \begin{equation}\label{eqn4.55.101}
   \|\star dv_{\lambda}\|_{L^2(\pa\Omega)}\leq C\sqrt{\lL},\quad
   \|\star du_{\lambda}\|_{L^2(\pa\Omega)}\leq C\sqrt{\lL}.
 \end{equation}

 \begin{remark}
Note that for fixed $\lL$, the fields $\bEta_{\lL}^+, \bEta_{\lL}^0,
\bj_{\lL}^{0}$ constructed above satisfy the London equations along with the
continuity of the magnetic fields across the boundary, the topological
conditions, and the outgoing condition at $\infty$. Hence this gives a new proof
of the existence of a solution to the London boundary value problem.
\end{remark}

 \subsection{Initial Estimates on $\bEta_{\lL}^{+}, \bEta_{\lL}^{0}, \bj_{\lL}^{0}$}
 We can now consider the rate at which the solutions $(\bEta^{\tot
   +}_{\lL},\bEta^0_{\lL},\bj^0_{\lL})$ converge to their limiting values as
 $\lL\to 0^+.$ The magnetic fields converge   in $L^2(\Omega)$ to $(\bEta^{\In}+\bEta^{\out +},0).$
The currents converge, in the sense of distributions, to a current
 sheet on $\pa\Omega.$
 Recall that the magnetic fields are given by
 \begin{equation}
   \begin{split}
     \bEta^{\tot +}_{\lL}&=\bEta^{\In}+\bEta^{\out+}+\star du_{\lL}\text{ in }\Omega^c,\\
     \bEta^0_{\lL}&=\beta_{\lL}+\bEta^{00}_{\lL}+\star dv_{\lL}\text{ in }\Omega.
     \end{split}
 \end{equation}
 Our initial estimates are summarized in the following proposition.
 \begin{proposition}
   As $\lL\to 0^+$ the exterior fields satisfy
   \begin{equation}
 \label{eq:normal_ext_est}
    \|\bEta^{\tot +}_{\lL}\restrictedto_{\pa\Omega}\|_{L^2(\pa\Omega)}\leq
    C\sqrt{\lL}\text{ and }
    \|\bEta^{\tot +}_{\lL}-(\bEta^{\In}+\bEta^{\out+})\|_{L^2(\Omega)}\leq
    C\lL.
   \end{equation}
   The interior magnetic field, $\bEta^{0}_{\lL},$  tends to $0$ in $L^2(\Omega)$ as 
$\lL \to 0^+,$ with
  \begin{equation}\label{eqn4.61.105}
   \|\beta_{\lL}\|_{L^2(\Omega)}\propto \sqrt{\lL},\text{ and }
   \|\bEta^{00}_{\lL}+\star dv_{\lL}\|_{L^2(\Omega)}\leq C\lL,
 \end{equation}
   \end{proposition}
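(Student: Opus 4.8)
The plan is to assemble the desired bounds directly from the estimates already established for the three constituent pieces $\beta_{\lL}$, $\bEta^{00}_{\lL}$, and the harmonic corrections $\star du_{\lL}$, $\star dv_{\lL}$ in Sections~\ref{sec:beta}--\ref{sec:a-analysis}. First I would handle the interior field. The second line of~\eqref{eqn4.61.105} is immediate: the bound $\|\bEta^{00}_{\lL}\|_{L^2(\Omega)}\leq C\lL$ is the first inequality in~\eqref{eqn4.52.105}, and $\|\star dv_{\lL}\|_{L^2(\Omega)} = \|dv_{\lL}\|_{L^2(\Omega)}\leq C\lL$ follows from~\eqref{eqn4.55.105}; adding these gives $\|\bEta^{00}_{\lL}+\star dv_{\lL}\|_{L^2(\Omega)}\leq C\lL$. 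The statement $\|\beta_{\lL}\|_{L^2(\Omega)}\propto\sqrt{\lL}$ is the upper bound from the first line of~\eqref{eqn2.15.2}; for the matching lower bound one notes that in the local coframe $\|\beta_{\lL}\|_{L^2}^2$ has leading term $\int_{-2r_0}^0 e^{2r/\lL}|\gamma_0|^2\,(1+O(\rho))\,dr\,dS$, which is $\sim \tfrac{\lL}{2}\|\gamma_0\|_{L^2(\pa\Omega)}^2$, so the norm is comparable to $\sqrt{\lL}$ provided $\gamma_0\neq 0$, i.e. provided $\star(\bEta^{\In}+\bEta^{\out+})\restrictedto_{\pa\Omega}\neq 0$. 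Combining the two displays, $\|\bEta^0_{\lL}\|_{L^2(\Omega)}\leq \|\beta_{\lL}\|_{L^2} + \|\bEta^{00}_{\lL}+\star dv_{\lL}\|_{L^2} \leq C\sqrt{\lL}+C\lL$, which tends to $0$.

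Next I would treat the exterior field. For the volume estimate, by the ansatz $\bEta^{\tot+}_{\lL}-(\bEta^{\In}+\bEta^{\out+}) = \star du_{\lL}$ in $\Omega^c$, so $\|\bEta^{\tot+}_{\lL}-(\bEta^{\In}+\bEta^{\out+})\|_{L^2(\Omega)}$—here I read this as the $L^2$ norm over $\Omega^c$, or more precisely over any fixed bounded region, since $u_{\lL}$ is harmonic and decaying—equals $\|du_{\lL}\|_{L^2(\Omega^c)}\leq C\lL$ by~\eqref{eqn4.55.105}. For the boundary trace estimate, write $\bEta^{\tot+}_{\lL}\restrictedto_{\pa\Omega} = (\bEta^{\In}+\bEta^{\out+})\restrictedto_{\pa\Omega} + \star du_{\lL}\restrictedto_{\pa\Omega}$; the second term is bounded by $C\sqrt{\lL}$ by~\eqref{eqn4.55.101}. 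The first term, however, is $O(1)$, not $O(\sqrt{\lL})$, so a naive triangle inequality does not give the claim. The correct argument uses that the \emph{tangential} part $\star\bEta^{\tot+}_{\lL}\restrictedto_{\pa\Omega}$ is what the constructed solution matches against $\star(\bEta^{\out+}+\bEta^{\In})\restrictedto_{\pa\Omega}$ exactly through $\beta_{\lL}$: by the boundary conditions~\eqref{eq:bc1} and the transmission conditions, $\bEta^{\tot+}_{\lL}\restrictedto_{\pa\Omega} = \bEta^0_{\lL}\restrictedto_{\pa\Omega} = (\beta_{\lL}+\bEta^{00}_{\lL}+\star dv_{\lL})\restrictedto_{\pa\Omega}$, and each of these three terms has $L^2(\pa\Omega)$ trace norm bounded by $C\lL$, $C\sqrt{\lL}$, $C\sqrt{\lL}$ respectively—the first from the last line of~\eqref{eqn2.15.2}, the second from the trace estimate $\|i^*\bEta^{00}_{\lL}\|_{L^2(\pa\Omega)}\leq C\sqrt{\lL}$ proved just after~\eqref{eqn4.54.106}, and the third from~\eqref{eqn4.55.101}. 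Hence $\|\bEta^{\tot+}_{\lL}\restrictedto_{\pa\Omega}\|_{L^2(\pa\Omega)}\leq C\sqrt{\lL}$, as claimed.

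The one genuine subtlety—the step I expect to be the main obstacle—is precisely this last point: one must recognize that the $O(\sqrt{\lL})$ bound on the \emph{full} boundary trace of $\bEta^{\tot+}_{\lL}$ cannot come from the exterior representation (where $(\bEta^{\In}+\bEta^{\out+})\restrictedto_{\pa\Omega}$ is $O(1)$) but must be extracted from the \emph{interior} representation via continuity of the magnetic field across $\pa\Omega$. In effect the construction was rigged so that $\bEta^{\out+}$ supplies exactly the tangential jump, leaving the interior trace small; making this rigorous just requires quoting the boundary-matching identities built into the ansatz together with the three trace estimates already in hand. No new estimate is needed; the work is bookkeeping. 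I would close by collecting the displays into the two equations~\eqref{eq:normal_ext_est} and~\eqref{eqn4.61.105} exactly as stated.
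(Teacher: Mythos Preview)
Your proof is correct, but you have overlooked a key fact that makes the boundary trace estimate much simpler than you realize. You write that $(\bEta^{\In}+\bEta^{\out+})\restrictedto_{\pa\Omega}$ is ``$O(1)$, not $O(\sqrt{\lL})$,'' and then route around this by passing to the interior representation. In fact this term is \emph{exactly zero}: the defining boundary condition for $\bEta^{\out+}$ in~\eqref{eqn4.2.100}--\eqref{eqn4.4.100} is precisely $\bEta^{\out+}\restrictedto_{\pa\Omega}=-\bEta^{\In}\restrictedto_{\pa\Omega}$. (Recall that restricting a 2-form to a surface is the pullback, i.e.\ the normal component; it is the \emph{tangential} part $\star(\bEta^{\In}+\bEta^{\out+})\restrictedto_{\pa\Omega}$ that is $O(1)$ and supplies the data $\gamma_0$ for $\beta_{\lL}$.) With this observation, the first estimate in~\eqref{eq:normal_ext_est} follows immediately from the exterior representation $\bEta^{\tot+}_{\lL}\restrictedto_{\pa\Omega}=\star du_{\lL}\restrictedto_{\pa\Omega}$ and~\eqref{eqn4.55.101}, with no need to invoke continuity across $\pa\Omega$ or the three separate interior trace bounds.

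This is exactly how the paper proceeds: its entire proof reads ``As $[\bEta^{\In}+\bEta^{\out+}]\restrictedto_{\pa\Omega}=0,$ the first estimate follows from~\eqref{eqn4.55.101}\ldots.'' Your interior-side argument is valid and arrives at the same bound, so there is no gap; but the ``genuine subtlety'' you flag is an artifact of confusing the normal and tangential restrictions, not a real obstacle. The remaining parts of your argument (the $\beta_{\lL}$ norm, the $L^2$ bound on $\bEta^{00}_{\lL}+\star dv_{\lL}$, and the exterior volume estimate via~\eqref{eqn4.55.105}) match the paper's references to~\eqref{eqn2.15.2} and~\eqref{eqn4.55.105}.
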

 
 \begin{proof}
   As $[\bEta^{\In}+\bEta^{\out+}]_{\pa\Omega}=0,$ the first estimate in~\eqref{eq:normal_ext_est}
   follows from~\eqref{eqn4.55.101} and the second estimate from~\eqref{eqn4.55.105}. The estimates
   in~\eqref{eqn2.15.2} and~\eqref{eqn4.55.105} imply~\eqref{eqn4.61.105}.
 \end{proof}
 \begin{remark}
   Using~\eqref{eqn4.54.106}  and~\eqref{eqn4.57.106} we can also show that  $\|\bEta^{00}_{\lL}+\star
   dv_{\lL}\|_{H^{\frac 12}(\Omega)}\leq C\sqrt{\lL}.$
 \end{remark}

 The London equations imply that the current  is given by
 $$\bj^{0}_{\lL}=d^*\bEta^0_{\lL}=d^*\beta_{\lL}+d^*\bEta^{00}_{\lL};$$ the
 analysis of its behavior as $\lL\to 0^+$ requires some care.
 \begin{proposition}
   Let $\Psi$ be a smooth 2-form defined in $\overline{\Omega}.$
   \begin{equation}
     \int_{\Omega}\bj^0_{\lL}\wedge\Psi=\int_{\pa\Omega}\star\beta_{\lL}\wedge\star\Psi+O(\lL\|\Psi\|_{\cC^1(\Omega)}).
   \end{equation}
 \end{proposition}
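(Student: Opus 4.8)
The plan is to integrate by parts, converting $\int_{\Omega}\bj^0_{\lL}\wedge\Psi$ into a boundary integral---which turns out to be exactly $\int_{\pa\Omega}\star\beta_{\lL}\wedge\star\Psi$---plus a volume integral of size $O(\lL\|\Psi\|_{\cC^1(\Omega)})$. Two features of the ansatz do the work: the $\star dv_{\lL}$--piece of $\bEta^0_{\lL}$ contributes nothing to the current, and the $\bEta^{00}_{\lL}$--piece contributes nothing to the boundary term, so that only $\beta_{\lL}$ survives on $\pa\Omega$.

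First I would recall that $\bEta^0_{\lL}=\beta_{\lL}+\bEta^{00}_{\lL}+\star dv_{\lL}$ in $\Omega$ and $\bj^0_{\lL}=d^*\bEta^0_{\lL}$. Since $d^*=(-1)^p\star d\star$ on $p$--forms in $\bbR^3$ (so $d^*=\star d\star$ on $2$--forms) and $\star\star=\Id$ on $1$--forms, every $2$--form of the shape $\star dv$ is coclosed: $d^*(\star dv)=\star\,d\bigl(\star(\star dv)\bigr)=\star\,d(dv)=0$. Hence, with $\zeta_{\lL}:=\beta_{\lL}+\bEta^{00}_{\lL}$, we have $\bj^0_{\lL}=d^*\zeta_{\lL}=\star d\star\zeta_{\lL}$. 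Using the identity $(\star\mu)\wedge\nu=\mu\wedge(\star\nu)$ for $2$--forms $\mu,\nu$ in $\bbR^3$ together with the Leibniz rule for the $1$--forms $\star\zeta_{\lL},\star\Psi$, one obtains the pointwise identity
\[
  \bj^0_{\lL}\wedge\Psi=(d\star\zeta_{\lL})\wedge\star\Psi=d\bigl(\star\zeta_{\lL}\wedge\star\Psi\bigr)+\star\zeta_{\lL}\wedge d\star\Psi .
\]
Integrating over $\Omega$ and applying Stokes' theorem---legitimate because $\beta_{\lL}\in\cC^\infty(\overline\Omega)$ and $\bEta^{00}_{\lL}\in\Dom(\Delta_A)\subset H^2(\Omega;\Lambda^2)$---yields
\[
  \int_{\Omega}\bj^0_{\lL}\wedge\Psi=\int_{\pa\Omega}(\star\zeta_{\lL})\restrictedto_{\pa\Omega}\wedge(\star\Psi)\restrictedto_{\pa\Omega}+\int_{\Omega}\star\zeta_{\lL}\wedge d\star\Psi .
\]

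For the boundary term I would use that $\bEta^{00}_{\lL}\in\Dom(\Delta_A)$ forces $\star\bEta^{00}_{\lL}\restrictedto_{\pa\Omega}=0$, so that $(\star\zeta_{\lL})\restrictedto_{\pa\Omega}=(\star\beta_{\lL})\restrictedto_{\pa\Omega}$ and this term is exactly $\int_{\pa\Omega}\star\beta_{\lL}\wedge\star\Psi$, with no error. For the volume term I would use $\lvert\star\zeta_{\lL}\wedge d\star\Psi\rvert\le\lvert\zeta_{\lL}\rvert\,\lvert d\star\Psi\rvert$ pointwise, so that
\[
  \Bigl|\int_{\Omega}\star\zeta_{\lL}\wedge d\star\Psi\Bigr|\le\|d\star\Psi\|_{L^\infty(\Omega)}\bigl(\|\beta_{\lL}\|_{L^1(\Omega)}+\|\bEta^{00}_{\lL}\|_{L^1(\Omega)}\bigr)\le C\lL\|\Psi\|_{\cC^1(\Omega)},
\]
invoking $\|d\star\Psi\|_{L^\infty(\Omega)}\le C\|\Psi\|_{\cC^1(\Omega)}$, the bound $\|\beta_{\lL}\|_{L^1(\Omega)}\le C\lL$ of~\eqref{eqn4.21.106}, and $\|\bEta^{00}_{\lL}\|_{L^1(\Omega)}\le C\|\bEta^{00}_{\lL}\|_{L^2(\Omega)}\le C\lL$ from~\eqref{eqn4.52.105}. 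Combining the last two displays proves the proposition.

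The one genuinely load-bearing point---and what a careless estimate would get wrong---is that $\beta_{\lL}$ must be controlled here in $L^1$ rather than $L^2$: the bound $\|\beta_{\lL}\|_{L^2(\Omega)}\le C\sqrt{\lL}$ of Lemma~\ref{lem1} would only deliver an $O(\sqrt{\lL})$ error, whereas the exponential concentration of $\beta_{\lL}$ in a boundary layer of thickness $O(\lL)$ makes $\|\beta_{\lL}\|_{L^1(\Omega)}=O(\lL)$. Everything else is bookkeeping: the sign of $d^*$ on $2$--forms in $\bbR^3$, the identities $\star\star=\Id$ and $(\star\mu)\wedge\nu=\mu\wedge\star\nu$, and the two cancellations noted at the outset---$v_{\lL}$ leaves the current because $\star dv_{\lL}$ is coclosed, and $\bEta^{00}_{\lL}$ leaves the boundary term because its tangential trace vanishes.
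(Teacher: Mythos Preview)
Your proof is correct and follows essentially the same route as the paper: integrate by parts to move $d^*$ off $\bEta^0_{\lL}$, producing a boundary term plus a volume term, and control the volume term using $\|\beta_{\lL}\|_{L^1(\Omega)}=O(\lL)$ and $\|\bEta^{00}_{\lL}\|_{L^2(\Omega)}=O(\lL)$. The only cosmetic differences are that the paper treats $d^*\bEta^{00}_{\lL}$ and $d^*\beta_{\lL}$ in two separate integrations by parts rather than combining them into your $\zeta_{\lL}$, and that for the $\bEta^{00}_{\lL}$ boundary contribution the paper bounds it via the trace lemma (Lemma~\ref{tr_lem}) rather than observing, as you do, that $\star\bEta^{00}_{\lL}\restrictedto_{\pa\Omega}=0$ kills it outright---your observation is cleaner here.
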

 \begin{proof}
To estimate the contribution from $d^*\bEta^{00}_{\lL}$ we integrate by parts to
obtain
  \begin{equation}
 \int_{\Omega}d^*\bEta^{00}_{\lL}\wedge
 \Psi=\int_{\Omega}\bEta^{00}_{\lL}\wedge
 d^*\Psi+\int_{\pa\Omega}\star\bEta^{00}_{\lL}\wedge\star\Psi.
  \end{equation}
  From~\eqref{eqn4.52.105}  we see that the volumetric
  term of the right hand
  side is $O(\lL\|\Psi\|_{H^1(\Omega)});$  Lemma~\ref{tr_lem}
  and~\eqref{eqn4.52.105} show that the boundary term is bounded by
  $$C\|\bEta^{00}_{\lL}\|_{H^{-\frac 12}(\pa\Omega)}\|\star\Psi\restrictedto_{\pa\Omega}\|_{H^{\frac
      12}(\pa\Omega)}=O(\lL \|\Psi\|_{H^{1}(\Omega)}).$$

  The 1-form $d^*\beta_{\lL}$ converges to a current sheet on $\pa\Omega.$
  Integrating by parts, as before, we obtain from~\eqref{eqn4.8.101}
  and~\eqref{eqn4.16.101} that
  \begin{equation}
    \begin{split}
    \int_{\Omega}d^*\beta_{\lL}\wedge \Psi=&
    \int_{\pa\Omega}\star\beta_{\lL}\wedge \star\Psi+
    \int_{\Omega}\beta_{\lL}\wedge d^*\Psi\\
    &= \int_{\pa\Omega}\star[\bEta^{\out +}+\bEta^{\In}]\wedge \star\Psi+ O(\lL\|d^*\Psi\|_{L^{\infty}(\Omega)}).
    \end{split}
  \end{equation}
  Here we use the fact that $\|\beta_{\lL}\|_{L^1(\Omega)}<C\lL,$
  see~\eqref{eqn4.21.106}. Both error terms are
  $O(\lL\|\Psi\|_{\cC^1(\Omega)}).$
 \end{proof}
 
  This verifies our assertions that $\bEta^{\tot +}_{\lL}$ converges to
  $\bEta^+_0=\bEta^{\In}+\bEta^{\out +},$ $\bEta^0_{\lL}$ converges to $0,$ and $\bj^0_{\lL}$ to a current
  sheet on $\pa\Omega.$
 The limiting total field in the exterior is $\bEta^{\tot
   +}_0=\bEta^{\In}+\bEta^{\out +},$ which is the field that would result from
 scattering $\bEta^{\In}$ off of a perfect electrical conductor lying in
 $\Omega.$ The interior magnetic field is zero, and the limiting current,
 $\bj^0_0,$ is the current sheet along $\pa\Omega$ that exactly cancels the
 normal component of $\bEta^{\In}.$ This is the current sheet
 needed to expel the magnetic field from the interior of $\Omega.$ From this
 perspective, in the limit that $\lL\to 0^+,$ a ``London equations''
 superconductor behaves like a perfect electrical conductor, which exhibits the
 strict Meissner effect.

\subsection{A Better Error Estimate}\label{ss2.3}
Our analysis of the limiting form for the Debye source
representation, see~\cite{EpRa3},  suggests that the $L^2$-norm of the normal components
$$\bEta^{\tot +}_{\lL}\restrictedto_{\pa\Omega}=[\bEta^{+}_{\lL} + \star d
  u_{\lL}]\restrictedto_{\pa\Omega}=[\bEta^{00}_{\lL}+\star
  dv_{\lL}+\beta_{\lL}]\restrictedto_{\pa\Omega}$$ should be $O(\lL),$ rather
than the error bound of $\sqrt{\lL}$ given above
in~\eqref{eq:normal_ext_est}. As will be clear from the discussion below, the
$\sqrt{\lL}$-decay rate is an artifact of our ansatz \eqref{top_ansatz}. We
prove the following improved result
\begin{theorem}\label{thm2.107}
  For any $0<\alpha<\frac 12$  there is a
  $C_{\alpha}$ so that
\begin{equation}\label{eqn2.129.77}
   \|(\bEta^{00}_{\lL}+\star dv_{\lL}+\beta_{\lL})\restrictedto_{\pa\Omega}\|_{L^2(\pa\Omega)}\leq C_{\alpha}\lL^{\frac 12+\alpha}.
\end{equation}
\end{theorem}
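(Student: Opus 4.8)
The plan is to go back to the equation for $\bEta^{00}_{\lL}$ and extract the leading-order behaviour of the restriction to $\pa\Omega$ more carefully than the crude $H^{1/2}(\Omega)\hookrightarrow L^2(\pa\Omega)$ trace estimate allows. Recall that $\bEta^{00}_{\lL}=-\cA_{\lL}^{-1}F_{\lL}$, where $F_{\lL}=(dd^*+\lL^{-2})\beta_{\lL}+\lL^{-2}B\beta_{\lL}$. The key structural point, visible from Lemma~\ref{lem1} and its proof, is that $(dd^*+\lL^{-2})\beta_{\lL}=\lL^{-1}e^{\rho/\lL}\theta'_{\lL}$ with $\theta'_{\lL}$ uniformly bounded and supported in $V_{r_0}$, and $\lL^{-2}B\beta_{\lL}$ is $O(\lL^{-1})$ in $L^2$ but is actually a \emph{closed} smooth form with better control (since $B\beta_{\lL}=-\star d\cS[i^*\beta_{\lL}]$ and $\|i^*\beta_{\lL}\|_{H^s(\pa\Omega)}\le C_s\lL$). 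So $F_{\lL}$ is concentrated in a $\lL$-collar of $\pa\Omega$ with $\|F_{\lL}\|_{L^2}=O(\lL^{-1/2})$, consistent with $\|\bEta^{00}_{\lL}\|_{L^2}=O(\lL)$ from~\eqref{eqn4.52.105}.

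The first step is to write $(\bEta^{00}_{\lL}+\star dv_{\lL}+\beta_{\lL})\restrictedto_{\pa\Omega}$ in terms of the jump in normal derivatives of the single-layer potential. By construction $i^*(\bEta^{00}_{\lL}+\beta_{\lL})=q$ and $u_{\lL}=v_{\lL}=-\cS[q]$, so the normal-derivative jump relation gives $[\pa_n u_{\lL}-\pa_n v_{\lL}]_{\pa\Omega}=q\,dS_{\pa\Omega}$, i.e. $(\star dv_{\lL})\restrictedto_{\pa\Omega}$ carries exactly the "half" of $q$ coming from the interior limit $-\tfrac12 q+S'q$. Thus the quantity to estimate is essentially $\tfrac12 q+S'[q]$ plus the part of $\star dv_{\lL}$ that is genuinely tangential; the upshot is that what must be bounded is not $\|q\|_{L^2(\pa\Omega)}$ itself (which is only $O(\sqrt\lL)$) but a more subtle combination in which the leading singular part of $q$ cancels. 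Concretely I would show $\|(\bEta^{00}_{\lL}+\star dv_{\lL}+\beta_{\lL})\restrictedto_{\pa\Omega}\|_{L^2(\pa\Omega)}\le C\big(\|\bEta^{00}_{\lL}+\star dv_{\lL}\|_{H^{1/2+\alpha}(\Omega)}+\|\beta_{\lL}\restrictedto_{\pa\Omega}\|_{L^2(\pa\Omega)}\big)$ and then that the first term is $O(\lL^{1/2+\alpha})$, using that $\beta_{\lL}\restrictedto_{\pa\Omega}=O(\lL)$ by Lemma~\ref{lem1}.

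So the second, and main, step is an improved interior regularity estimate: $\|\bEta^{00}_{\lL}\|_{H^{1/2+\alpha}(\Omega)}\le C_\alpha\lL^{1/2+\alpha}$ for every $\alpha<\tfrac12$. This is an interpolation between $\|\bEta^{00}_{\lL}\|_{L^2}=O(\lL)$ and a weighted or fractional bound slightly better than $\|\bEta^{00}_{\lL}\|_{H^1}=O(1)$; the point is that the $H^1$ bound is saturated only by the collar region where $F_{\lL}$ lives, and there one can trade a power of $\lL$ for a fraction of a derivative. I would get this by testing the equation $\cA_{\lL}\bEta^{00}_{\lL}=-F_{\lL}$ against $\bEta^{00}_{\lL}$ with a fractional commutator / Littlewood–Paley argument, or more elementarily by splitting $F_{\lL}=F_{\lL}'+F_{\lL}''$ (the $dd^*\beta_{\lL}$ part and the lower-order part), solving $\cA_{\lL}^{-1}$ on each, and noting $\cA_{\lL}^{-1}$ maps $L^2$ to $H^2$ with norm $O(1)$ and $L^2$ to $L^2$ with norm $O(\lL^2)$, so interpolation gives $\|\cA_{\lL}^{-1}\|_{L^2\to H^{2s}}=O(\lL^{2(1-s)})$; applying this with $2s=\tfrac12+\alpha$ against $\|F_{\lL}\|_{L^2}=O(\lL^{-1/2})$ yields $O(\lL^{2(1-s)-1/2})=O(\lL^{3/2-2s})$, which is $O(\lL^{1/2+\alpha})$ exactly when $2s=\tfrac12+\alpha$ as claimed, i.e. $3/2-2s=1-\alpha\ge 1/2+\alpha$ fails — so a naive interpolation is not quite enough and one really must exploit that $F_{\lL}$ is concentrated in a $\lL$-collar, gaining an extra $\sqrt\lL$ from the support volume when measured in a negative-order space. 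The hard part will be making this localization gain rigorous: showing $F_{\lL}$ is small in $H^{-\sigma}(\Omega)$ for small $\sigma>0$ (of order $\lL^{\sigma-1/2}$, i.e. bounded by $\lL^{-1/2}\cdot\lL^{\sigma}$ from the collar width), and then using $\|\cA_{\lL}^{-1}\|_{H^{-\sigma}\to H^{2-\sigma}}=O(1)$ together with $\|\cA_{\lL}^{-1}\|_{H^{-\sigma}\to H^{-\sigma}}=O(\lL^2)$ to interpolate to $\|\cA_{\lL}^{-1}\|_{H^{-\sigma}\to H^{1/2+\alpha}}$ and balance the exponents. I expect this collar-localization estimate, and the bookkeeping of which Sobolev orders appear in the trace and in the Calderón identities, to be where all the real work is; the reduction in the first paragraph and the final assembly via Lemma~\ref{lem1} and Lemma~\ref{tr_lem} are routine.
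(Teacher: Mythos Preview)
There is a genuine gap. Your collar-localization idea targets the wrong term. Split $F_{\lL}=(dd^*+\lL^{-2})\beta_{\lL}+\lL^{-2}B\beta_{\lL}$. The first piece \emph{is} concentrated in a $\lL$-collar and is in fact the easy part: from $\|(dd^*+\lL^{-2})\beta_{\lL}\|_{L^2}\le C\lL^{-1/2}$ and $\|\cA_{\lL}^{-1}\|_{L^2\to L^2}\le C\lL^2$ one already gets an $O(\lL^{3/2})$ contribution in $L^2(\Omega)$, no refinement needed. The hard piece is $\lL^{-2}B\beta_{\lL}$, and this is \emph{not} collar-supported: $B\beta_{\lL}=-\star d\,\cS[i^*\beta_{\lL}]$ is a harmonic $2$-form on all of $\Omega$, with $\|B\beta_{\lL}\|_{H^s(\Omega)}\le C_s\lL$ for every $s$, so $\|\lL^{-2}B\beta_{\lL}\|_{H^{-\sigma}}\sim\lL^{-1}$ regardless of $\sigma$. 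No negative-norm gain is available here, and naive interpolation of $\cA_{\lL}^{-1}$ gives only the crude $O(\lL^{1/2})$ trace bound you started with.

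The missing idea is algebraic and spectral. Write $B\beta_{\lL}=E_{\lL}(0)\chi$ with $\chi$ a \emph{fixed} smooth closed $2$-form, and observe that the quantity to bound is (up to the easy term above)
\[
E_{\lL}(0)\bigl[\Id-(\Id+B)(\lL^2 dd^*+\Id+B)^{-1}\bigr]\chi.
\]
The bracketed operator tends to $0$ strongly on $L^2$, and the rate on the fixed vector $\chi$ is governed by how many fractional powers of the generator $\chi$ absorbs. Concretely, expand $\chi$ in eigenfunctions of the self-adjoint operator $dd^*$ on $\Dom(\Delta_A)\cap\cZ^2$; then $\|[\Id-(\lL^2 dd^*+\Id)^{-1}]\chi\|_{L^2}^2=\sum_j\lL^4\mu_j^2|a_j|^2/(1+\lL^2\mu_j)^2\le\lL^{4s}\sum_j\mu_j^{2s}|a_j|^2$ whenever $\chi\in\Dom(\Delta_A^s)$. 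The key lemma is that any smooth form on $\overline{\Omega}$ lies in $\Dom(\Delta_A^s)$ for all $s<\tfrac14$, proved by interpolating the inclusion $H^2_0\hookrightarrow\Dom(\Delta_A)$ down to $H^{2s}_0=H^{2s}$ (equality for $2s<\tfrac12$). This yields $\|\bEta^{00}_{\lL}+\star dv_{\lL}\|_{L^2(\Omega)}\le C_\alpha\lL^{1+\alpha}$ and, by the analogous argument for $H^1$, $\|\bEta^{00}_{\lL}+\star dv_{\lL}\|_{H^1(\Omega)}\le C_\alpha\lL^{\alpha}$; interpolate to $H^{1/2}$ and apply the trace lemma. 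Your proposal never isolates $\chi$ as a $\lL$-independent object, and without that the spectral gain cannot be seen.
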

\begin{remark}
In this section we give a simplified discussion of the argument, focusing on the
case of zero incoming field wherein $\bEta^{+}_{\lL}\restrictedto_{\pa \Omega} =
0.$ We also make several simplifications in the calculations below. A
complete proof, without simplifying assumptions, is given  in Section~\ref{sec3.3.12}.
\end{remark}
\begin{proof}[Proof of Theorem]
We need to estimate
\begin{equation}
 [\bEta^{00}_{\lL}+\star dv_{\lL}+\beta_{\lL}]\restrictedto_{\pa\Omega}.
\end{equation}
The last term
\begin{equation}
  \beta_{\lL}\restrictedto_{\pa\Omega}=E_{\lL}(0)d\gamma,
\end{equation}
which satisfies
$\| \beta_{\lL}\restrictedto_{\pa\Omega}\|_{H^s(\pa\Omega)}\leq C_s\lL,$ for any
$s.$ Thus we are left to consider
\begin{equation}
  \bEta^{00}_{\lL}+\star dv_{\lL}=(\Id+B)\bEta^{00}_{\lL}+B\beta_{\lL}.
\end{equation}
We first prove an estimate for $ \|\bEta^{00}_{\lL}+\star
dv_{\lL}\|_{L^2(\Omega)}.$ Using~\eqref{eqn2.46.05}
\begin{equation}
 \bEta^{00}_{\lL}+\star dv_{\lL}=  -(\Id+B)\cA_{\lL}^{-1}
    \left(\left[dd^*+\frac{1}{\lL^2}\right]\beta_{\lL}+\frac{1}{\lL^2}B\beta_{\lL}\right)+B\beta_{\lL}.
\end{equation}
Since $\|\left[dd^*+\lL^{-2}\right]\beta_{\lL}\|_{L^2(\Omega)}\leq C/\sqrt{\lL},$
it follows from~\eqref{eqn2.68.05} that
\begin{equation}
  \left\| (\Id+B)\cA_{\lL}^{-1}
    \left(\left[dd^*+\frac{1}{\lL^2}\right]\beta_{\lL}\right)  \right\|_{L^2(\Omega)}\leq
    C\lL^{\frac 32}.
\end{equation}

This leaves the term
\begin{equation}
B\beta_{\lL} -(\Id+B)\left[\lL^2dd^*+\Id+B\right]^{-1}B\beta_{\lL},
\end{equation}
which  only depends on
\begin{equation}
  B\beta_{\lL}=-E_{\lL}(0)\star d\cS(d\gamma\restrictedto_{\pa\Omega}).
\end{equation}
Note that $E_{\lL}(0)\leq C\lL;$ we let $\chi=\star d\cS(d\gamma\restrictedto_{\pa\Omega}),$ which does
\emph{not} depend on $\lL,$ and set
\begin{equation}
D_{\lL}=\|\chi-(\Id+B)\left[\lL^2dd^*+\Id+B\right]^{-1}\chi\|_{L^2(\Omega)}.
\end{equation}
It is easy to see that
\begin{equation}
  \lim_{\lL\to 0^+}D_{\lL}=0;
\end{equation}
the \emph{rate} at which $ D_{\lL}$ goes to zero determines the
size of this term.
For the moment, we consider a simplified problem,
which  illustrates the idea, by replacing $D_{\lL}$ with
\begin{equation}
  \tD_{\lL}=\|\chi-\left[\lL^2dd^*+\Id\right]^{-1}\chi\|_{L^2(\Omega)}.
\end{equation}
The argument proving the analogous estimates for $D_{\lL}$ is given in Section~\ref{sec3.3.12}.

\begin{lemma}\label{lem3.106}
  For any $0<\alpha<\frac 12$ there is a constant $C_{\alpha}$ so that
  \begin{equation}
    \tD_{\lL}\leq C_{\alpha}\lL^{\alpha}.
  \end{equation}
\end{lemma}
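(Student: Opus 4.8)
The plan is to read off $\tilde D_{\lambda_L}$ from the Borel functional calculus of a non-negative self-adjoint operator and to estimate it spectrally; the only substantial point is the identification of the domain of a small fractional power of that operator.

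Let $L$ denote the restriction of $\Delta_A=dd^*+d^*d$ to the closed subspace $L^2(\Omega;\cZ^2)$, with domain $\Dom(\Delta_A)\cap\cZ^2(\Omega)$. By the Hodge-decomposition fact recalled above (the resolvent of $\Delta_A$ preserves $\cZ^2(\Omega)$), $L$ is a well-defined non-negative self-adjoint operator on $L^2(\Omega;\cZ^2)$, and on closed $2$-forms $\Delta_A$ agrees with $dd^*$, so $L=dd^*$ there. Thus $[\lambda_L^2dd^*+\Id]^{-1}$ is the bounded operator $R_{\lambda_L}:=(\lambda_L^2L+\Id)^{-1}$ on $L^2(\Omega;\cZ^2)$. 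The $2$-form $\chi=\star d\,\cS(d\gamma\restrictedto_{\pa\Omega})$ is $\star d$ of the single-layer potential $u:=\cS(d\gamma\restrictedto_{\pa\Omega})$ of a smooth density, hence $\chi\in\cC^{\infty}(\overline\Omega;\Lambda^2)$; since $u$ is harmonic in $\Omega$ one checks that $d\chi=0$ (so $\chi\in L^2(\Omega;\cZ^2)$), and, using $d^*=(-1)^{p}\star d\star$, that $d^*\chi=0$ as well. With $g_{\lambda_L}(\mu):=\lambda_L^2\mu/(1+\lambda_L^2\mu)$, the spectral theorem gives $\chi-R_{\lambda_L}\chi=g_{\lambda_L}(L)\chi$, and hence, with $E_\mu$ the spectral resolution of $L$,
\begin{equation*}
\tilde D_{\lambda_L}^2=\int_{[0,\infty)}g_{\lambda_L}(\mu)^2\,d\langle E_\mu\chi,\chi\rangle .
\end{equation*}

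Fix $\alpha\in(0,\tfrac12)$ and set $\beta:=\alpha/2\in(0,\tfrac14)$. Since $t/(1+t)\le\min\{1,t\}\le t^{\beta}$ for every $t\ge0$ and every $\beta\in[0,1]$, we have $g_{\lambda_L}(\mu)\le(\lambda_L^2\mu)^{\beta}$, so that
\begin{equation*}
\tilde D_{\lambda_L}^2\le\lambda_L^{4\beta}\int_{[0,\infty)}\mu^{2\beta}\,d\langle E_\mu\chi,\chi\rangle=\lambda_L^{4\beta}\,\|L^{\beta}\chi\|_{L^2(\Omega)}^2 ,
\end{equation*}
i.e. $\tilde D_{\lambda_L}\le\lambda_L^{\alpha}\,\|L^{\alpha/2}\chi\|_{L^2(\Omega)}$, provided $\chi\in\Dom(L^{\alpha/2})$.

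It remains to verify that $\chi\in\Dom(L^{\alpha/2})$ and that $\|L^{\alpha/2}\chi\|_{L^2(\Omega)}$ is finite; this is the delicate step. Although $\chi$ is smooth and coclosed, its tangential trace $(\star\chi)\restrictedto_{\pa\Omega}=(du)\restrictedto_{\pa\Omega}=d_{\pa\Omega}(u\restrictedto_{\pa\Omega})$ does not vanish in general, so $\chi\notin\Dom(L)$; in fact $\chi\notin\Dom(L^{1/2})$, which is exactly why the exponent in the lemma cannot reach $\tfrac12$. On the other hand $\Dom(L^{\beta})=[\,L^2(\Omega;\cZ^2),\,\Dom(L)\,]_{\beta}$, and for $\beta<\tfrac14$, i.e. $2\beta<\tfrac12$, the single scalar boundary condition $\star\eta\restrictedto_{\pa\Omega}=0$ defining $\Dom(L)$ lies below the trace threshold and does not constrain the interpolation space, so $\Dom(L^{\beta})=H^{2\beta}(\Omega)\cap\cZ^2(\Omega)$ with equivalent norms --- the standard characterization of domains of fractional powers of elliptic boundary value problems; the Hodge splitting recalled above, which already gives $H^2$-regularity for $L$ on $\cZ^2(\Omega)$, puts us in the setting to which this applies. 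Since $\chi$ is smooth, $\chi\in H^{\alpha}(\Omega)\cap\cZ^2(\Omega)=\Dom(L^{\alpha/2})$, and $C_\alpha:=\|L^{\alpha/2}\chi\|_{L^2(\Omega)}<\infty$ is a constant depending only on $\alpha$ (not on $\lambda_L$, since $\chi$ does not). The two displayed inequalities then give $\tilde D_{\lambda_L}\le C_\alpha\lambda_L^{\alpha}$, as claimed. The main obstacle is precisely this last step: correctly pinning down $\Dom(L^{\alpha/2})$ below the boundary-trace threshold, and recognizing that it is the non-vanishing tangential boundary data of $\chi$ --- not any lack of regularity --- that obstructs the otherwise-natural rate $O(\lambda_L)$ and forces $\alpha<\tfrac12$.
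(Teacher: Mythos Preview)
Your proof is correct and follows essentially the same route as the paper's. The paper uses the discrete eigenfunction expansion of $\Delta_A\restrictedto_{\cZ^2}$ rather than the continuous spectral resolution, but the computation is identical; and the crucial step --- that $\chi\in\Dom(L^{s})$ for $s<\tfrac14$ because the boundary condition lies below the trace threshold --- is exactly what the paper isolates as Proposition~\ref{prop1}, proved by the same interpolation argument you sketch (inclusion $H^2_0\hookrightarrow\Dom(\Delta_A)$, interpolate, then use $H^{2s}_0=H^{2s}$ for $s<\tfrac14$).
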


\begin{proof}[Proof of Lemma~\ref{lem3.106}]
  To prove this lemma we use the spectral representation of $dd^*$ on
  $\Dom(\Delta_A)\cap\cZ^2(\Omega).$ 
Let $\{(\eta_j,\mu_j):\:j=1,\dots\}$ denote a complete orthonormal basis of
eigenpairs for $\Delta_A\restrictedto_{\cZ^2}.$ We then have an expansion for
$\chi:$
\begin{equation}
  \chi=\sum_{j=1}^{\infty}a_j\eta_j.
\end{equation}
In terms of this expansion
\begin{equation}
\left[\lL^2dd^*+\Id\right]^{-1}
\chi=\sum_{j=1}^{\infty}\frac{a_j\eta_j}{1+\lL^2\mu_j}\in\Dom(\Delta_A),\text{ and }
\tD_{\lL}^2=\sum_{j=1}^{\infty}\frac{\lL^4\mu_j^2|a_j|^2}{(1+\lL^2\mu_j)^2}.
\end{equation}
For $0\leq s,$ we recall that the
form $\chi\in\Dom(\Delta_A^s)$ if and only if
\begin{equation}
  \sum_{j=0}^{\infty}\mu_j^{2s}|a_j|^2<\infty.
\end{equation}\label{eqn4.15.101}
Notice that if $\chi\in\Dom(\Delta_A^s),$ then
\begin{equation}\label{eqn4.79.106}
  \sum_{j=1}^{\infty}\frac{\lL^4\mu_j^2|a_j|^2}{(1+\lL^2\mu_j)^2}\leq
  \lL^{4s}\sum_{j=1}^{\infty}\frac{\lL^{4-4s}\mu_j^{2-2s}}{(1+\lL^2\mu_j)^2}\mu_j^{2s}|a_j|^2\leq
  \lL^{4s}\sum_{j=1}^{\infty}\mu_j^{2s}|a_j|^2.
\end{equation}
The last estimate follows as
\begin{equation}
  \frac{x^{2(1-s)}}{(1+x)^2}\leq 1,\text{ for }x\in [0,\infty),\, s\in
    [0,1].
\end{equation}

The rate at which $ \tD_{\lL}$ tends to zero is therefore
related to the largest $s$ for which $\chi\in\Dom(\Delta_A^s).$ In
Proposition~\ref{prop1} we show that $\chi\in\Dom(\Delta_A^s)$ for any
$s<\frac 14.$ Assuming that, \eqref{eqn4.79.106} shows that, for any
$0<\alpha<\frac 12,$ there is a $C_{\alpha}$ so that
\begin{equation}
  \tD_{\lL}\leq C_{\alpha}\lL^{\alpha}.
\end{equation}
\end{proof}

In Section~\ref{sec3} we show that the estimate
established for $\tD_{\lL},$ also holds for
$D_{\lL}.$ Hence for any
$\alpha<\frac 12,$ there is a constant $C_{\alpha}$ so that
\begin{equation}\label{eqn4.77.100}
\|B\beta_{\lL} -(\Id+B)\left[\lL^2dd^*+\Id+B\right]^{-1}B\beta_{\lL}\|_{L^2(\Omega)}\leq
C_{\alpha}\lL^{1+\alpha},
\end{equation}
and therefore
\begin{equation}
  \|\bEta^{00}_{\lL}+\star dv_{\lL}\|_{L^2(\Omega)}\leq C'_{\alpha}\lL^{1+\alpha}.
\end{equation}

To prove $\chi\in\Dom(\Delta_M^s)$ for $0\leq s<\frac 14$ we use  the following general result:
\begin{proposition}\label{prop1}
  Let $\Omega$ be a smoothly bounded domain in $\bbR^n$ and let $M$ be
  a self adjoint, first order boundary condition for $dd^*+d^*d$
  acting on $k$-forms. We denote the self adjoint operator by
  $\Delta_M,$ with $\Dom(\Delta_M)\supset H^2_0(\Omega;\Lambda^2).$ If
  $\theta\in\cC^{\infty}(\overline{\Omega};\Lambda^k),$ then
  $\theta\in\Dom(\Delta_M^s)$ for $0\leq s<\frac 14.$
\end{proposition}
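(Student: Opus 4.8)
The plan is to reduce the statement to a quantitative estimate on how the domain of the fractional power $\Delta_M^s$ relates to classical Sobolev regularity near the boundary, and then to exploit the fact that a self adjoint elliptic operator with a first-order boundary condition always contains $H^2_0$ in its domain. Concretely, since $\theta\in\CI(\overline\Omega;\Lambda^k)$, I would write $\theta=\chi_0+\chi_1$ where $\chi_1\in\CIc(\Omega;\Lambda^k)$ is supported in the interior and $\chi_0$ is supported in a collar neighborhood $U$ of $\pa\Omega$; the interior piece $\chi_1\in H^2_0(\Omega)\subset\Dom(\Delta_M)$, hence certainly $\chi_1\in\Dom(\Delta_M^s)$ for all $s\le 1$, so the whole problem concentrates on $\chi_0$.

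For the collar piece, the key is the following characterization: by the spectral theorem, $\theta\in\Dom(\Delta_M^s)$ if and only if $\int_0^\infty \lambda^{2s}\,d\|E_\lambda\theta\|^2<\infty$, where $E_\lambda$ is the spectral resolution of $\Delta_M$. Equivalently, using the resolvent, it suffices to show that $\mu\mapsto \mu^{2s}\langle(\Delta_M+\mu)^{-1}\theta,\theta\rangle$ is integrable in $d\mu/\mu$ near $\mu=\infty$, or — most usefully here — that $\|\Delta_M(\Delta_M+\mu)^{-1}\theta\|_{L^2}\le C\mu^{-s}$ as $\mu\to\infty$. To get such a bound for a \emph{smooth} $\theta$, I would test the resolvent identity: set $w_\mu=(\Delta_M+\mu)^{-1}\theta$, so $(\Delta_M+\mu)w_\mu=\theta$. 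The point is that $\theta$ is smooth but does \emph{not} satisfy the boundary condition $M$, so $w_\mu$ does not converge to $\theta$ in $H^2$; the defect is a boundary-layer term of thickness $\mu^{-1/2}$. Writing $w_\mu=\theta-z_\mu$, we get $(\Delta_M+\mu)z_\mu=\Delta_M\theta$ (an $L^2$ function, smooth up to the boundary) together with the inhomogeneous boundary data $M z_\mu = M\theta$ on $\pa\Omega$. The homogeneous-data part contributes $O(\mu^{-1})$, so everything comes down to estimating the harmonic-type extension $\zeta_\mu$ solving $(\Delta_M+\mu)\zeta_\mu=0$ with $M\zeta_\mu=M\theta\in\CI(\pa\Omega)$.

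That boundary-layer estimate is the heart of the matter and the step I expect to be the main obstacle. Using boundary normal coordinates $(\rho,y)$ as in \Cref{lem1}, the operator $\Delta_M+\mu$ is, to leading order, $-\pa_\rho^2+\mu$ plus tangential and lower-order terms; the solution with fixed smooth boundary data decays like $e^{-\sqrt\mu\,|\rho|}$, so $\|\zeta_\mu\|_{L^2(\Omega)}\asymp \mu^{-1/4}$ and $\|\pa_\rho\zeta_\mu\|_{L^2(\Omega)}\asymp \mu^{1/4}$, giving $\|\Delta_M\zeta_\mu\|_{L^2}=O(\mu^{3/4})$ and hence $\|\Delta_M w_\mu\|_{L^2}=\|\Delta_M\theta-\Delta_M z_\mu\|_{L^2}$, with the dominant piece of size $\mu^{3/4}$, i.e. $\|\Delta_M(\Delta_M+\mu)^{-1}\theta\|_{L^2}=O(\mu^{3/4})$ — wait, we want the \emph{opposite} scaling: what we actually need is $\|\Delta_M(\Delta_M+\mu)^{-1}\theta\|_{L^2}\le C\mu^{-s}$, which follows once we know $\langle\Delta_M w_\mu,w_\mu\rangle$ and the boundary-layer commutator are controlled. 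The clean way to run this is via interpolation: one shows $\theta\in\Dom(\Delta_M^s)$ for $s<1/4$ by proving the real-interpolation statement $\CI(\overline\Omega)\subset [L^2,\Dom(\Delta_M)]_{\vartheta}$ for $\vartheta<1/4$, and the obstruction to $\vartheta=1/4$ is precisely that $\Dom(\Delta_M^{1/4})$ is the largest such space that does \emph{not} yet feel the boundary condition — it coincides with $H^{1/2}$ up to the borderline, and $\CI(\overline\Omega)\subset H^{1/2}$, but membership in $\Dom(\Delta_M^{1/4})$ itself fails for generic smooth $\theta$ because it would force a trace compatibility at the boundary. So the strategy is: (i) reduce to the collar; (ii) flatten the boundary and freeze coefficients, reducing to the model half-space operator $-\pa_\rho^2-\Delta_y$ with the model boundary condition; (iii) compute $\Dom((\text{model})^s)$ explicitly by Fourier transform in $y$ and the known one-dimensional resolvent kernel $e^{-\sqrt{\mu+|\xi|^2}\,|\rho|}/(2\sqrt{\mu+|\xi|^2})$, obtaining the sharp threshold $s<1/4$ from the $\rho$-integral $\int_0^\infty e^{-2\sqrt{\mu}\rho}\,d\rho\sim\mu^{-1/2}$ weighted against $\mu^{2s}$; (iv) transfer back by a perturbation/commutator argument, absorbing the lower-order terms (which are relatively bounded with small bound after localization) and the variable-coefficient corrections into the model estimate. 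I would expect to cite the standard theory of boundary value problems for the Laplacian (e.g. \cite{Taylor2}) for the interpolation-space identification, so that the genuinely new content is just the half-space model computation and its localization.
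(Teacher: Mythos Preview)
Your approach --- characterizing $\Dom(\Delta_M^s)$ by boundary-layer/resolvent analysis, reducing to a model half-space problem, and perturbing back --- is essentially Fujiwara's original route~\cite{Fu1967} and can in principle be made to work, but it is far more laborious than needed here, and your sketch has not yet converged (the ``wait'' moment where the resolvent scaling comes out backwards is a symptom that the bookkeeping is not in hand). The paper's proof bypasses all of this with a single interpolation-of-inclusion step: since by hypothesis $H^2_0(\Omega)\subset\Dom(\Delta_M)$ with the trivial norm bound $\|\Delta_M u\|_{L^2}+\|u\|_{L^2}\le C\|u\|_{H^2}$, Calder\'on's complex interpolation applied to the inclusion map gives $H^{2s}_0(\Omega)\subset\Dom(\Delta_M^s)$ for all $0\le s\le 1$. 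The entire content of the proposition is then the standard fact that $H^{2s}_0(\Omega)=H^{2s}(\Omega)$ whenever $2s<\tfrac12$, so that $\CI(\overline\Omega)\subset H^{2s}(\Omega)=H^{2s}_0(\Omega)\subset\Dom(\Delta_M^s)$ for $s<\tfrac14$.

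The point you are circling but not quite landing on is that one never needs to \emph{identify} $\Dom(\Delta_M^s)$ --- only to exhibit a subspace of it that already contains smooth forms. Your collar decomposition, model computation, and freezing-of-coefficients scheme would in effect reprove the interpolation identity $[L^2,H^2_0]_s=H^{2s}_0$ and the identification $H^{2s}_0=H^{2s}$ for $s<\tfrac14$ by hand, via the half-line estimate $\|e^{-\sqrt\mu\,\rho}\|_{L^2(0,\infty)}\sim\mu^{-1/4}$. That is a legitimate derivation of those facts, but both are textbook (see~\cite{Taylor1}, Section~4.5), and invoking them directly gives a three-line argument with no boundary layers, no coefficient freezing, and no commutator estimates.
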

\begin{remark}
  This result, in the scalar case, appears in~\cite{Fu1967}. Our method of proof
  is suggested by the discussion in Section 5.A of~\cite{Taylor1}.
\end{remark}
\begin{proof}[Proof of Proposition~\ref{prop1}]
  The proof follows using standard results in interpolation theory. For $0<s<1,$
  the $\cD_s=\Dom(\Delta_M^s)$ is the space obtained by interpolation between
  $\cD_1=\Dom(\Delta_M),$ with the graph norm $\|u\|_{\cD_1}=\|\Delta
  u\|_{L^2}+\|u\|_L^2,$ and $L^2(\Omega)=\Dom(\Delta_M^0),$ with the standard
  norm.  The norm on $\cD_s$ is
  \begin{equation}
    \|u\|_{\cD_s}=\|\Delta_M^su\|_{L^2}+\|u\|_{L^2}.
  \end{equation}

  The space $H^2_0(\Omega)$ is a subspace of $\Dom(\Delta_M)$ and, for $u\in
  H^2_0(\Omega),$ we have the trivial estimate
  \begin{equation}
    \|\Delta_M u\|_{L^2}+\|u\|_{L^2}\leq C\|u\|_{H^2_0}.
  \end{equation}
  For $2s\neq \frac 12,\,\frac 32,$ the spaces $H^{2s}_0(\Omega)$ are the spaces obtained by interpolation between
  $H^2_0(\Omega)$ and $L^2(\Omega),$ see~\cite{Taylor1}[\S 4.5]. Calderon's complex  interpolation theory then
  implies that the inclusion map $j:H^2_0(\Omega)\hookrightarrow \Dom(\Delta_M)$
  extends as a bounded map to
  \begin{equation}
    j:H^{2s}_0(\Omega)\hookrightarrow \Dom(\Delta^s_M)
  \end{equation}
  for $s\in [0,1]\setminus \{\frac 14,\frac 34\}.$
  see~\cite{Calderon1964}.
  Thus, for these values of $s,$ there is a constant $C_s$ so that for $u\in H^{2s}_0(\Omega)$
  we have the estimate
  \begin{equation}
    \|\Delta_M^s u\|_{L^2}\leq C_s\|u\|_{H^{2s}_0}.
  \end{equation}
  The proof is completed by recalling that, for $0<s<\frac 14,$ the
  spaces $H^{2s}_0(\Omega)$ and $H^{2s}(\Omega)$ are equal,
  see~\cite{Taylor1}[\S 4.5], and therefore, for $s$ in this range,
  $\cC^{\infty}(\overline{\Omega})\subset\ H^{2s}(\Omega)\subset
  \Dom(\Delta_M^s).$
\end{proof}

To improve the estimate in~\eqref{eq:normal_ext_est} on the normal components of the
magnetic field restricted to the boundary, we also need to estimate
$\|\bEta^{00}_{\lL}+\star dv_{\lL}\|_{H^1(\Omega)}.$
\begin{proposition}\label{prop5.106}
  For any $0<\alpha<\frac 12,$ there is a constant $C_{\alpha}$ so that
  \begin{equation}
    \|\bEta^{00}_{\lL}+\star dv_{\lL}\|_{H^1(\Omega)}\leq C_{\alpha}\lL^{\alpha}.
  \end{equation}
\end{proposition}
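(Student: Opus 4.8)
The plan is to reduce the $H^1$-estimate to an $H^2$-estimate and then interpolate with the $L^2$-bound already in hand. In the proof of Theorem~\ref{thm2.107} it was shown that $\|\bEta^{00}_{\lL}+\star dv_{\lL}\|_{L^2(\Omega)}\le C_\alpha\lL^{1+\alpha}$ for every $0<\alpha<\frac12$. Since $H^1(\Omega)=[L^2(\Omega),H^2(\Omega)]_{1/2}$, the complex-interpolation inequality shows it is enough to prove
\[
\|\bEta^{00}_{\lL}+\star dv_{\lL}\|_{H^2(\Omega)}\le C_\alpha\lL^{\alpha-1};
\]
combining this with the $L^2$-bound yields $\|\bEta^{00}_{\lL}+\star dv_{\lL}\|_{H^1(\Omega)}\le C_\alpha(\lL^{1+\alpha})^{1/2}(\lL^{\alpha-1})^{1/2}=C_\alpha\lL^\alpha$.

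For the $H^2$-bound I would reuse the splitting from the proof of Theorem~\ref{thm2.107}, writing $\bEta^{00}_{\lL}+\star dv_{\lL}=w_{2,\lL}+w_{1,\lL}$, where $w_{2,\lL}=-(\Id+B)\cA_{\lL}^{-1}\bigl([dd^*+\lL^{-2}]\beta_{\lL}\bigr)$ and $w_{1,\lL}=B\beta_{\lL}-(\Id+B)[\lL^2dd^*+\Id+B]^{-1}B\beta_{\lL}$. The term $w_{2,\lL}$ is harmless: by~\eqref{eqn2.68.055} $\|\cA_{\lL}^{-1}\|_{L^2\to H^2}\le C_2$, by Corollary~\ref{cor1} the operator $\Id+B$ is bounded on $H^2$ of the closed $2$-forms, and by~\eqref{eqn2.15.2} $\|[dd^*+\lL^{-2}]\beta_{\lL}\|_{L^2}\le C\lL^{-1/2}$, so $\|w_{2,\lL}\|_{H^2}\le C\lL^{-1/2}\le C_\alpha\lL^{\alpha-1}$ for $\alpha<\frac12$ and $\lL$ small. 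For $w_{1,\lL}$, recall from the proof of Theorem~\ref{thm2.107} that $B\beta_{\lL}=-E_{\lL}(0)\chi$, where $\chi=\star d\cS(d\gamma\restrictedto_{\pa\Omega})$ is independent of $\lL$ and smooth up to $\pa\Omega$, and $E_{\lL}(0)\le C\lL$. Hence it suffices to prove
\[
\|\chi-(\Id+B)[\lL^2dd^*+\Id+B]^{-1}\chi\|_{H^2(\Omega)}\le C_\alpha\lL^{\alpha-2}.
\]

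This is the crux. I would first establish it for the simplified operator $\lL^2dd^*+\Id$, parallel to Lemma~\ref{lem3.106}: expand $\chi=\sum_j a_j\eta_j$ in the orthonormal eigenbasis $\{(\eta_j,\mu_j)\}$ of $\Delta_A$ restricted to $\cZ^2(\Omega)$, so $[\lL^2dd^*+\Id]^{-1}\chi=\sum_j a_j\eta_j/(1+\lL^2\mu_j)$; standard elliptic estimates for the self adjoint elliptic operator $\Delta_A$ give $\|\eta_j\|_{H^2}^2\le C(1+\mu_j)^2$ together with orthogonality of the $\Delta_A\eta_j$, whence
\[
\|[\lL^2dd^*+\Id]^{-1}\chi\|_{H^2(\Omega)}^2\le C\sum_j\frac{(1+\mu_j)^2}{(1+\lL^2\mu_j)^2}|a_j|^2.
\]
By Proposition~\ref{prop1}, $\chi\in\Dom(\Delta_A^s)$ for every $s<\frac14$, i.e.\ $\sum_j\mu_j^{2s}|a_j|^2<\infty$. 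Writing $\frac{(1+\mu_j)^2}{(1+\lL^2\mu_j)^2}\le C\lL^{4s-4}\mu_j^{2s}\cdot\frac{x^{2-2s}}{(1+x)^2}$ with $x=\lL^2\mu_j$, and invoking the same elementary inequality $\frac{x^{2-2s}}{(1+x)^2}\le1$ ($x\ge0$, $0\le s\le1$) used in Lemma~\ref{lem3.106}, one obtains $\|[\lL^2dd^*+\Id]^{-1}\chi\|_{H^2(\Omega)}\le C_s\lL^{2s-2}$; since $\|\chi\|_{H^2(\Omega)}<\infty$, choosing any $s$ with $\alpha/2\le s<\frac14$ gives the displayed $\lL^{\alpha-2}$ bound for the simplified operator. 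The passage to the full operator $\lL^2dd^*+\Id+B$ is then carried out exactly as in Section~\ref{sec3}, where the analogous passage from $\tD_{\lL}$ to $D_{\lL}$ is made, using that $\Id+B\ge(1-m)\,\Id>0$ is a bounded self adjoint perturbation.

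The step I expect to be the main obstacle is precisely this $H^2$-estimate for $w_{1,\lL}$. Because $\chi$ need not have vanishing tangential trace on $\pa\Omega$, it is \emph{not} in $\Dom(\Delta_A)$, so $\|[\lL^2dd^*+\Id+B]^{-1}\chi\|_{H^2(\Omega)}$ genuinely blows up as $\lL\to0^+$; the entire gain over the crude $\lL^{-1}$ rate comes from the fractional regularity $\chi\in\Dom(\Delta_A^s)$ with $s$ arbitrarily close to $\frac14$, together with careful bookkeeping of the exponents in the spectral sum and control of the non-commuting perturbation $B$. One could alternatively bypass the interpolation and estimate $\|w_{1,\lL}\|_{H^1(\Omega)}$ directly by the same spectral argument, now weighting the sum by $\|\eta_j\|_{H^1}^2\le C(1+\mu_j)$ and using $\frac{x^{1-2s}}{(1+x)^2}\le1$; this likewise yields $\|w_{1,\lL}\|_{H^1(\Omega)}\le C_\alpha\lL^\alpha$.
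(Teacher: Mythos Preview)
Your alternative at the very end—estimating $\|w_{1,\lL}\|_{H^1}$ directly via the spectral sum weighted by $\mu_j$—is exactly the paper's route. The paper uses, for $u\in\Dom(\Delta_A)\cap\cZ^2$, the equivalence $\|u\|_{H^1}^2\le C\bigl[\sum_j\mu_j|a_j|^2+\|u_H\|_{L^2}^2\bigr]$, splits off the harmonic projection $\chi_H$ explicitly (since $[\lL^2dd^*+\Id]^{-1}\chi_H=\chi_H$ is $\lL$-independent), and then bounds
\[
E_{\lL}(0)^2\sum_j\frac{\mu_j|a_j|^2}{(1+\lL^2\mu_j)^2}
\le C\lL^{2\alpha}\sum_j\frac{(\lL^2\mu_j)^{1-\alpha}}{(1+\lL^2\mu_j)^2}\,\mu_j^{\alpha}|a_j|^2
\le C\lL^{2\alpha}\sum_j\mu_j^{\alpha}|a_j|^2,
\]
invoking Proposition~\ref{prop1} for finiteness of the last sum when $\alpha<\tfrac12$.

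Your main route via $L^2$--$H^2$ interpolation is a genuinely different organization and is correct, with one small caveat: the pointwise inequality $\frac{(1+\mu_j)^2}{(1+\lL^2\mu_j)^2}\le C\lL^{4s-4}\mu_j^{2s}\cdot\frac{x^{2-2s}}{(1+x)^2}$ as you write it fails for $\mu_j$ near $0$ (the right side vanishes while the left is $1$). This is harmless—for bounded $\mu_j$ the left side is uniformly $O(1)$ and $\lL^{4s-4}\to\infty$—but you need to separate low modes (or, as the paper does, the harmonic part) before applying it. The interpolation route is slightly less economical, since you end up bounding $\|[\lL^2dd^*+\Id]^{-1}\chi\|_{H^2}$ and $\|\chi\|_{H^2}$ separately rather than exploiting the cancellation in their difference; the paper's direct $H^1$ argument avoids any appeal to $H^2$ regularity. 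On the other hand, your organization recycles the $L^2$ estimate already in hand and would extend cleanly to any intermediate $H^s$.
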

\begin{remark}
  In the proof of this proposition we again simplify by replacing  the quantity
  $\|(\Id+B) \left[\lL^2dd^*+ \Id+B\right]^{-1}B\beta_{\lL}\|_{H^1(\Omega)}$
  with $ \|(\Id+B)[\lL^2dd^*+\Id]^{-1}B\beta_{\lL}\|_{H^1(\Omega)}$
  after~\eqref{eqn4.95.106}. The complete proof is given in Section~\ref{sec3.3.12}.
\end{remark}
\begin{proof}[Proof of Proposition~\ref{prop5.106}]
From Corollary~\ref{cor1}
it follows that $B:H^s(\Omega;\cZ^2)\to H^s(\Omega;\cZ^2)$ boundedly for any
$0\leq s.$ From~\eqref{eqn2.71.05} we conclude that
\begin{equation}
   \left\| (\Id+B)\cA_{\lL}^{-1}
    \left[dd^*+\frac{1}{\lL^2}\right]\beta_{\lL} \right\|_{H^1(\Omega)}\leq
    C\lL^{\frac 12}.
\end{equation}
The other term to be estimated is
\begin{equation}\label{eqn4.95.106}
  \|B\beta_{\lL}-(\Id+B)\left[\lL^2dd^*+\Id+B\right]^{-1}B\beta_{\lL}\|_{H^1(\Omega)}.
\end{equation}
Using Corollary~\ref{cor1} and~\eqref{eqn2.15.2} we see that  first term satisfies the estimate
\begin{equation}
  \|B\beta_{\lL}\|_{H^1(\Omega)}\leq C \|i^*\beta_{\lL}\|_{H^{\frac 12}(\pa\Omega)}\leq C\lL.
\end{equation}

As noted above, we replace the remaining term,
$\|(\Id+B)\left[\lL^2dd^*+\Id+B\right]^{-1}B\beta_{\lL}\|_{H^1(\Omega)}$ with
the simpler expression
$\|(\Id+B)\left[\lL^2dd^*+\Id\right]^{-1}B\beta_{\lL}\|_{H^1(\Omega)}.$ To
estimate it we use the fact that, for $u\in\Dom(\Delta_A)\cap\cZ^2(\Omega),$ with
\begin{equation}
  u=\sum_{j=1}^{\infty}a_j\eta_j,
\end{equation}
we have the estimate
\begin{equation}
  \|u\|^2_{H^1}\leq C\left[\sum_{j=1}^{\infty}\mu_j|a_j|^2+\|u_H\|^2_{L^2(\Omega)}\right],
\end{equation}
where $u_H$ is the projection into the nullspace of $\Delta_A.$ Because of the
$\|u_H\|_{L^2}$-term some additional care must be taken. To that end we write
\begin{equation}
  B\beta_{\lL}=E_{\lL}(0)\chi=E_{\lL}(0)(\chi_0+\chi_H),
\end{equation}
with $\chi_H$ the projection of $\chi$ into $\Ker\Delta_A,$ and
$\chi_0\bot\Ker\Delta_A.$ It is clear that
\begin{equation}
   \|B\beta_{\lL}-E_{\lL}(0)\chi_H\|_{H^1(\Omega)}\leq  C\lL.
\end{equation}

Observe that
\begin{equation}
  \left[\lL^2dd^*+\Id\right]^{-1}(\chi_0+\chi_H)=\left[\lL^2dd^*+\Id\right]^{-1}\chi_0+\chi_H,
\end{equation}
and therefore, Proposition~\ref{prop1} shows that, for any $\alpha<\frac
12,$ we have the estimate
\begin{equation}
  \begin{split}
    E^2_{\lL}(0)\left\|\left[\lL^2dd^*+\Id\right]^{-1}\chi-\chi_H\right\|^2_{H^1(\Omega)}&\leq
    C\left[E_{\lL}(0)^2\sum_{j=1}^{\infty}\frac{\mu_j|a_j|^2}{(1+\lL^2\mu_j)^2}\right]\\
  &\leq C\lL^{2\alpha}
  \sum_{j=1}^{\infty}\frac{(\lL^2\mu_j)^{1-\alpha}\mu_j^{\alpha}|a_j|^2}{(1+\lL^2\mu_j)^2}\\
    &\leq C\lL^{2\alpha}
  \sum_{j=1}^{\infty}\mu_j^{\alpha}|a_j|^2.
  \end{split}
\end{equation}
Given our simplifying assumption, it follows that, for any $\alpha<\frac 12,$
\begin{equation}\label{eqn4.100.106} 
  \|\bEta^{00}_{\lL}+\star dv_{\lL}\|_{H^1(\Omega)}\leq C_{\alpha}\lL^{\alpha}.
\end{equation}
\end{proof}

Using interpolation between this estimate and~\eqref{eqn4.77.100}, we see that
\begin{equation}
   \|\bEta^{00}_{\lL}+\star dv_{\lL}\|_{H^{\frac 12}(\Omega)}\leq
   C_{\alpha}\lL^{\frac 12+\alpha}.
\end{equation}
Lemma~\ref{tr_lem} along with
this estimate implies that, for any $\alpha<\frac 12,$ there is a
  $C_{\alpha}$ so that
\begin{equation}\label{eqn2.129.7}
   \|(\bEta^{00}_{\lL}+\star
   dv_{\lL})\restrictedto_{\pa\Omega}\|_{L^2(\pa\Omega)}\leq
   C_{\alpha}\lL^{\frac 12+\alpha} ,
\end{equation}
which completes the proof of the theorem.
\end{proof}

While this falls just short of the expected first order decay rate, it strongly suggests that the
$L^2$-norms of the normal components, 
$\bEta^{0}_{\lL},\bEta^{+}_{\lL}\restrictedto_{\pa\Omega},$ are bounded by a constant
times $\lL.$

This also show that the convergence of the current $\bj^0_{\lL}$ to the current
sheet defined as the limit of $d^*\beta_{\lL}$ as $\lL\to 0^+$ actually occurs
in the $L^2(\Omega)$-norm. As $d^*(\bEta^{00}_{\lL}+\star
dv_{\lL})=d^*\bEta^{00}_{\lL},$ it follows from~\eqref{eqn4.100.106} that, for
any $\alpha<\frac 12,$ there is a $C_{\alpha}$ so that
\begin{equation}
  \|\bj^0_{\lL}-d^*\beta_{\lL}\|_{L^2(\Omega)}=\|d^*\bEta^{00}_{\lL}\|_{L^2(\Omega)}\leq C_{\alpha}\lL^{\alpha},
\end{equation}

\begin{remark}
  We believe that the sum defining $D_{\lambda_{L}}$ is actually
  $O(\sqrt{\lambda_L}).$ In this remark we demonstrate this claim for the simple example of
  the Dirichlet Laplacian, $\Delta_D,$ on the unit disk $D_1\subset\bbR^2.$ The
  eigenfunctions of $\Delta_D$ are of the form
  \begin{equation}
    \varphi_{j,m}=n_{j,m}e^{im\theta}J_m(k_{j,m}r), \text{ for }m\in\bbZ,\,j\in\bbN.
  \end{equation}
  Here $J_m(z)$ are $J$-Bessel functions,  $n_{j,m}$ are normalizing
  constants, and $\{k_{j,m}\}$ are zeros:
  \begin{equation}
   n_{j,m} \int_{0}^1|J_m(k_{j,m}r)|^2rdr=1,\text{ and }J_m(k_{j,m})=0.
  \end{equation}
 If the zeros are ordered with $k_{j,m}<k_{j+1,m},$ then $m<k_{1,m}$ and they satisfy the
  asymptotic formul{\ae}
  \begin{equation}
    k_{j,m}\sim \left(j+\frac m2-\frac 14\right)\pi.
  \end{equation}
  As $J_m(z)\sim [c_+e^{iz}+c_-e^{-iz}]/\sqrt{z},$ we see that $n_{j,m}\approx
  \sqrt{k_{j,m}},$ see~\cite{NIST}[10].

Let $u\in\cC^{\infty}(\overline{D}_1),$ and write $u$ as a Fourier series
\begin{equation}
  u(r,\theta)=\sum_{j=-\infty}^{\infty}u_m(r)e^{im\theta}.
\end{equation}
We can also express $u$ in terms of eigenfunctions $\{\varphi_{j,m}\}$ as
\begin{equation}
  u(r,\theta)=\sum_{m=-\infty}^{\infty}\sum_{j=1}^{\infty}a_{j,m}\varphi_{j,m},
\end{equation}
where
\begin{equation}
  a_{j,m}=n_{j,m}\int_{0}^1u_m(r)J_m(k_{j,m})rdr.
\end{equation}
A simple analysis, using the asymptotics of the Bessel functions, the smoothness
of $u$ as a function of $\theta,$ and integration by parts, shows that there are
constants $\{c_m\},$ so that
\begin{equation}
  |a_{j,m}|\leq \frac{c_m}{k_{j,m}},
\end{equation}
where $c_m=O(m^{-N}),$ for any $N>0.$  Indeed this also gives the asymptotic
behavior of $a_{j,m}$ as $j,m\to\infty.$ Combining these observations, we see that
\begin{equation}
  D_{\lambda_L}^2\leq
  C\sum_{m=-\infty}^{\infty}\sum_{j=1}^{\infty}\frac{\lambda_L^4c_m^2k_{j,m}^2}
  {(1+\lambda_L^2k_{j,m}^2)^2}.
\end{equation}
Using the asymptotic formul{\ae} for the zeros $\{k_{j,m}\}$ we see that
\begin{equation}
  \sum_{j=1}^{\infty}\frac{\lambda_L^4k_{j,m}^2}
      {(1+\lambda_L^2k_{j,m}^2)^2}\leq
      C\int_{m}^{\infty}\frac{\lambda_L^4x^2dx}{(1+\lambda_L^2x^2)^2}\leq
      C\lambda_L\int_0^{\infty}\frac{s^2ds}{(1+s^2)^2},
\end{equation}
and therefore
\begin{equation}
  D_{\lambda_L}^2\leq C\lambda_L\sum_{m=-\infty}^{\infty}c_m^2<C'\lambda_{L},
\end{equation}
as claimed. On the other hand, it is also the case that in order for 
\begin{equation}
  \sum_{m=-\infty}^{\infty}\sum_{j=1}^{\infty}|a_{j,m}|^2k_{j,m}^{4\alpha}<\infty,
\end{equation}
we must take $\alpha<\frac 14.$
\end{remark}
A similar analysis applies to the ball in $\bbR^3,$ with the Laplacian acting on
closed 2-forms with vanishing tangential components. Note that the
smoothness of the data in directions tangent to the boundary played an essential role in
this analysis.

\section{The Limit $\lL\to 0^+$ for Thin Shells}\label{sec3}
In the previous sections and~\cite{EpRa1} we assume
that the superconducting material 
occupies a bounded region $\Omega,$ with
$\pa\Omega$  a smooth, \emph{connected}
surface embedded in $\bbR^3.$ In this section we consider a more
physically interesting situation by assuming that $\Omega_0$ is a
bounded region with a smooth boundary comprised of 2 smooth
surfaces, which is diffeomorphic to a thin neighborhood of an embedded
surface $\Sigma\hookrightarrow\bbR^3,$ i.e.
\begin{equation}
  \Omega_0\simeq
\{x:\:\dist(x,\Sigma)<\epsilon\}.
\end{equation}

This geometry arises naturally when modeling the confinement vessel of
standard fusion reactor designs like the Tokamak and
Stellarator. Since the current in a superconductor is largely confined
to a small neighborhood of the boundary, it makes sense to use such a
thin shell as a model for the confinement magnet. That is, we replace
the coil windings with a thin sheet of superconducting material. In
this way we can learn the ``equilibrium distribution'' of current that
generates a specified field geometry. In this analysis $\Omega_0$ is
fixed; as yet we are unable to let $\epsilon\to 0$ as $\lL\to0.$

\subsection{Problem setup}
 
The assumption that $\pa\Omega_0$ has 2 components implies that
$\overline{\Omega}_0^c$ also consists of two connected components. We let
\begin{equation}
  \overline{\Omega}_0^c=\Omega_+\cup\Omega_-,
\end{equation}
with $\Omega_+$ the unbounded component and $\Omega_-$ the bounded component. We let
\begin{equation}
  \pa\Omega_0=\pa\Omega_+\cup\pa\Omega_-,
\end{equation}
denote the corresponding components of $\pa\Omega_0.$ When we need to emphasize that
these boundary components are approached from $\Omega_0,$ or are oriented as the
boundary of $\Omega_0,$ we denote them as $\pa\Omega_{0+},\,\pa\Omega_{0-}.$ 

Let $\bEta^{0}, \bj^{0}$ denote the magnetic field and the current in the
superconducting region, $\Omega_0,$ as before, and let $\bEta^{\tot \pm}$ denote
the magneto-static fields in the two connected components, $\Omega_{\pm},$ of the
$\Omega_0^c.$ The fields in the interior of $\Omega_0$ satisfy the usual London
equations,
\begin{equation}
  d^*\bEta^0=-\frac{1}{\lL^2}\bj^0,\quad d\bj^0=\bEta^0,\text{ in }\Omega_0,
\end{equation}
and $\bEta^{\tot \pm}$ satisfy the equations of magneto-statics
\begin{equation}\label{eqn3.9}
  d\bEta^{\tot \pm}=0\text{ and } d^*\bEta^{\tot \pm}=0\text{ in }\Omega_{\pm}.
\end{equation}

In the standard ``scattering'' problem for this set-up the magnetic field in
$\Omega_+$ is split into an incoming magneto-static field $\bEta^{\In}_{+},$
generated by sources in $\Omega_+,$ which is defined in a neighborhood, of
$\overline{\Omega_0}\cup\Omega_-,$ and an outgoing scattered field, $\bEta^+,$
so that, near to $\pa\Omega_+$ we have
$$\bEta^{\tot +}=\bEta^{\In}_{+} + \bEta^+.$$
The assumption that $\bEta^+$ is an outgoing field means that
\begin{equation}
  |\bEta^+(x)|=o(|x|^{-1}).
\end{equation}
The boundary conditions can then be written as:
\begin{equation}\label{eqn3.12.11}
  \bEta^0\restrictedto_{\pa\Omega_+}-\bEta^+\restrictedto_{\pa\Omega_+}= \bEta^{\In}_+\restrictedto_{\pa\Omega_+}\text{ and }
  \star\bEta^0\restrictedto_{\pa\Omega_+}-\star\bEta^+\restrictedto_{\pa\Omega_+}
  = \star\bEta^{\In}_+\restrictedto_{\pa\Omega_+}.
\end{equation}

We can also allow an ``incoming field,'' $\bEta^{\In}_-,$ generated by
sources located in $\Omega_-.$  This is a harmonic field defined in a
neighborhood of $\Omega_+\cup\overline{\Omega_0}.$ Near to
$\pa\Omega_{-}$ the field
\begin{equation}
  \bEta^{\tot -}=\bEta^-+\bEta^{\In}_-.
\end{equation}
The boundary conditions on the inner boundary are then
\begin{equation}\label{eqn3.14.11}
  \bEta^0\restrictedto_{\pa\Omega_-}-\bEta^-\restrictedto_{\pa\Omega_-}= \bEta^{\In}_-\restrictedto_{\pa\Omega_-}\text{ and }
  \star\bEta^0\restrictedto_{\pa\Omega_-}-\star\bEta^-\restrictedto_{\pa\Omega_-}
  = \star\bEta^{\In}_-\restrictedto_{\pa\Omega_-}.
\end{equation}

The domain $\Omega_0$ is always non-contractible, as $H_2(\Omega_0)\neq 0.$ As
oriented cycles
\begin{equation}
  \pa\Omega_0=\pa\Omega_+-\pa\Omega_-.
\end{equation}
The components of $\pa\Omega_0$ are therefore homologous, with each generating
$H_2(\Omega_0).$ The Mayer-Vietoris sequence gives a good way to understand the
topology of $\Omega_0.$ The non-trivial parts of this sequence read:
\begin{equation}
  \begin{split}
    0\longrightarrow H_2(\pa\Omega_+)\oplus H_2(\pa\Omega_-)&\longrightarrow H_2(\Omega_0)\oplus H_2(\Omega_+) \longrightarrow 0\\
      0\longrightarrow H_1(\pa\Omega_+)\oplus H_1(\pa\Omega_-)&\longrightarrow H_1(\Omega_0)\oplus[ H_1(\Omega_+)\oplus H_1(\Omega_-)] \longrightarrow 0.
  \end{split}
\end{equation}
given our assumptions on the geometry of $\Omega_0,$ the components $\pa\Omega_{0}$
are surfaces of genus $g,$ which are isotopic in $\bbR^3.$

There are curves $\{A^{\pm}_1,\dots, A^{\pm}_{g};B^{\pm}_1,\dots,
B^{\pm}_{g}\}\subset\pm\Omega_{\pm},$ generating $H_1(\pa\Omega_{\pm}).$ Associated with these curves there
are surfaces (actually 2--cycles),
\begin{equation}
  \begin{split}
    \{S_{A^{\pm}_1},\dots, S_{A^{\pm}_{g}};S_{B^{\pm}_1},\dots,S_{B^{\pm}_{g}}\},&
    \text{ with }
    \pa S_{A^{\pm}_j}= A^{\pm}_j\text{ and }  \pa S_{B^{\pm}_j}= B^{\pm}_j;\\
  S_{A^+_j}\subset \overline{\Omega_0}\cup\Omega_-,\, S_{B^+_j}\subset \Omega_+,\, \text{ and }&
  S_{A^-_j}\subset \Omega_-,\, S_{B^-_j}\subset
  \overline{\Omega_0}\cup\Omega_-;\, \text{ for }j=1,\dots,g.
  \end{split}
\end{equation}
We can arrange to have $S_{A_j^-}\subset S_{A^+},$ and $S_{B_j^+}\subset
S_{B_j^-},$ so that $\{A_j^+:\:j=1,\dots,g\}$ generates $H_1(\Omega_+);$
$\{B_j^-:\:j=1,\dots,g\}$ generates $H_1(\Omega_-);$ and
$\{A_j^+\sim A_j^-;B_j^+\sim B_j^-:\: j=1,\dots,g\}$ 
generates $H_1(\Omega_0).$ The relative homology group
$H^2(\Omega_0,\pa\Omega_0)$ is generated by the 2-cycles $\{S_{A_j^+}-
S_{A_j^-},S_{B_j^+}- S_{B_j^-}:\:j=1,\dots,g\}.$  Figure~\ref{fig1} shows an
example of 2 nested tori, with the $A$- and $B$-cycles, along with the spanning
surfaces.
\begin{figure}[h]
  \centering
  \subfigure[Bottom view]
  {\includegraphics[width=.45\linewidth]{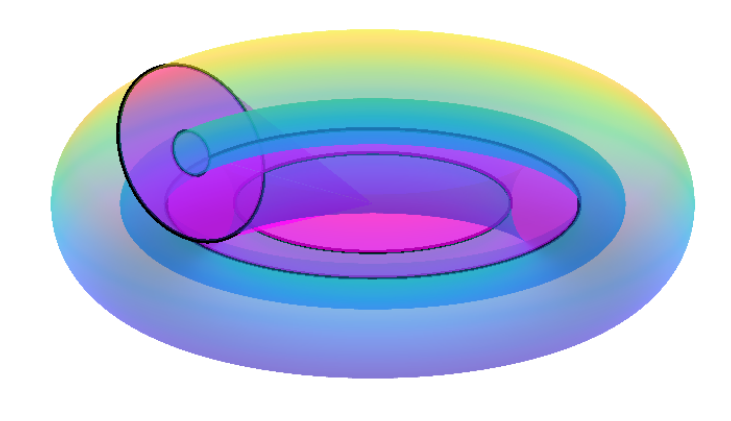}} \quad
  \subfigure[Top view]
  {\includegraphics[width=.45\linewidth]{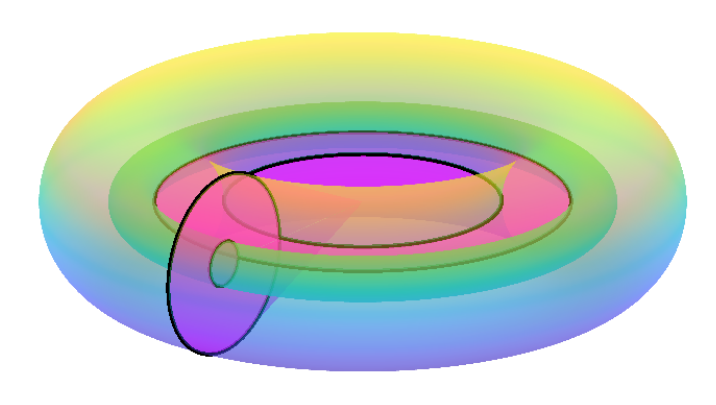}} 
\caption{Two nested tori, showing the $A$-cycles and $B$-cycles (in black), as
  well as the spanning surfaces (in magenta).}
\label{fig1}
\end{figure}

As in earlier cases, topological information must be specified for the
scattering problem to have a unique solution.  Since
$[\star\bEta^{\tot \pm}]\restrictedto_{\pa\Omega_{\pm}}
=[\star\bEta^0]\restrictedto_{\pa\Omega_{\pm}}$ are closed 1-forms, their
integrals over cycles in $\pa\Omega_{\pm}$ depend only on the homology class of
the cycle.  We assume that $\bEta^{\In}_+$ is defined in a neighborhood of
$\overline{\Omega_0}\cup\Omega_-,$ and  $\bEta^{\In}_-$ is defined in a neighborhood of
$\overline{\Omega_0}\cup\Omega_+.$ It then follows from Stokes theorem that
\begin{equation}\label{eqn1.15.1}
  \begin{split}
   &b_j^+=\int\limits_{B^+_j}\star\bEta^0= \int\limits_{B^+_j}\star\bEta^{\tot +}=
   \int\limits_{{B^+_j}}\star\bEta^{\In}_+,\, j=1,\dots,g\\
    &a_j^-=\int\limits_{A^-_j}\star\bEta^0= \int\limits_{A^-_j}\star\bEta^{\tot -}=
   \int\limits_{{A^-_j}}\star\bEta_{-}^{\In},\, k=1,\dots,g,
   \end{split}
\end{equation}
are determined by the incoming fields. On the other hand the periods
 \begin{equation}\label{eqn1.16.1}
   \begin{split}
   a^+_j&=\int\limits_{A^+_j}\star\bEta^{\tot +} =\int\limits_{A^+_j}\star\bEta^0,\text{ for }j=1,\dots,
   g,\\
   b^-_j&=\int\limits_{B^-_j}\star\bEta^{\tot -}=\int\limits_{B^-_j}\star\bEta^0,\text{ for }j=1,\dots, g,
 \end{split}
 \end{equation}
 are not determined a priori, and  constitute additional data
 that must be specified to get a unique solution. Our assumptions on
 the incoming fields, and Stokes theorem  imply that, for $j=1,\dots, g,$
 \begin{equation}\label{eqn1.16.11}
   \begin{split}
   \int\limits_{A^+_j}\star\bEta^{\In}_+ =
   &\int\limits_{B^-_j}\star\bEta^{\In}_{-}=0,\text{ showing }\\
  a_j^+ =\int\limits_{A^+_j}\star\bEta^{+} =\int\limits_{A^+_j}\star\bEta^0,&\quad
   b_j^-=\int\limits_{B^-_j}\star\bEta^{-}=\int\limits_{B^-_j}\star\bEta^0.
 \end{split}
 \end{equation}
 Stokes theorem and the ``curl''-equation $d^*\bEta^{0}  = \bj^{0}$  imply that  the differences
 \begin{equation}
       a_j^+-a_j^-=\int_{S_{A_j^+}-S_{A^-_j}}\star\bj_0\,\text{ and }\,b_j^+-b_j^-=\int_{S_{B_j^+}-S_{B^-_j}}\star\bj_0,
 \end{equation}
 give the fluxes of the current, $\bj_0,$ through the generators for the
 relative homology
 $H_2(\Omega_0,\pa\Omega_0).$ Note that $d\star\bj_0=0$ and $\star\bj_0\restrictedto_{\pa\Omega_0}=0.$
If the topological data, $\{a_j^+,b_j^-\}$ are not all 0, then this problem has a
nontrivial solution, even if $\bEta^{\In}_{\pm}=0.$

There several further
 topological observations that are crucial in our subsequent analysis: since
 $\bEta_0=-\lL^2 d\bj^0$ it follows that
 \begin{equation}\label{eqn1.17.2}
   \int_{\pa\Omega_{\pm}}\bEta_0= -\lL^2\int_{\pa\Omega_{\pm}}d\bj^0=0.
 \end{equation}
 Since
 $\bEta^0\restrictedto_{\pa\Omega_-}=[\bEta^-+\bEta^{\In}_-]_{\pa\Omega_-},$
 this implies
 \begin{equation}
   \int_{\pa\Omega_-}[\bEta^-+\bEta^{\In}_-]=0.
 \end{equation}
 As $d\bEta^-=0$ in $\Omega_-,$ we see that
 \begin{equation}\label{eqn3.24.12}
    \int_{\pa\Omega_-}\bEta^{\In}_-=0
 \end{equation}
 is a necessary condition on the incoming field $\bEta^{\In}_-$ for
 solvability. There is no such restriction on $\bEta^{\In}_+.$
 
 To summarize, the London boundary value problem for the fields
 $[\bEta^-, (\bEta^0, \bj^0), \bEta^+]$ in thin shells 
 for given incoming fields $\bEta^{\In}_{\pm}$ and topological constants
 $a_{j}^{+}, b_{j}^{-}$, $j=1,2,\ldots g$, is given by
 \begin{equation}
 \label{eq:scat_thin_shell}
 \begin{aligned}
 d^* \bEta^{0} &= \bj^0 \quad  \text{ in } \Omega_{0}, \\
  d \bj^{0} &= -\frac{1}{\lL^2}\bEta^0 \quad  \text{ in } \Omega_{0}, \\
  d^{*} \bEta^{\pm} = d \bEta^{\pm} &= 0 \quad  \text{ in } \Omega^{\pm}, \\
  |\bEta^{+}| &= o(|x|^{-1}) \quad \text{ as } |x| \to \infty, \\
  \bEta^{0} \restrictedto_{\pa \Omega^{\pm}} - \bEta^{\pm} \restrictedto_{\pa \Omega^{\pm}} &=  \bEta^{\In}_{\pm}\restrictedto_{\pa \Omega^{\pm}},\\  
    \star\bEta^{0} \restrictedto_{\pa \Omega^{\pm}} - \star\bEta^{\pm} \restrictedto_{\pa \Omega^{\pm}} &=  \star\bEta^{\In}_{\pm}\restrictedto_{\pa \Omega^{\pm}},\\  
   \int_{A_{j}^{+}} \star \bEta^{+} =    \int_{A_{j}^{+}} \star \bEta^{0} &= a_{j}^{+}, \quad j=1,2,\ldots g, \\
      \int_{B_{j}^{+}} \star \bEta^{+} =    \int_{B_{j}^{+}} \star \bEta^{0} &= b_{j}^{-}, \quad j=1,2,\ldots g.
 \end{aligned}
 \end{equation}
 The proof of Theorem 1 from~\cite{EpRa1} is easily adapted to show
 that, if it exists, the solution,
 $[\bEta^+,(\bEta^0,\bj^0),\bEta^-],$ to the scattering problem
 described above is uniquely determined by the data
 $\{\bEta^{\In}_{\pm};a^+_1,\dots,a^+_{g};b^-_1,\dots,b^-_{g}\}.$ 

 \subsection{Existence of Solutions}\label{ss.exist}
In this section, we prove that the solution to the scattering problem given
in~\cref{eq:scat_thin_shell} exists for any $\lL>0$. In~\Cref{sec:connected},
we established the existence of solutions to the scattering problem for London equations, 
by eliminating the current and studying a scattering problem involving only the magnetic fields.
An similar procedure can be applied for proving existence of solutions in the thin-shell 
case with a few minor modifications.

For our study of the solutions to the scattering problem and the limit as
$\lL\to 0^+,$ we let
$[\bEta^+_{\lL},(\bEta^0_{\lL},\bj^{0}_{\lL}),\bEta^-_{\lL}]$ denote the
solution, for the given value of $\lL,$ with fixed periods, and incoming fields,
$\{\bEta^{\In}_{\pm};a^+_1,\dots,a^+_{g};b^-_1,\dots,b^-_{g}\}.$ 
The interior magnetic field
as before satisfies the second order equation
 \begin{equation}\label{eqn2.1.9}
   \left[dd^*+\frac{1}{\lL^2}\right]\bEta^0_{\lL}=0,
 \end{equation}
which eliminates the current in the system of equations in~\cref{eq:scat_thin_shell}.

One of the key differences from the analysis of thin shells as
compared to the earlier analysis is that the ``exterior'' region has
two connected components $\Omega^{\pm}$.  In the limit, as before we
expect the interior magnetic field to converge to zero, and the
current in the superconducting region to converge to a current
sheet on the boundary. Thus, in the limit, while the normal components of the field
remain continuous, the tangential components of the total magnetic
field will no longer be continuous. However, the exterior region
having two connected components requires a  modification in the
construction of the limiting magnetic fields in $\Omega^{\pm}$.
 
 The limiting magnetic field in $\Omega^{+}$, denoted
 by $\bEta^{\out +}$ should be the unique outgoing 2-form satisfying
 \begin{equation}\label{eqn5.2.100}
d \bEta^{\out +} = 0 , \text{ and } d^{*} \bEta^{\out +} = 0 ,
 \end{equation}
 in $\Omega^{+}$, satisfying the boundary conditions
 \begin{equation}
 \bEta^{\out +}\restrictedto_{\pa \Omega^{+}} = -\bEta^{\In}_{+}\restrictedto_{\pa \Omega^{+}} \, ,
 \end{equation}
 and the topological constraints
 \begin{equation}\label{eqn5.4.100}
 \int_{A_{j}^{+}} \bEta^{\out +} = a_{j}^{+} \text{ for }j=1,\dots, g.
 \end{equation}
 Similarly, the limiting magnetic field in $\Omega^{-}$, denoted by
 $\bEta^{\out -}$ is the unique 2-form satisfying
\begin{equation}\label{eqn5.2.101}
d \bEta^{\out -} = 0 , \text{ and } d^{*} \bEta^{\out -} = 0 ,
 \end{equation}
 in $\Omega^{-}$, along with the boundary conditions
 \begin{equation}
 \bEta^{\out -}\restrictedto_{\pa \Omega^-} = -\bEta^{\In}\restrictedto_{\pa \Omega^-} \, ,
 \end{equation}
 and the topological constraints
 \begin{equation}\label{eqn5.4.101}
 \int_{B_{j}^{-}} \bEta^{\out -} = b_{j}^{-} \text{ for }j=1,\dots, g.
 \end{equation}
 The condition~\eqref{eqn3.24.12} is necessary for the existence of
 $\bEta^{\out -}.$ The fields $\bEta^{\out \pm}$ in $\Omega^{\pm}$ are
 in $H^{s}(\Omega^{\pm})$ for any $s \in \mathbb{R}$ for smooth
 incoming data $\bEta^{\In}_{\pm} \restrictedto_{\pa
   \Omega^{\pm}}$.

The ansatz for the scattered magnetic fields is: 
  \begin{equation}\label{top_ansatz_thinshell}
   \begin{split}
     \bEta^{+}_{\lL}&=\bEta^{\out +}+\star d u^{+}_{\lL}\\
     \bEta^{-}_{\lL}&=\bEta^{\out -}+\star d u^{-}_{\lL}\\
     \bEta^{0}_{\lL}&=\bEta^{00}_{\lL}+\beta_{\lL}+\star d v_{\lL}    
   \end{split}
 \end{equation}
 where $d\beta_{\lL} = 0$ in $\Omega$ and $\star
 \beta_{\lL}\restrictedto_{\pa \Omega^{\pm}} = \star (\bEta^{\out
   \pm}+ \bEta^{\In \pm})\restrictedto_{\pa \Omega^{\pm}}$,
 $d\bEta^{00}_{\lL} = 0$ in $\Omega_{0}$ and
 $\star\bEta^{00}_{\lL}\restrictedto_{\pa \Omega_{0}} = 0.$ The
 functions $u_{\lL}^{\pm}$ and $v_{\lL}$ are harmonic and satisfy the
 transmission boundary conditions given
 in~\eqref{eq:thin-shell-lap-trans} below.
 
 \subsubsection{Construction of $\beta_{\lL}$}
Lemma~\ref{lem1} constructs the 2-form $\beta_{\lL},$ with rather precise
estimates, as $\lL\to 0^+.$ As the construction is done in a small neighborhood
of the boundary, the result applies \emph{mutatis mutandis} to the current
situation wherein $\pa\Omega_0$ has 2 components, producing a 2-form
$\beta_{\lL}$ that shares the same tangential data as $\bEta^{\out \pm} +
\bEta^{\In}_{\pm}$ on $\pa \Omega^{\pm}$. This 2-form is supported in a small
neighborhood of $\pa\Omega_0,$ contained in $\overline{\Omega_0}.$ Near each
boundary component, $\pa\Omega^{\pm},$ $\beta_{\lL}$ takes the form
\begin{equation}
\label{eq:beta-def-thin-shell}
  \beta_{\lL}=e^{\frac{r_{\pm}}{\lL}}\psi(r_{\pm})dr_{\pm}\wedge\gamma_{\pm}+E_{\lL}(r_{\pm})d\gamma_{\pm},
\end{equation}
where
\begin{equation}
\label{eq:beta-def-thin-shell2}
  \begin{split}
  &r_{\pm}(x)=-\dist(x,\pa\Omega_{\pm}),\quad
  E_{\lL}(r)=\int_{-\infty}^re^{\frac{s}{\lL}}\psi(s)ds\text{
    and,}\\
  &\gamma_{\pm}=\pi_{\pm}^*(\gamma_{\pm}^0)\text{ where
  }\gamma_{\pm}^0=-\star_{2} (\star (\bEta^{\out\pm} + \bEta^{\In}_{\pm})\restrictedto_{\pa\Omega^{\pm}}).
  \end{split}
\end{equation}
Here $\pi_{\pm}$ are nearest point maps to $\pa\Omega_{\pm}.$  For a
$0<\delta\ll\dist(\pa\Omega_+,\pa\Omega_-),$ we choose a
function $\psi\in\cC^{\infty}_c(\bbR)$ with $\psi(t)=1$ for
$|t|<\delta/2,$ and $\psi(t)=0$ for $|t|>\delta.$ If necessary we further reduce
$\delta$ so that $\pi_{\pm}$ are well defined in the sets
$\{x:\:\dist(x,\pa\Omega_{\pm})\leq\delta\}.$
These 2-forms satisfy the estimates
\begin{equation}
  \|\beta_{\lL}\|_{L^2}\leq C\sqrt{\lL},\quad  \|\beta_{\lL}\|_{L^1}\leq C\lL,\quad
  \quad\left\|\left[dd^*+\frac{1}{\lL^2}\right]\beta_{\lL}\right\|_{L^2}\leq\frac{C}{\sqrt{\lL}}.
\end{equation}
For any $s$ there is a constant $C_s$ so that
\begin{equation}\label{eqn5.32.107}
  \|\beta_{\lL}\restrictedto_{\pa\Omega_0}\|_{H^s(\pa\Omega_0)}=
 E_{\lL}(0) \|d\gamma^0_{\pm}\restrictedto_{\pa\Omega_0}\|_{H^s(\pa\Omega_0)}\leq C_s\lL.
\end{equation}

\subsubsection{The harmonic functions $u^{\pm}_{\lL}$ and $v_{\lL}$}
Recall that $u^{\pm}_{\lL}$ and $v_{\lL}$ are harmonic functions, which, given
$\bEta^{00}_{\lL}$ and $\beta_{\lL},$ enforce the continuity of the normal
component of the magnetic fields. In particular, they satisfy the following
Laplace transmission problem:
\begin{equation}\label{eq:thin-shell-lap-trans}
 \begin{aligned}
 \Delta u^{\pm}_{\lL} &= 0, \text{ in } \Omega^{\pm}, \\
 \Delta v_{\lL} &= 0, \text{ in } \Omega_0, \\
 u^{\pm}_{\lL} &= v_{\lL}, \text{ on } \pa \Omega^{\pm}, \\
[ \partial_{n} u_{\lL}^{\pm} - \partial_{n} v_{\lL}]_{\pa\Omega_{\pm}} &= \frac{\bEta^{00}_{\lL}\restrictedto_{\pa\Omega^{\pm}}+
   \beta_{\lL}\restrictedto_{\pa\Omega^{\pm}}}{dS_{\pa\Omega^{\pm}}}, \\
   |u^{+}_{\lL}(x)| =O(|x|^{-1})& \text{ as } |x| \to \infty \, ,
 \end{aligned}
 \end{equation}
 where the normals along $\pa \Omega_{0}$ point into $\Omega_{0}$ on $\pa \Omega^{-}$, and into $\Omega^{+}$ 
 on $\pa \Omega^{+}$.

The system of equations in~\eqref{eq:thin-shell-lap-trans} has a unique solution that can be expressed in terms  of the single layer potential given by,
 \begin{equation}
 \begin{aligned}
 u^{\pm}_{\lL} &= -\left(\cS_{\pa \Omega^{+}}\left[\frac{\bEta^{00}_{\lL}\restrictedto_{\pa\Omega^{+}} +
   \beta_{\lL}\restrictedto_{\pa\Omega^{+}}}{dS_{\pa\Omega^{+}}} \right] + 
   \cS_{\pa \Omega^{-}}\left[\frac{\bEta^{00}_{\lL}\restrictedto_{\pa\Omega^{-}} +
   \beta_{\lL}\restrictedto_{\pa\Omega^{-}}}{dS_{\pa\Omega^{-}}} \right] \right) \quad \text{ in } \Omega^{\pm} \\
   v_{\lL} &= -\left(\cS_{\pa \Omega^{+}}\left[\frac{\bEta^{00}_{\lL}\restrictedto_{\pa\Omega^{+}} +
   \beta_{\lL}\restrictedto_{\pa\Omega^{+}}}{dS_{\pa\Omega^{+}}} \right] + 
   \cS_{\pa \Omega^{-}}\left[\frac{\bEta^{00}_{\lL}\restrictedto_{\pa\Omega^{-}} +
   \beta_{\lL}\restrictedto_{\pa\Omega^{-}}}{dS_{\pa\Omega^{-}}} \right] \right) \quad \text{ in } \Omega .
   \end{aligned}
 \end{equation}
This follows from the properties of single layer potentials given
in~\Cref{subsec:lp}, in particular the jump conditions on the normal derivative.

We can now eliminate $\star d u^{\pm}_{\lL}$ and $\star d v_{\lL}$ from the ansatz. 
In the thin shell setup, let
\begin{equation}
\begin{aligned}
  B_{0}[h_+,h_-](x)&= \cS_{\pa \Omega^{+}}[h_{+}](x) + \cS_{\pa \Omega^{-}}[h_{-}](x) \quad x \in \Omega_{0},\\
  B_{\pm}[h_+,h_-](x)&= \cS_{\pa \Omega^{+}}[h_{+}](x) + \cS_{\pa \Omega^{-}}[h_{-}](x) \quad x \in \Omega_{\pm}.\\
  \end{aligned}
\end{equation}
Let $B$ be the collective operator which is $B_{0}$ in $\Omega_{0}$, and $B_{\pm}$ in $\Omega^{\pm}$.
Let $\cB$ denote the operator, which acts on closed 2-forms $\theta$ 
defined in $\overline{\Omega_0}$ given by 
\begin{equation}\label{eqn5.36.110}
  \cB(\theta)=-\star dB_0(i^*\theta)\text{
    where }i^*\theta=(i_{n_{+}}\star\theta\restrictedto_{\pa\Omega_{+}},
  -i_{n_-}\star\theta\restrictedto_{\pa\Omega_{-}}).
\end{equation}
 Recall that $n_+$ points into $\Omega_+,$ and $n_-$ points into
 $\Omega_0,$ which explains the signs in~\eqref{eqn5.36.110}.  As before, we have 
 \begin{equation}
 \bEta^{0}_{\lL} =  \bEta^{00}_{\lL} + \beta_{\lL} + \cB(\bEta^{00}_{\lL} + \beta_{\lL}) \, .
 \end{equation}
 
In the following lemma, we present some useful properties of the operator $\cB$.
\begin{proposition}\label{prop1.9}
  The operator $\cB$ acting on closed 2-forms defined in $\Omega_0$ is a
  bounded, self adjoint operator. The spectrum of $\cB$ is contained in
  $[-1,0],$ with accumulation only at $-\frac 12.$ The eigenspace
    at $-1$ is one dimensional; let $\theta_0,$ denote a non-zero element of
    this subspace. The spectrum of $\cB_{\restrictedto \theta_0^{\bot}}$ is
    a compact subset of $(-1,0].$
\end{proposition}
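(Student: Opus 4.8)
The plan is to follow the proof of Theorem~\ref{thm1} almost verbatim for boundedness, self-adjointness and the bound $\cB\le 0$, and then to add a topological count --- specific to the present geometry, in which $\pa\Omega_0$ has exactly two components, one enclosing the bounded region $\Omega_-$ and one the unbounded region $\Omega_+$ --- in order to locate the eigenvalue $-1$ and describe the rest of the spectrum.

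\textbf{Boundedness, self-adjointness, $\cB\le 0$.} First I would write $\cB$ as the composition
\begin{equation*}
  L^2(\Omega_0;\cZ^2)\overset{i^*}{\longrightarrow} H^{-\frac12}(\pa\Omega_0)\overset{-B_0}{\longrightarrow} H^1(\Omega_0)\overset{\star d}{\longrightarrow} L^2(\Omega_0;\cZ^2),
\end{equation*}
with $i^*$ bounded by the two-component version of Lemma~\ref{tr_lem} (whose proof in~\Cref{app_tr_lem} is local near the boundary and applies unchanged), $-B_0$ bounded because --- up to the smoothing cross-terms between the two boundary components, which are at positive distance from one another --- it is the restriction to $\Omega_0$ of the Laplace single-layer potential $\cS_{\pa\Omega_0}$, and $\star d$ trivially bounded; the range lies in $\cZ^2(\Omega_0)$ since $d\star dw=0$ for harmonic $w$. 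The point to verify here is that the signs in~\eqref{eqn5.36.110} are precisely those making $i^*\theta$ equal to the $\Omega_0$-outward normal component of $\star\theta$ along all of $\pa\Omega_0$ at once, so that $B_0(i^*\theta)=\cS_{\pa\Omega_0}[i^*\theta]\restrictedto_{\Omega_0}$ for a single density on $\pa\Omega_0$. Integrating by parts in $\Omega_0$ (using continuity of the single-layer potential across $\pa\Omega_0$) then gives, exactly as in Theorem~\ref{thm1},
\begin{equation*}
  \langle\cB\theta_1,\theta_2\rangle_{\Omega_0}=-\langle S_{\pa\Omega_0}\tau_1,\tau_2\rangle_{\pa\Omega_0},\qquad \tau_i=i^*\theta_i,
\end{equation*}
so that $\cB$ is self-adjoint (as $S_{\pa\Omega_0}$ is), and $\langle\cB\theta,\theta\rangle_{\Omega_0}\le 0$ --- with equality iff $i^*\theta=0$ --- from the strict positivity of $S_{\pa\Omega_0}$ on $L^2(\pa\Omega_0)$ (the Newtonian energy in $\bbR^3$ is positive definite regardless of the connectivity of the boundary). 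Hence $\Spec(\cB)\subset[-\|\cB\|,0]$ and $\{\theta:i^*\theta=0\}\subset\Ker\cB$.

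\textbf{Spectral decomposition via the Calderon identity.} Next, as in Theorem~\ref{thm1}, the identity $D_{\pa\Omega_0}S_{\pa\Omega_0}=S_{\pa\Omega_0}S'_{\pa\Omega_0}$ (valid on the full smooth, disconnected boundary) shows $S_{\pa\Omega_0}^{\frac12}S'_{\pa\Omega_0}S_{\pa\Omega_0}^{-\frac12}$ is compact, self-adjoint and of order $-1$. Taking an orthonormal eigenbasis $\{h_j\}$, setting $g_j=S_{\pa\Omega_0}^{-\frac12}h_j\in\cC^\infty(\pa\Omega_0)$ (eigenfunctions of $S'_{\pa\Omega_0}$ with eigenvalues $\mu_j\to0$, $\mu_j\in[-\tfrac12,\tfrac12]$ classically), and $\theta_j=\star d\cS_{\pa\Omega_0}[g_j]\restrictedto_{\Omega_0}$, the computation of Theorem~\ref{thm1} gives $\theta_j\restrictedto_{\pa\Omega_0}=(\mu_j+\tfrac12)g_j\,dS$, $\cB\theta_j=-(\mu_j+\tfrac12)\theta_j$, and density of $\Span\{\theta_j:\theta_j\neq0\}$ in $[\Ker\cB]^{\bot}$. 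This already gives $\Spec(\cB)\subset[-1,0]$, with $-\tfrac12$ the only possible accumulation point.

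\textbf{The extreme eigenspaces --- the main obstacle.} It remains to analyze the $\pm\tfrac12$ eigenspaces of $S'_{\pa\Omega_0}$, and this is the step I expect to be delicate. An eigenfunction at $-\tfrac12$ yields a single-layer potential with vanishing $\Omega_0$-side normal derivative on all of $\pa\Omega_0$, hence constant in the connected set $\Omega_0$, hence constant on $\pa\Omega_0$, hence (as $\Omega_-$ is bounded) constant in $\Omega_-$, while in $\Omega_+$ it is the unique decaying equilibrium potential; this determines $g$ up to scale, so $-\tfrac12$ is simple and the corresponding $\theta=\star d(\text{const})=0$ lies in $\Ker\cB$, just as in Theorem~\ref{thm1}. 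An eigenfunction at $+\tfrac12$ yields a single-layer potential constant on each of $\Omega_\pm$; since $\Omega_+$ is unbounded and the potential decays there, that constant is $0$ on $\Omega_+$ and a free parameter on $\Omega_-$, so the induced harmonic function on $\Omega_0$ --- boundary values $0$ on $\pa\Omega_+$ and constant on $\pa\Omega_-$ --- is the non-trivial ``capacitor'' potential, its $\star d$ non-zero, and this eigenspace one-dimensional. Thus $-1=-(\tfrac12+\tfrac12)$ is a simple eigenvalue of $\cB$ with eigenvector $\theta_0:=\star d(\text{capacitor potential})$; all remaining $\mu_j$ lie in the open interval $(-\tfrac12,\tfrac12)$, and since $\pm\tfrac12$ are isolated eigenvalues of the compact operator above and $\mu_j\to0$, one gets $\Spec(\cB)=\{-1\}\cup\Sigma\cup\{0\}$ with $\Sigma\subset(-1,0)$ accumulating only at $-\tfrac12$, whence $\Spec(\cB\restrictedto_{\thop})=\Sigma\cup\{0\}$ is a compact subset of $(-1,0]$. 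The genuinely delicate point throughout is this topological count --- that the extreme Neumann--Poincar\'e eigenvalue $+\tfrac12$ on the two-component boundary is attained with multiplicity exactly one, which uses that $\Omega_0$ is connected, that there is exactly one bounded complementary component, and that the unbounded component supports no non-zero bounded harmonic function --- together with keeping the sign conventions of~\eqref{eqn5.36.110} and the single-layer jump relations straight through $\cB\theta_j=-(\mu_j+\tfrac12)\theta_j$. As a cross-check, the $-1$ eigenspace can be identified without the spectral machinery: $(\Id+\cB)\theta=0$ forces $\theta=\star dv$ with $v$ harmonic in $\Omega_0$ equal to the single-layer potential of its own $\Omega_0$-outward normal derivative, so Green's representation makes $\cD_{\pa\Omega_0}[v\restrictedto_{\pa\Omega_0}]$ vanish in $\Omega_0$, and the double-layer jump relations, together with boundedness of $\Omega_-$ and unboundedness of $\Omega_+$, again pin $v\restrictedto_{\pa\Omega_0}$ down to ``$0$ on $\pa\Omega_+$, constant on $\pa\Omega_-$''.
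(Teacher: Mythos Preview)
Your proposal is correct and follows essentially the same route as the paper: reduce the spectrum of $\cB$ to that of $S'_{\pa\Omega_0}$ via $\cB\theta_j=-(\mu_j+\tfrac12)\theta_j$, then carry out the topological count of the $\pm\tfrac12$ eigenspaces using that $\Omega_0$ is connected, $\Omega_-$ bounded, and $\Omega_+$ unbounded. The only presentational differences are that the paper proves self-adjointness by writing out the double integrals and applying Fubini (rather than the Stokes/IBP argument you borrow from Theorem~\ref{thm1}), diagonalizes $S'$ with respect to the $\langle S\cdot,\cdot\rangle$ inner product (rather than via $S^{1/2}S'S^{-1/2}$), and explicitly invokes the energy identity $\mu_j=(U-V)/2(U+V)$ to place $\mu_j\in[-\tfrac12,\tfrac12]$ rather than citing it as classical.
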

\begin{proof}
  We let $\theta_1,\theta_2$ be closed 2-forms defined in $\Omega_0$ and set
  \begin{equation}
     \tau_{i}^{\pm}=i_{n_{\pm}}\star\theta_{i}\restrictedto_{\pa\Omega_{\pm}},\,i=1,2.
  \end{equation}
  Stokes theorem implies that
  \begin{equation}
    \langle \cB\theta_1,\theta_2\rangle=\int_{\pa\Omega_+}(\cS_{\pa \Omega^{-}}\tau_1^--\cS_{\pa \Omega^{+}}\tau_1^+)
    \tau_2^+dA_{\pa\Omega_+}-
    \int_{\pa\Omega_-}(\cS_{\pa \Omega^{-}}\tau_1^--\cS_{\pa \Omega^{+}}\tau_1^+)\tau_2^-dA_{\pa\Omega_-}
  \end{equation}
  The minus sign in front of  the integral over $\pa\Omega^-$ accounts for the fact that
  $n_-$ is the \emph{inward} pointing normal along this boundary.
  Writing out the double integrals, using Fubini's theorem, and the fact that $g_0(x,y)=g_0(y,x)$ we see that
  \begin{multline}
    \langle \cB\theta_1,\theta_2\rangle=
    \int_{\pa\Omega_+}\int_{\pa\Omega_-}g_0(x,y)[\tau_1^-(y)\tau_2^+(x)+\tau_2^-(y)\tau_1^+(x)]dA_{\pa\Omega_-}(y)dA_{\pa\Omega_+}(x)-\\\int_{\pa\Omega_+}\int_{\pa\Omega_+}g_0(x,y)\tau_1^+(y)\tau_2^+(x)dA_{\pa\Omega_+}(y)dA_{\pa\Omega_+}(x)-\\\int_{\pa\Omega_-}\int_{\pa\Omega_-}g_0(x,y)\tau_1^-(y)\tau_2^-(x)dA_{\pa\Omega_-}(y)dA_{\pa\Omega_-}(x)=\langle \theta_1,\cB\theta_2\rangle,
  \end{multline}
  proving that $\cB$ is self adjoint.

Using properties of the single layer potential and the definition of $\cB$, we see that
  \begin{multline}\label{eqn2.29.4}
    \cB(\theta)\restrictedto_{\pa\Omega_0}=\\
    -\left[\left(\frac{i_{n_+}\theta}{2}+S'_+(i_{n_+}\theta,-i_{n_-}\theta)\right)
      dS_{\pa\Omega_{0+}},
      \left(\frac{-i_{n_-}\theta}{2}+S'_-(i_{n_+}\theta,-i_{n_-}\theta)\right)
      dS_{\pa\Omega_{0-}}\right]\\
    :=-\left[\frac{\Id}{2}+S'\right](i_{n_+}\theta,-i_{n_-}\theta),
  \end{multline}
  where $dS_{\pa\Omega_{0\pm}}=\pm i_{n_{\pm}}dV$ are the area forms of
  $\pa\Omega_0$ oriented as the boundary of $\Omega_0,$ where the self-integrals
are understood in a principal value sense.   The proof of Theorem 3.3.1
  in~\cite{Nedelec} applies to show that $S:L^2(\pa\Omega_0)\to
  L^2(\pa\Omega_0)$ is a positive self adjoint operator, and therefore we can
  use $\langle S f,g\rangle_{\pa\Omega_0}$ to define an inner product.  As
  before $S'_{\pa\Omega_0}$ is self adjoint with respect to this inner
  product. Let $\{(\mu_j,g_j):\: j=1,\dots\}$ denote a basis, orthonormal, with
  respect to this inner product, of eigenpairs for $S',$ and set
  $u_j=B(g_{j+},g_{j-}).$ If we let
  \begin{equation}
    U=\int_{\Omega_0}|\nabla u_j|^2,\quad V=\int_{\Omega_+\cup\Omega_-}|\nabla u_j|^2,
  \end{equation}
  then a classical argument, see~\cite{ColtonKress,NedelecPlanchard1973}, using the fact
  that $\Delta |u_j|^2=2|\nabla u_j|^2,$ shows that
  \begin{equation}
    \mu_j=\frac{U-V}{2(U+V)},
  \end{equation}
  which implies that $-\frac 12\leq \mu_j\leq \frac 12.$ In this case, both $\pm
  \frac 12$ are in the spectrum of $S',$ and of multiplicity 1. We let
  $\mu_c=-\frac 12$ and $\{\mu_j:\: j\in\bbN_0\}$ denote the spectrum in
  $(-\frac 12,\frac 12],$ with $\mu_0=\frac 12.$

 Let $g_0$ be an non-trivial element of the $\frac 12$-eigenspace of $S_{\pa\Omega_0}';$   define
  $\varphi_0=\cS_{\pa\Omega_0} g_0$ in $\Omega_0$ and let $\varphi_{\pm}$ denote
 the harmonic functions defined by this single layer in $\Omega_{\pm}.$ The fact that
 $S_{\pa\Omega_0}'g_0=\frac 12 g_0,$ and the standard jump formul{\ae} imply
 that
 \begin{equation}
   \pa_{n}\varphi_{\pm}\restrictedto_{\pa\Omega_{\pm}}=0.
 \end{equation}
This shows that $\varphi_+\equiv 0,$ and $\varphi_-$ is a constant, which we can
normalize to be 1. As the single layer is continuous across $\pa\Omega_0,$ 
uniqueness for the Dirichlet problem proves that the eigenspace at
$\frac 12$ is 1-dimensional, and spanned by $g_0.$ If we let $\theta_0=\star d
\varphi_0,$ then we see that $\cB\theta_0=-\theta_0.$

The eigenvalue at $\mu_c=-\frac 12$ comes from the unique harmonic function that equals
1 in $\Omega^-\cup\Omega_0,$ and goes to zero in $\Omega_+.$ This eigenspace of
$S'$ is therefore spanned by a function $g_c$ such that $\cS_{\pa\Omega_0}g_c\equiv 1$ in
$\Omega_0;$ note that $g_c\restrictedto_{\pa\Omega_-}=0.$ Hence
  \begin{equation}\label{eqn5.43.105}
    d\cS_{\pa\Omega_0}[g_c](x)=0\text{ for }x\in\Omega_0.
  \end{equation}

  For $j\geq 0$ we let
  \begin{equation}\label{eqn2.32.4}
    \theta_j=\star d B_0(g^j_+,g^j_-),
  \end{equation}
  then
  \begin{equation}
    \theta_j\restrictedto_{\pa\Omega_0}=\left(\frac 12+\mu_j\right)g_jdS_{\pa\Omega_0},
  \end{equation}
  which implies that
  \begin{equation}\label{eqn2.34.4}
    \cB\theta_j=-\left(\frac 12+\mu_j\right)\theta_j.
  \end{equation}
 Arguing as in the proof of~\Cref{thm1}, we show that if $\theta$ is orthogonal
 to the $\Span\{\theta_j:\: j=0,1,2,\dots\},$ then $i^*\theta=a
 g_cdS_{\pa\Omega_0},$ for a constant $a.$ Thus~\eqref{eqn5.43.105} implies that
 $\cB(\theta)=0,$ and therefore the spectrum of $\cB$ is contained in $[-1,0],$
 with $-\frac 12$ the only point of accumulation. As $\theta_0$ is an
 eigenvector of $\cB,$ restricting to 2-forms orthogonal to $\theta_0,$ we
 eliminate the eigenvalue at $-1.$ The spectrum of $\cB_{\restrictedto
   \theta_0^{\bot}}$ is a compact subset of $(-1,0].$
\end{proof}

\begin{remark}
  As before the null-space of $\cB$ is infinite dimensional, consisting of
  closed 2-forms with $i^*\theta=a g_cdS_{\pa\Omega_0},$  for  $a\in\bbC.$
\end{remark}

\subsubsection{Analysis of $\bEta^{00}_{\lL}$}
Finally, we turn our attention to the analysis of the closed 2-form $\bEta^{00}_{\lL}$.  
Combining the
constructions of $u^{\pm}_{\lL}$, $v_{\lL}$, and $\beta_{\lL}$, we note that $\bEta^{00}_{\lL}$ also satisfies
 \begin{equation}\label{eqn2.46.055}
\tcA_{\lL}\bEta^{00}_{\lL} :=  \left[dd^*+\frac{\Id+\cB}{\lL^2}\right]\bEta^{00}_{\lL}=
    -\left[dd^*+\frac{1}{\lL^2}\right]\beta_{\lL}-\frac{1}{\lL^2}\cB\beta_{\lL}\,,\\
 \end{equation}
As before, the above relation does not imply that $\bEta^{00}_{\lL} = -\beta_{\lL}$ since
$\bEta^{00}_{\lL}$ must have $0$ tangential data. Thus the operator $\tcA_{\lL}$ must be understood in
an appropriate domain. The domain of the operator $\tcA_{\lL}=\left[dd^*+\frac{\Id+\cB}{\lL^2}\right]$
consists of closed 2-forms, $\bEta,$ in $\Omega_0,$ such that
\begin{equation}
  \bEta\in H^2(\Omega_0;\cZ^2),\text{ and } \star\bEta\restrictedto_{\pa\Omega_0}=0.
\end{equation}
This is a self adjoint operator. Indeed we have the following.
\begin{proposition}
  Let $\cA_{\lL}$ be the operator given in~\eqref{eqn2.46.055}, with domain consisting of
  $$\bEta\in H^2(\Omega_0;\cZ^2),\text{ and } \star\bEta\restrictedto_{\pa\Omega_0}=0,$$
  with
  \begin{equation}\label{eqn2.22.22}
  \int_{\Omega_0}\bEta\wedge\star \theta_0=0.
  \end{equation}
  This is an invertible self adjoint operator; there are positive constants $C_0, C_1, C_2$ so that
  \begin{equation}
   \|\cA_{\lL}^{-1}\|_{L^2\to L^2}\leq C_0 \lL^2,\quad
   \|\cA_{\lL}^{-1}\|_{L^2\to H^1}\leq C_1\lL\text{ and
   }\|\cA_{\lL}^{-1}\|_{L^2\to H^2}\leq C_2.
  \end{equation}
\end{proposition}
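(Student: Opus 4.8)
The plan is to repeat, essentially line for line, the argument given in~\Cref{sec:a-analysis} for the operator $\cA_{\lL}=dd^*+(\Id+B)/\lL^2$ in the connected case, with~\Cref{prop1.9} replacing~\Cref{thm1}. The one substantively new feature is that $\Id+\cB$ is \emph{not} uniformly positive on all of $L^2(\Omega_0;\cZ^2)$: by~\Cref{prop1.9} its kernel on closed $2$-forms is exactly the line $\bbR\theta_0$ spanned by the $(-1)$-eigenvector of $\cB$. This is precisely why the domain of the operator (written $\tcA_{\lL}$ in~\eqref{eqn2.46.055}, $\cA_{\lL}$ in the statement) is cut down to the closed $2$-forms $\bEta\in H^2(\Omega_0;\cZ^2)$ with $\star\bEta\restrictedto_{\pa\Omega_0}=0$ \emph{and} $\langle\bEta,\theta_0\rangle_{\Omega_0}=\int_{\Omega_0}\bEta\wedge\star\theta_0=0$.

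First I would check that $\theta_0$ itself lies in the (unconstrained) domain and in the kernel of $\tcA_{\lL}$. Recall from the proof of~\Cref{prop1.9} that $\theta_0=\star d\varphi_0$ with $\varphi_0=\cS_{\pa\Omega_0}g_0$, $g_0\in\cC^\infty(\pa\Omega_0)$, and that the harmonic extensions of $\varphi_0$ are $\equiv 0$ on $\Omega_+$ and $\equiv 1$ on $\Omega_-$. Since the single-layer potential is continuous across $\pa\Omega_0$, $\varphi_0$ is constant on each component of $\pa\Omega_0$, so $\star\theta_0\restrictedto_{\pa\Omega_0}=d\varphi_0\restrictedto_{\pa\Omega_0}=d_{\pa\Omega_0}(\varphi_0\restrictedto_{\pa\Omega_0})=0$; also $\theta_0\in\cC^\infty(\overline{\Omega_0})\cap\cZ^2$ and $d^*\theta_0=\star dd\varphi_0=0$, hence $dd^*\theta_0=0$. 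Combined with $\cB\theta_0=-\theta_0$, this gives $\tcA_{\lL}\theta_0=dd^*\theta_0+\lL^{-2}(\Id+\cB)\theta_0=0$. Because $dd^*$ (with the boundary condition $\star\,\cdot\restrictedto_{\pa\Omega_0}=0$) and $\cB$ are self adjoint and $\theta_0$ is an eigenvector of each, both operators map $\theta_0^{\bot}\cap\Dom(\tcA_{\lL})$ into $\theta_0^{\bot}$, so $\tcA_{\lL}$ genuinely defines an operator on the constrained domain; its self adjointness there follows, as in~\Cref{sec:a-analysis}, from the boundedness and self adjointness of $\cB$ and elementary perturbation theory~\cite{Kato}.

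Next I would establish the coercivity estimate. By~\Cref{prop1.9} the spectrum of $\cB$ restricted to $\theta_0^{\bot}$ is a compact subset of $(-1,0]$; hence there is an $m\in(0,1)$ (the negative of the bottom of that spectrum) such that $\langle\cB\theta,\theta\rangle_{\Omega_0}\geq-m\langle\theta,\theta\rangle_{\Omega_0}$ for every closed $2$-form $\theta\perp\theta_0$. Therefore, for $\theta$ in the constrained domain,
\begin{equation}
  \langle\tcA_{\lL}\theta,\theta\rangle_{\Omega_0}\geq\langle d^*\theta,d^*\theta\rangle_{\Omega_0}+\frac{1-m}{\lL^2}\langle\theta,\theta\rangle_{\Omega_0}.
\end{equation}
Since $1-m>0$, $\tcA_{\lL}$ has trivial kernel on the constrained domain for every $\lL\in(0,\infty)$, and, being self adjoint, it is invertible, with inverse defined on $L^2(\Omega_0;\cZ^2)\cap\theta_0^{\bot}$ (the orthogonal complement of its kernel) and $\|\tcA_{\lL}^{-1}\|_{L^2\to L^2}\leq(1-m)^{-1}\lL^2=:C_0\lL^2$. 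The remaining two bounds then go through exactly as in~\Cref{sec:a-analysis}. The boundary condition $\star\bEta\restrictedto_{\pa\Omega_0}=0$ together with $d\bEta=0$ is an elliptic boundary condition for $\Delta_A=dd^*+d^*d$, so a standard elliptic (Gaffney) estimate combined with the display above gives $\|\theta\|_{H^1(\Omega_0)}^2\leq C\langle\tcA_{\lL}\theta,\theta\rangle_{\Omega_0}\leq C\|\tcA_{\lL}\theta\|_{L^2}\|\theta\|_{L^2}$ for $\lL$ in a bounded interval; putting $\theta=\tcA_{\lL}^{-1}\eta$ and inserting $\|\theta\|_{L^2}\leq C_0\lL^2\|\eta\|_{L^2}$ yields $\|\tcA_{\lL}^{-1}\|_{L^2\to H^1}\leq C_1\lL$. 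Finally, writing $dd^*\theta=\eta-\lL^{-2}(\Id+\cB)\theta$ and using $\|\cB\theta\|_{L^2}\leq C\|\theta\|_{L^2}\leq CC_0\lL^2\|\eta\|_{L^2}$ shows $\|dd^*\theta\|_{L^2}\leq C\|\eta\|_{L^2}$, and elliptic regularity for $\Delta_A$ with this boundary condition then gives $\|\theta\|_{H^2(\Omega_0)}\leq C(\|dd^*\theta\|_{L^2}+\|\theta\|_{L^2})\leq C_2\|\eta\|_{L^2}$.

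The only real obstacle is handling the degeneracy of $\Id+\cB$ at $\theta_0$: one must verify that $\theta_0$ genuinely satisfies the boundary condition defining the domain — which, as observed, reduces to $\varphi_0$ being locally constant on $\pa\Omega_0$, a fact already extracted inside the proof of~\Cref{prop1.9} — and, crucially, that after passing to $\theta_0^{\bot}$ the operator inherits a \emph{strict} positive lower bound $1-m>0$ on the coefficient of $\lL^{-2}$. This last point is exactly the assertion of~\Cref{prop1.9} that the spectrum of $\cB$ on $\theta_0^{\bot}$ is a compact subset of $(-1,0]$, hence bounded away from $-1$. With those two points in hand, everything else is a transcription of the connected-case argument and no new analytic input is required.
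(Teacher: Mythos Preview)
Your proposal is correct and follows essentially the same route as the paper's proof: verify that $\theta_0$ lies in the domain and annihilates $\tcA_{\lL}$, pass to $\theta_0^{\bot}$, invoke~\Cref{prop1.9} to get the strict lower bound $\langle(\Id+\cB)\theta,\theta\rangle\geq c\|\theta\|_{L^2}^2$ on that subspace, and then read off the three operator-norm estimates exactly as in~\Cref{sec:a-analysis}. If anything you give slightly more detail than the paper in justifying $\star\theta_0\restrictedto_{\pa\Omega_0}=0$ (via $\varphi_0$ being locally constant on $\pa\Omega_0$), which the paper simply asserts.
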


\begin{proof}
   The facts $d d^{*} \theta_{0} = 0$,  $(I + \cB)\theta_{0} = 0$, and $ \star\theta_0\restrictedto_{\pa\Omega_0}=0,$
   imply that $\theta_{0}$ is in the domain of $\tcA_{\lL}$, and, in
   particular, is a null vector of $\tcA_{\lL}$.   Hence
   $\tcA_{\lL}$ restricted to $\theta_{0}^{\bot},$ is a self adjoint operator.

Using Proposition~\ref{prop1.9} we see that there is a constant $c>0,$ so that if
$\bEta\in\Dom(\cA_{\lL}),$ then
\begin{equation}\label{eqn2.33.3}
  \langle(I+\cB)\bEta,\bEta\rangle\geq c\|\bEta\|^2_{L^2},
\end{equation}
and therefore
\begin{equation}\label{eqn2.34.3}
  \langle\cA_{\lL}\bEta,\bEta\rangle=\langle d^*\bEta,d^*\bEta\rangle+
  \frac{\langle(I+\cB)\bEta,\bEta\rangle}{\lL^2}\geq\frac{c\|\bEta\|^2_{L^2}}{\lL^2}.
\end{equation}
Thus the operator $\cA_{\lL}^{-1}$ is a well defined operator on closed 2-forms
orthogonal to $\theta_0.$ Moreover we have the norm estimate
\begin{equation}
  \|\cA_{\lL}^{-1}\|_{L^2\to L^2}\leq \frac{\lL^2}{c}.
\end{equation}
It also follows easily from~\eqref{eqn2.34.3}, and the equation that there is a constant, $C,$ so
that
\begin{equation}
  \|\cA_{\lL}^{-1}\|_{L^2\to H^1}\leq C\lL\text{ and }\|\cA_{\lL}^{-1}\|_{L^2\to H^2}\leq C.
\end{equation}
\end{proof}
\begin{remark}
Observe that if $\bEta$ is a closed 2-form
   satisfying~\eqref{eqn1.17.2}, then Stokes theorem shows that
\begin{equation}\label{eqn2.22.2}
  \int_{\Omega_0}\bEta\wedge\star \theta_0=\int_{\pa\Omega_-}\bEta=0.
\end{equation}
\end{remark}

The basic existence theorem follows easily from this proposition.
\begin{theorem}
  For an arbitrary incoming harmonic fields $\bEta^{\In}_{\pm},$ satisfying~\eqref{eqn3.24.12}, and periods
  $\{a^+_1,\dots,a^+_{g};b^-_1,\dots,b^-_{g}\}$ there exists a unique solution
  $(\bEta^{+}_{\lL},(\bEta^{0}_{\lL},\bj^0_{\lL}),\bEta^-_{\lL}),$ to the transmission problem given
  by the system of equations~\eqref{eq:scat_thin_shell}. 
\end{theorem}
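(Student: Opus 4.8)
Uniqueness has already been reduced, in the discussion following \cref{eq:scat_thin_shell}, to an adaptation of Theorem~1 of \cite{EpRa1}, so the work is to \emph{construct} a solution, and the plan is to run the template of \Cref{sec:connected} through the thin-shell ansatz \cref{top_ansatz_thinshell}, assembling the building blocks already put in place. First I would fix the limiting exterior fields $\bEta^{\out\pm}$ as the outgoing/interior magneto-static solutions of \cref{eqn5.2.100}--\cref{eqn5.4.101}, recalling that they lie in $H^{s}(\Omega^{\pm})$ for every $s$ and that solvability of the $\Omega^{-}$ problem is precisely where the hypothesis \cref{eqn3.24.12} on $\bEta^{\In}_{-}$ is used. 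Then I would take $\beta_{\lL}$ to be the closed $2$-form produced by the thin-shell version of \Cref{lem1}, displayed in \cref{eq:beta-def-thin-shell}, whose tangential trace on each component $\pa\Omega^{\pm}$ equals $\star(\bEta^{\out\pm}+\bEta^{\In}_{\pm})\restrictedto_{\pa\Omega^{\pm}}$ and which satisfies the $\lL$-estimates quoted there.

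The heart of the construction is the closed $2$-form $\bEta^{00}_{\lL}$, which by \cref{eqn2.46.055} must solve $\tcA_{\lL}\bEta^{00}_{\lL}=F_{\lL}$ with $F_{\lL}:=-[dd^{*}+\lL^{-2}]\beta_{\lL}-\lL^{-2}\cB\beta_{\lL}$; since $\beta_{\lL}$ is closed and $\cB$ maps into closed $2$-forms, $F_{\lL}\in L^{2}(\Omega_{0};\cZ^{2})$. Because by \Cref{prop1.9} the operator $\tcA_{\lL}$ has the one-dimensional null space spanned by $\theta_{0}$, I must check $F_{\lL}\perp\theta_{0}$ before inverting: since $d^{*}\theta_{0}=0$ and $\star\theta_{0}\restrictedto_{\pa\Omega_{0}}=0$, the pairing $\langle dd^{*}\beta_{\lL},\theta_{0}\rangle$ vanishes after one integration by parts, while the self-adjointness of $\cB$ together with $(\Id+\cB)\theta_{0}=0$ gives $\langle(\Id+\cB)\beta_{\lL},\theta_{0}\rangle=\langle\beta_{\lL},(\Id+\cB)\theta_{0}\rangle=0$; hence $\langle F_{\lL},\theta_{0}\rangle=0$. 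The invertibility proposition just established then lets me set $\bEta^{00}_{\lL}:=-\tcA_{\lL}^{-1}F_{\lL}\in\Dom(\tcA_{\lL})$. With $\bEta^{00}_{\lL}$ in hand I would take $u^{\pm}_{\lL}$ and $v_{\lL}$ to be the single-layer solutions of the Laplace transmission problem \cref{eq:thin-shell-lap-trans}, whose solvability is guaranteed by the jump relations recalled in \Cref{subsec:lp}, and then define $\bEta^{\pm}_{\lL},\bEta^{0}_{\lL}$ by \cref{top_ansatz_thinshell} and $\bj^{0}_{\lL}:=d^{*}\bEta^{0}_{\lL}$.

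It then remains to check that this tuple satisfies every line of \cref{eq:scat_thin_shell}. The interior second-order equation \cref{eqn2.1.9} follows by construction once one uses the identity $\star dv_{\lL}=\cB(\bEta^{00}_{\lL}+\beta_{\lL})$, which is immediate from the definitions of $\cB$ in \cref{eqn5.36.110} and of $v_{\lL}$; the London pair $d^{*}\bEta^{0}_{\lL}=\bj^{0}_{\lL}$, $d\bj^{0}_{\lL}=-\lL^{-2}\bEta^{0}_{\lL}$ is then immediate. In each $\Omega^{\pm}$ the equations $d\bEta^{\pm}_{\lL}=d^{*}\bEta^{\pm}_{\lL}=0$ and the outgoing condition hold because $\bEta^{\out\pm}$ already satisfies \cref{eqn5.2.100}--\cref{eqn5.2.101} and $\star du^{\pm}_{\lL}$ is closed, co-closed, and $O(|x|^{-2})$. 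Continuity of the tangential component of the magnetic field across each $\pa\Omega^{\pm}$ uses $\star\bEta^{00}_{\lL}\restrictedto_{\pa\Omega_{0}}=0$, the matching tangential trace of $\beta_{\lL}$, and the fact that $u^{\pm}_{\lL}=v_{\lL}$ on $\pa\Omega^{\pm}$ forces their surface differentials to agree; continuity of the normal component is exactly the jump condition built into \cref{eq:thin-shell-lap-trans}, once the two orientations ($n_{+}$ into $\Omega_{+}$, $n_{-}$ into $\Omega_{0}$) are kept straight. The $A^{+}_{j}$- and $B^{+}_{j}$-periods are inherited from \cref{eqn5.4.100}--\cref{eqn5.4.101}, since the corrections $\star du^{\pm}_{\lL}$ are $\star$ of exact $1$-forms and hence have vanishing periods over cycles in $\pa\Omega^{\pm}$; finally the flux/topology relation \cref{eqn1.17.2} is automatic, because $\bEta^{0}_{\lL}=(\Id+\cB)(\bEta^{00}_{\lL}+\beta_{\lL})$ gives $\int_{\pa\Omega_{-}}\bEta^{0}_{\lL}=\pm\langle\bEta^{0}_{\lL},\theta_{0}\rangle=\pm\langle\bEta^{00}_{\lL}+\beta_{\lL},(\Id+\cB)\theta_{0}\rangle=0$ by \cref{eqn2.22.2}, and the same period over $\pa\Omega_{+}$ vanishes because the two components of $\pa\Omega_{0}$ are homologous in $\Omega_{0}$. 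By the uniqueness statement, the constructed tuple is \emph{the} solution.

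The step I expect to be the main obstacle is the bookkeeping around the single null vector $\theta_{0}$ and the single extra integral constraint \cref{eqn3.24.12}/\cref{eqn1.17.2} that distinguish the two-component case from \Cref{sec:connected}: one must verify in one breath that $F_{\lL}\perp\theta_{0}$ (so that $\tcA_{\lL}^{-1}$ applies), that replacing $\bEta^{00}_{\lL}$ by $\bEta^{00}_{\lL}+c\theta_{0}$ leaves the assembled interior field $\bEta^{0}_{\lL}$ unchanged (it does, since $(\Id+\cB)\theta_{0}=0$), and that the constructed fields are thereby forced to satisfy \cref{eqn1.17.2}. Beyond this, the only real care needed is the consistent use of the sign conventions of \cref{eqn5.36.110} when the integration-by-parts and single-layer jump identities of \Cref{sec:connected} are carried out separately on $\pa\Omega_{+}$ and $\pa\Omega_{-}$.
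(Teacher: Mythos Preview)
Your proposal is correct and follows essentially the same route as the paper's own proof: build $\bEta^{\out\pm}$, then $\beta_{\lL}$, verify $F_{\lL}\perp\theta_0$ via $(\Id+\cB)\theta_0=0$ and an integration by parts, invert $\cA_{\lL}$ on $\theta_0^{\bot}$ to obtain $\bEta^{00}_{\lL}$, and assemble the remaining pieces through the single-layer transmission problem. Your write-up is in fact more explicit than the paper's in the verification step (the paper simply says ``it is not difficult to check''), and your identification of the $\theta_0$-bookkeeping as the one genuinely new wrinkle over \Cref{sec:connected} is exactly right.
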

\begin{remark}
  Recall $\bEta^{\In}_+$ is a harmonic field generated by sources in $\Omega_+,$
  whereas $\bEta^{\In}_-$ is a harmonic field generated by sources in $\Omega_-.$
\end{remark}
\begin{proof}
  From the data we construct the harmonic forms $\bEta^{\out \pm},$ in
  $\Omega_{\pm},$ respectively that satisfy~\eqref{eqn5.2.100}--\eqref{eqn5.4.101}, and thence
  the closed form $\beta_{\lL}$ given
  by~\cref{eq:beta-def-thin-shell,eq:beta-def-thin-shell2}. The closed 2-form
  \begin{equation}
    \psi=\left[dd^*+\frac{1+\cB}{\lL^2}\right]\beta_{\lL}
  \end{equation}
  is orthogonal to $\theta_0:$ as $\langle dd^*\beta_{\lL},\theta_0\rangle$
  obviously vanishes we have that
  \begin{equation}
    \begin{split}
      \lL^2\langle\psi,\theta_0\rangle
      &=\langle(\Id+ \cB)\beta_{\lL},\theta_0\rangle\\
      &=\langle \beta_{\lL},(\Id+\cB)\theta_0\rangle= 0.
    \end{split}
  \end{equation}
  
Thus we can define
  \begin{equation}
    \bEta^{00}_{\lL}=-\cA_{\lL}^{-1}\left[dd^*+\frac{1+\cB}{\lL^2}\right]\beta_{\lL},
  \end{equation}
  and set
  \begin{equation}
    u^{\pm}_{\lL}=B_{\pm}(-i_{n_+}\star(\beta_{\lL}+\bEta^{00}_{\lL}),
    i_{n_-}\star(\beta_{\lL}+\bEta^{00}_{\lL}))\text{ and }\star
    dv_{\lL}=\cB(\beta_{\lL}+\bEta^{00}_{\lL}).
  \end{equation}
  It is not difficult to check that the 2-forms:
  \begin{equation}
    \begin{split}
      \bEta^+_{\lL}=\bEta^{\out +}+\star du^+_{\lL},&\quad 
    \bEta^-_{\lL}=\bEta^{\out -}+\star d u^-_{\lL},\\
    \bEta^0_{\lL}=\bEta^{00}_{\lL}&+\beta_{\lL}+\star dv_{\lL},
     \end{split}
  \end{equation}
  are closed and with $\bj^{0}_{\lL} = d^{*} \bEta^{0}_{\lL}$, satisfy the system of
  equations in~\cref{eq:scat_thin_shell}.
\end{proof}
\noindent
The components of the solution satisfy various estimates, as $\lL\to 0^+,$ that are considered in the
following section.

\subsection{Asymptotics as $\lL\to 0^+$}\label{sec3.3.12}

Using the estimates above on the norms of the operator $\cA_{\lL}^{-1},$ the
analysis in Section~\ref{sec:a-analysis} applies to show that
\begin{equation}
  \|\bEta^{00}_{\lL}\|_{L^2(\Omega_0)}\leq C\lL,\quad \|\bEta^{00}_{\lL}\|_{H^1(\Omega_0)}\leq C;
\end{equation}
interpolation gives
\begin{equation}
  \|\bEta^{00}_{\lL}\|_{H^{\frac 12}(\Omega_0)}\leq C\sqrt{\lL},
\end{equation}
 and therefore Lemma~\ref{tr_lem} gives
\begin{equation}
\|\bEta^{00}_{\lL}\|_{L^2(\pa\Omega_0)}\leq C\sqrt{\lL}.
\end{equation}
From these estimates we conclude that
\begin{equation}
  \|du^{+}_{\lL}\|^2_{L^2(\Omega_+)}+ \|dv^{0}_{\lL}\|^2_{L^2(\Omega_0)}+
  \|du^{-}_{\lL}\|^2_{L^2(\Omega_-)}\leq C\lL^2,
\end{equation}
and
\begin{equation}\label{eqn3.8.6}
   \|u^{+}_{\lL}\|_{H^{\frac 32}_{\loc}(\Omega_+)}\leq C\sqrt{\lL},\quad
   \|v_{\lL}\|_{H^{\frac 32}(\Omega_0)}\leq  C\sqrt{\lL},\text{ and }
  \|u^{-}_{\lL}\|_{H^{\frac 32}(\Omega_-)}\leq C\sqrt{\lL}.
\end{equation}
These estimates suffice to show that the magnetic fields in
$\Omega_{\pm}$ converges to $\bEta^{\In}_{\pm}+\bEta^{\out \pm},$ and
the magnetic field in $\Omega_0$ converges to 0, with the estimates
\begin{equation}
  \begin{split}
    &\|\bEta^{\pm}_{\lL}-\bEta^{\out \pm}\|_{L^2(\Omega_{\pm})}=\|\star du^{\pm}_{\lL}\|_{L^2(\Omega_{\pm})}
    \leq C\lL,\\
      &\|\bEta^0_{\lL}-\beta_{\lL}\|_{L^2(\Omega_0)}=\|\bEta^{00}_{\lL}+\star dv_{\lL}\|_{L^2(\Omega_0)}\leq C\lL.
  \end{split}
\end{equation}

The leading order part of the current, $\bj^0_{\lL},$ is given
$d^*\beta_{\lL}.$ This family of 1-forms converge, in the sense of
distributions, to an explicit current sheet on $\pa\Omega_0.$ For
$\Psi$ an arbitrary smooth 2-form defined in $\overline{\Omega_0}$
we have:
\begin{equation}\label{eqn3.76.13}
  \begin{split}
  \int_{\Omega_0}\bj^0_{\lL}\wedge\Psi&=
  \int_{\Omega_0}d^*\bEta^0_{\lL}\wedge\Psi\\ &=\int_{\pa\Omega_0}\star\bEta^0_{\lL}\wedge\star\Psi+
  \int_{\Omega_0}\bEta^0_{\lL}\wedge
  d^*\Psi\\ &=\int_{\pa\Omega_0+}\star_2\gamma_+\wedge\star\Psi-\int_{\pa\Omega_0-}\star_2\gamma_-\wedge\star\Psi+
  \int_{\Omega_0}\beta_{\lL}\wedge d^*\Psi+O(\sqrt{\lL}\|\Psi\|_{H^1}).
  \end{split}
\end{equation}
Note that the $\beta_{\lL}$-term can be estimated as either
$O(\lL\|d^*\Psi\|_{L^{\infty}}),$ or $O(\sqrt{\lL}\|d^*\Psi\|_{L^{2}}).$

As before in Section~\ref{ss2.3}, these error estimates are not
optimal. While we expect that each term can be improved by a factor of $\lL^{\frac 12},$ 
we now prove that these estimates can be improved by a factor of
$\lL^{\frac 12-\epsilon},$ for any $\epsilon>0.$
\begin{theorem}\label{thm4.110}
  For $0<\epsilon$ there is a constant $C_{\epsilon}$ so that
  \begin{equation}\label{eqn5.70.110}
    \|\bEta^{00}_{\lL}+\star
    dv^0_{\lL}\|_{L^2(\Omega_0)}<C_{\epsilon}\lL^{\frac 32-\epsilon}.
  \end{equation}
  \begin{equation}\label{eqn5.71.110}
    \|(\bEta^{00}_{\lL}+\star
    dv^0_{\lL}+\beta_{\lL})\restrictedto_{\pa\Omega_0}\|_{L^2(\pa\Omega_0)}<C_{\epsilon}\lL^{1-\epsilon}.
  \end{equation}
\end{theorem}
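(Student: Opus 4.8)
The plan is to transcribe the argument of Section~\ref{ss2.3} to the thin--shell geometry, carrying along the extra bookkeeping forced by the two boundary components and by the one--dimensional eigenspace $\Span\theta_0$ on which $\Id+\cB$ degenerates (Proposition~\ref{prop1.9}); throughout one works on the $\tcA_{\lL}$--invariant space of closed $2$--forms orthogonal to $\theta_0$, where $\Id+\cB$ is coercive. As in Section~\ref{ss2.3}, start from $\bEta^{00}_{\lL}+\star dv_{\lL}=(\Id+\cB)\bEta^{00}_{\lL}+\cB\beta_{\lL}$ and substitute $\bEta^{00}_{\lL}=-\cA_{\lL}^{-1}\big([dd^*+\lL^{-2}]\beta_{\lL}+\lL^{-2}\cB\beta_{\lL}\big)$ from~\eqref{eqn2.46.055}, which gives
\begin{equation*}
 \bEta^{00}_{\lL}+\star dv_{\lL}=-(\Id+\cB)\cA_{\lL}^{-1}\big[dd^*+\tfrac1{\lL^2}\big]\beta_{\lL}
 +\Big(\cB\beta_{\lL}-(\Id+\cB)\big[\lL^2 dd^*+\Id+\cB\big]^{-1}\cB\beta_{\lL}\Big).
\end{equation*}
The first term is $O(\lL^{3/2})$ in $L^2(\Omega_0)$ and $O(\lL^{1/2})$ in $H^1(\Omega_0)$, by the norm bounds $\|\cA_{\lL}^{-1}\|_{L^2\to L^2}\le C_0\lL^2$, $\|\cA_{\lL}^{-1}\|_{L^2\to H^1}\le C_1\lL$ established in Section~\ref{ss.exist}, together with $\|[dd^*+\lL^{-2}]\beta_{\lL}\|_{L^2}\le C/\sqrt\lL$ and the $H^s$--boundedness of $\cB$. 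For the second term, \eqref{eqn5.32.107} shows $\cB\beta_{\lL}=E_{\lL}(0)\chi$ with $\chi$ the \emph{fixed} smooth closed $2$--form obtained by applying $-\star dB_0$ to the $\lL$--independent data $(d\gamma^0_{+},-d\gamma^0_{-})$; moreover $\chi\perp\theta_0$, since $\langle\cB\beta_{\lL},\theta_0\rangle=-\langle\beta_{\lL},\theta_0\rangle=0$ by~\eqref{eqn2.22.2} ($\beta_{\lL}$ being closed with vanishing periods over $\pa\Omega_{\pm}$), so $[\lL^2 dd^*+\Id+\cB]^{-1}\chi$ is well defined, and with $E_{\lL}(0)\le C\lL$ the second term is at most $C\lL$ times
\begin{equation*}
 D_{\lL}:=\big\|\lL^2 dd^*\,[\lL^2 dd^*+\Id+\cB]^{-1}\chi\big\|_{L^2(\Omega_0)},
\end{equation*}
and an analogous $H^1$ quantity.

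It therefore suffices to prove, for every $\alpha<\tfrac12$, that $D_{\lL}\le C_{\alpha}\lL^{\alpha}$ and $\|\bEta^{00}_{\lL}+\star dv_{\lL}\|_{H^1(\Omega_0)}\le C_{\alpha}\lL^{\alpha}$ --- the thin--shell analogues of Lemma~\ref{lem3.106} and Proposition~\ref{prop5.106}, now with the genuine operator $\Id+\cB$ rather than the simplified $\Id$. On $\theta_0^{\bot}$ the operator $P:=\Id+\cB$ is self adjoint with $c\,\Id\le P\le\Id$, and $dd^*$ (which there coincides with $\Delta_A$) restricted to $\cZ^2(\Omega_0)\cap\theta_0^{\bot}$ is strictly positive with discrete spectrum, its only null vector in $\cZ^2(\Omega_0)$ being $\theta_0$; hence the harmonic--projection term that complicated Proposition~\ref{prop5.106} is now absent. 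Conjugating, $[\lL^2 dd^*+P]^{-1}=P^{-1/2}[\lL^2 T+\Id]^{-1}P^{-1/2}$ with $T:=P^{-1/2}dd^*P^{-1/2}$ a strictly positive self adjoint operator, one gets $\lL^2 dd^*[\lL^2 dd^*+P]^{-1}\chi=P^{1/2}\big(\lL^2 T[\lL^2 T+\Id]^{-1}\big)(P^{-1/2}\chi)$; the spectral theorem for $T$ and the inequality $x^{2(1-s)}/(1+x)^2\le1$ used in Lemma~\ref{lem3.106} then give $D_{\lL}\le\lL^{2s}\,\|P^{-1/2}\chi\|_{\Dom(T^s)}$, and by interpolation $\Dom(T^s)=P^{1/2}\Dom((dd^*)^s)$. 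The matter thus reduces to $(\Id+\cB)^{-1}\chi\in\Dom((dd^*)^s)$ for $s<\tfrac14$; for this one uses Proposition~\ref{prop1} together with the fact that $\chi$ and every form in $\operatorname{Ran}\cB$ are co--closed forms $\star dh$ with $h$ harmonic, so $(\Id+\cB)^{-1}\chi$ is again of this type and its tangential regularity on $\pa\Omega_0$ is controlled by the order $-1$ single--layer operator there. The $H^1$ estimate is obtained by the same conjugation together with the eigenfunction expansion of $\Delta_A\restrictedto_{\cZ^2(\Omega_0)\cap\theta_0^{\bot}}$, exactly as in Proposition~\ref{prop5.106}. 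This is the main obstacle: $dd^*$ and $\cB$ do not commute and $\Id+\cB$ degenerates along $\theta_0$, so the clean spectral computation of Lemma~\ref{lem3.106} must be pushed through the conjugation by $(\Id+\cB)^{\pm1/2}$, and one must verify that the tangential--regularity input --- which, as the remark closing Section~\ref{ss2.3} stresses, is exactly what fixes the rate --- survives it.

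Finally I would assemble the bounds. The first paragraph gives $\|\bEta^{00}_{\lL}+\star dv_{\lL}\|_{L^2(\Omega_0)}\le C\lL^{3/2}+C_{\alpha}\lL^{1+\alpha}$; taking $\alpha=\tfrac12-\epsilon$ yields~\eqref{eqn5.70.110}, and similarly $\|\bEta^{00}_{\lL}+\star dv_{\lL}\|_{H^1(\Omega_0)}\le C_{\alpha}\lL^{\alpha}$. Interpolating the $L^2$ bound $O(\lL^{1+\alpha})$ with the $H^1$ bound $O(\lL^{\alpha})$ gives $\|\bEta^{00}_{\lL}+\star dv_{\lL}\|_{H^{1/2}(\Omega_0)}\le C_{\alpha}\lL^{1/2+\alpha}$, so, since $\bEta^{00}_{\lL}+\star dv_{\lL}$ is closed, Lemma~\ref{tr_lem} gives $\|(\bEta^{00}_{\lL}+\star dv_{\lL})\restrictedto_{\pa\Omega_0}\|_{L^2(\pa\Omega_0)}\le C_{\alpha}\lL^{1/2+\alpha}$; adding $\|\beta_{\lL}\restrictedto_{\pa\Omega_0}\|_{L^2(\pa\Omega_0)}\le C\lL$ from~\eqref{eqn5.32.107} and setting $\alpha=\tfrac12-\epsilon$ gives~\eqref{eqn5.71.110}.
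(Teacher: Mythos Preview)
Your overall architecture matches the paper's proof exactly: the same splitting of $\bEta^{00}_{\lL}+\star dv_{\lL}$ via~\eqref{eqn2.46.055}, the same $O(\lL^{3/2})$ bound on the first piece, the identification $\cB\beta_{\lL}=E_{\lL}(0)\chi$ with $\chi$ smooth and $\lL$--independent, the conjugation $\tL_{\cB}=(\Id+\cB)^{-1/2}dd^*(\Id+\cB)^{-1/2}$ to a genuinely self adjoint operator, and the final interpolation to $H^{1/2}$ followed by Lemma~\ref{tr_lem}. Your observation that the harmonic projection onto $\Ker\Delta_A$ is here simply the projection onto $\Span\theta_0$, hence absent after restricting to $\theta_0^{\bot}$, is also correct and is used tacitly in the paper.

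The one step where your sketch is too optimistic is the fractional--domain membership. You assert ``by interpolation $\Dom(T^s)=P^{1/2}\Dom((dd^*)^s)$'' and then reduce to $(\Id+\cB)^{-1}\chi\in\Dom((dd^*)^s)$. But $T^s\neq P^{-1/2}(dd^*)^sP^{-1/2}$ in general, since $P^{1/2}$ is not unitary and $dd^*$ and $\cB$ do not commute; conjugation by a non--unitary isomorphism does not intertwine fractional powers. The paper handles this differently and more cleanly. It first proves (Lemma~\ref{prop2} and the subsequent proposition) that $\Id+\cB$ is Fredholm of index~$0$ on every $H^s_{\thop}(\Omega_0;\cZ^2)$, and that $(\Id+\cB)^{\pm1/2}$ are bounded on every $H^s_{\thop}$ via the integral representation~\eqref{eqn3.30.6}; hence $(\Id+\cB)^{-1/2}\chi$ is smooth. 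Then comes the key observation you are missing: since $\cB\restrictedto_{H^2_0(\Omega_0;\cZ^2)}=0$, one has $H^2_0\subset\Dom(\tL_{\cB})$ with $\|\tL_{\cB}\psi\|_{L^2}\le C\|\psi\|_{H^2}$ there, so Calder\'on interpolation between $H^2_0$ and $L^2$ gives $H^{2\alpha}_0\subset\Dom(\tL_{\cB}^{\alpha})$ directly, and smoothness of $(\Id+\cB)^{-1/2}\chi$ finishes it for $\alpha<\tfrac14$. This replaces your appeal to the structure ``$\star dh$ with $h$ harmonic,'' which does not by itself control the boundary behavior needed for fractional--domain membership.
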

\begin{remark}
The proof of this theorem is quite similar to that of Theorem~\ref{thm2.107}, but without the simplifying
assumptions. 
\end{remark}
\begin{proof}[Proof of Theorem~\ref{eqn5.70.110}]
  From~\eqref{eqn5.32.107} it follows that
  $\|\beta_{\lL}\restrictedto_{\pa\Omega_0}\|_{L^2(\pa\Omega_0)}<C\lL,$ so we can restrict our attention to
  $\bEta^{00}_{\lL}+\star dv^0_{\lL}.$
  We first estimate this quantity  in $L^2(\Omega_0).$  Observe that
\begin{multline}\label{eqn5.66.103}
  \bEta^{00}_{\lL}+\star
  dv^0_{\lL}=-(\Id+\cB)\cA^{-1}_{\lL}\left[dd^*+\frac{1}{\lL^2}\right]\beta_{\lL}+\\
  \cB(\beta_{\lL})-(\Id+\cB)(\lL^2 dd^*+\Id+\cB)^{-1}\cB(\beta_{\lL}).
\end{multline}
The $L^2$-norm of the quantity on the first line is $O(\lL^{\frac 32}),$ which
is the expected optimal rate of decay. The $L^2$-norms of the individual terms on the
second line are $O(\lL),$ but note that the operator $\left[\Id-(\Id+\cB)(\lL^2
  dd^*+\Id+\cB)^{-1}\right]$ tends to 0, in the strong $L^2$-sense, as $\lL\to
0^+.$

As before, the value of $\cB(\beta_{\lL})$ depends only on
\begin{equation}
  \beta_{\lL}\restrictedto_{\pa\Omega_0}=E_{\lL}(0)d\gamma_{\pm}\restrictedto_{\pa\Omega_{\pm}}.
\end{equation}
Hence there is a smooth, closed 2-form, $\chi,$ which is independent of $\lL,$ so that
\begin{equation}
  \cB(\beta_{\lL})=E_{\lL}(0)\chi.
\end{equation}
As shown in Section~\ref{sec:beta}, we have the estimate $|E_{\lL}(0)|\leq C\lL.$ 
Thus for any $0\leq s,$ there is a constant $C_s,$ so that
\begin{equation}\label{eqn5.73.107}
  \|\cB(\beta_{\lL})\|_{H^s(\Omega_0)}\leq C_s\lL.
\end{equation}
The problem therefore is to estimate the rate at which
\begin{equation}\label{eqn5.70.103}
D_{\lL}=  \|\left[\Id-(\Id+\cB)(\lL^2 dd^*+\Id+\cB)^{-1}\right]\chi\|_{L^2(\Omega_0)}
\end{equation}
tends to zero, as $\lL\to 0^+.$
\begin{remark}
  In Section~\ref{ss2.3} we estimated the simpler quantity
\begin{equation}
  \|\left[\Id-(\lL^2 dd^*+\Id)^{-1}\right]\chi\|_{L^2(\Omega_0)}
\end{equation}
by using the eigenfunction expansion for the operator $dd^*,$ and the fact that
$\chi\in\Dom([dd^*]^{\alpha})$ for any $\alpha<\frac 14.$
\end{remark}
\begin{proposition}\label{prop8.107}
  For $0<\alpha<\frac 12$ there is constant $C_{\alpha}$ so that
  \begin{equation}
    D_{\lL}\leq C_{\alpha}\lL^{\alpha}.
  \end{equation}
\end{proposition}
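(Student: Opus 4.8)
The plan is to reduce $D_{\lL}$ to a spectral estimate for a self-adjoint operator obtained by symmetrizing $\Id+\cB$, and then to feed in the (tangential) smoothness of $\chi$ through that symmetrization, mirroring the proof of Lemma~\ref{lem3.106}. First I would use the algebraic identity
\[
  \Id-(\Id+\cB)\bigl[\lL^2dd^*+\Id+\cB\bigr]^{-1}=\lL^2dd^*\bigl[\lL^2dd^*+\Id+\cB\bigr]^{-1},
\]
valid on $\theta_0^{\bot}$ where the resolvent exists, so that, writing $T=\Id+\cB$ restricted to $\theta_0^{\bot}$, one has $D_{\lL}=\|\lL^2dd^*(\lL^2dd^*+T)^{-1}\chi\|_{L^2(\Omega_0)}$. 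By Proposition~\ref{prop1.9} there is $c\in(0,1]$ with $cI\le T\le I$ on $\theta_0^{\bot}$, and $\chi\perp\theta_0$ (the orthogonality already needed to define $\bEta^{00}_{\lL}$, which follows because $\beta_{\lL}$ is exact on each component of $\pa\Omega_0$, whence $\langle\cB\beta_{\lL},\theta_0\rangle=-\langle\beta_{\lL},\theta_0\rangle=0$). Setting $\mathcal{A}=T^{-1/2}dd^*T^{-1/2}$, a non-negative self-adjoint operator with discrete spectrum, and using $dd^*T^{-1/2}=T^{1/2}\mathcal{A}$ together with $(\lL^2dd^*+T)^{-1}=T^{-1/2}(\lL^2\mathcal{A}+\Id)^{-1}T^{-1/2}$, I get
\[
  \lL^2dd^*(\lL^2dd^*+T)^{-1}\chi=T^{1/2}\bigl[\lL^2\mathcal{A}(\lL^2\mathcal{A}+\Id)^{-1}\bigr]T^{-1/2}\chi .
\]
Since $\|T^{1/2}\|\le1$, and, exactly as in the estimate preceding~\eqref{eqn4.79.106} (using $x^{2(1-s)}(1+x)^{-2}\le1$), $\|\lL^2\mathcal{A}(\lL^2\mathcal{A}+\Id)^{-1}w\|\le\lL^{2s}\|\mathcal{A}^{s}w\|$ whenever $w\in\Dom(\mathcal{A}^{s})$, $0\le s\le1$, the proposition reduces to showing that $T^{-1/2}\chi\in\Dom(\mathcal{A}^{s})$ for every $s<\tfrac14$, with norm bounded independently of $\lL$ (note that $\cB$, and hence $T$, $\mathcal{A}$, $T^{-1/2}\chi$, are independent of $\lL$).

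The point to watch is that $\chi$ itself is \emph{not} in $\Dom(dd^*)$ (its tangential trace does not vanish), so one cannot diagonalize $dd^*$ directly; the regularity of $g:=(\Id+\cB)^{-1}\chi=T^{-1}\chi$ is what is used instead. I would first show $g\in\cC^{\infty}(\overline{\Omega_0};\Lambda^2)$: from $g=\chi-\cB g$, with $\cB g=-\star dB_0(i^*g)$ a harmonic $2$-form, applying $i^*$ and using~\eqref{eqn2.29.4} gives the second-kind boundary equation $(\tfrac12\Id-S')(i^*g)=i^*\chi$ on $\pa\Omega_0$, where $S'$ is the order $-1$ operator of Proposition~\ref{prop1.9}; since $i^*\chi$ is smooth and $S'$ gains a derivative, a bootstrap from $i^*g\in H^{-1/2}(\pa\Omega_0)$ gives $i^*g\in\cC^{\infty}(\pa\Omega_0)$, so $\cB g$ is a single-layer potential of a smooth density and hence $g=\chi-\cB g$ is smooth up to $\pa\Omega_0$. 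Next I would identify fractional domains: the form identity $\langle\mathcal{A}(T^{1/2}v),T^{1/2}v\rangle=\langle dd^*v,v\rangle$ shows that $T^{1/2}$ is an isomorphism of $L^2$ onto $L^2$ and of the form domain $\Dom((dd^*\!\restrictedto_{\cZ^2})^{1/2})$ onto $\Dom(\mathcal{A}^{1/2})$, so complex interpolation gives $T^{1/2}\colon\Dom((dd^*\!\restrictedto_{\cZ^2})^{s})\to\Dom(\mathcal{A}^{s})$ boundedly for $0\le s\le\tfrac12$. Since $g$ is smooth and closed, Proposition~\ref{prop1} (with $H^{2s}_0=H^{2s}$ for $2s<\tfrac12$) gives $g\in\Dom((dd^*\!\restrictedto_{\cZ^2})^{s})$ for $s<\tfrac14$, hence $T^{-1/2}\chi=T^{1/2}g\in\Dom(\mathcal{A}^{s})$ for $s<\tfrac14$ with $\|\mathcal{A}^{s}(T^{-1/2}\chi)\|\le C_s$. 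Combining, $D_{\lL}\le C_s\lL^{2s}$ for every $s<\tfrac14$, i.e. $D_{\lL}\le C_{\alpha}\lL^{\alpha}$ for every $\alpha<\tfrac12$.

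The main obstacle I expect is precisely these last two ingredients — the smoothness of $(\Id+\cB)^{-1}\chi$ up to $\pa\Omega_0$ (via the second-kind equation and an elliptic bootstrap) and the interpolation identity $\Dom(\mathcal{A}^{s})=T^{1/2}\Dom((dd^*\!\restrictedto_{\cZ^2})^{s})$ for the \emph{form} domains — after which the estimate is the same spectral bookkeeping as for $\tD_{\lL}$. Some care is needed to interpolate the form domains (carrying only the condition $\star\,\cdot\!\restrictedto_{\pa\Omega_0}=0$) rather than the operator domains, and to keep the exponent in the range $2s<\tfrac12$, so that it is genuinely the smoothness of $g$, and not any Cauchy-data compatibility at $\pa\Omega_0$, that enters.
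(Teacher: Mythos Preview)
Your proposal is correct and follows the same overall strategy as the paper: symmetrize via $T^{\pm 1/2}=(\Id+\cB)^{\pm 1/2}$ to pass to the self-adjoint operator $\mathcal{A}=\tL_{\cB}=T^{-1/2}dd^*T^{-1/2}$, then show $T^{-1/2}\chi\in\Dom(\mathcal{A}^{s})$ for $s<\tfrac14$ and apply the same spectral bookkeeping as for $\tD_{\lL}$.

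The paper handles the two obstacles you flag a bit differently, and more directly. First, rather than bootstrapping the boundary equation for $g=T^{-1}\chi$, it proves once and for all that $\Id+\cB$ is Fredholm of index~$0$ on every $H^s_{\thop}(\Omega_0;\cZ^2)$ (Lemma~\ref{prop2}), and then that $(\Id+\cB)^{\pm 1/2}$ are bounded on $H^s$ for all $s\ge 0$ via the contour-integral formula for the square root; so $T^{-1/2}\chi$ itself is smooth up to $\pa\Omega_0$. Second, instead of intertwining fractional domains through $T^{1/2}$, the paper observes that $\cB$ vanishes on $H^2_0(\Omega_0;\cZ^2)$ (these forms restrict to zero on $\pa\Omega_0$), so $(\Id+\cB)^{\pm 1/2}$ act as the identity there and $\tL_{\cB}\restrictedto_{H^2_0}=dd^*$; this gives $H^2_0\subset\Dom(\tL_{\cB})$ with the obvious norm bound, and interpolating between that and $L^2$ (exactly as in Proposition~\ref{prop1}) yields $H^{2\alpha}_0\subset\Dom(\tL_{\cB}^{\alpha})$ directly, without passing through $\Dom((dd^*)^{s})$. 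Your route is a legitimate alternative; the paper's buys a cleaner avoidance of the form-domain interpolation you were worried about.
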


To prove these estimates for $D_{\lL}$ we use the eigenfunction expansion for the
operator $dd^*,$ acting on the space of closed 2-forms
satisfying~\eqref{eqn2.22.2}, with inner product defined by
\begin{equation}
  \langle\theta,\psi\rangle_{\cB}=\langle(\Id+\cB)\theta,\psi\rangle.
\end{equation}
To set this up requires some effort.

Since $(\Id+\cB)\theta_0=0,$ we restrict to the subspace of $L^2(\Omega_0;\cZ^2)$  with
\begin{equation}
  \int_{\Omega_0}\bEta\wedge\star\theta_0=0,
\end{equation}
which we denote by $L^2_{\thop}(\Omega_0;\cZ^2).$  For $s\geq 0,$ we let
$$H^s_{\thop}(\Omega_0;\cZ^2)=H^s(\Omega_0;\cZ^2)\cap L^2_{\thop}(\Omega_0;\cZ^2).$$
It follows from Proposition~\ref{prop1.9} and~\eqref{eqn2.33.3} that the norm
defined by $ \langle\cdot,\cdot\rangle_{\cB}$ on $L^2_{\thop}(\Omega_0;\cZ^2)$
is equivalent to the standard $L^2$-norm.  The operator we employ is defined by the
quadratic form
\begin{equation}
  \cQ_{\cB}(\theta,\psi)=\int_{\Omega_0}d^*\theta\wedge\star d^*\overline{\psi},
\end{equation}
with $\Dom(\cQ_{\cB})=\{\bEta\in H^1_{\thop}(\Omega_0;\cZ^2):\:
\star\bEta_{\restrictedto_{\pa\Omega_0}}=0\}.$ The domain of the associated
self adjoint operator, $L_{\cB},$ consists of $\bEta\in\Dom(\cQ_{\cB})$ such that
there is a $\chi\in L^2(\Omega_0;\cZ^2)$ for which
\begin{equation}
  \cQ_{\cB}(\bEta,\psi)=\langle(\Id+\cB)\chi,\psi\rangle,\text{ for all }\psi\in\Dom(\cQ_{\cB}),
\end{equation}
which is easily seen to be $\{\bEta\in
H^2_{\thop}(\Omega_0;\cZ^2):\:\star\bEta_{\restrictedto_{\pa\Omega_0}}=0\}.$
The null-space of this operator would consist of harmonic 2-forms that satisfy
the boundary condition $\star\bEta\restrictedto_{\pa\Omega_0}=0.$ The Hodge
decomposition shows that the null-space is isomorphic to deRham cohomology group
$H^2_{\dR}(\Omega_0),$ see~\cite{Taylor1}. As shown above, this 1-dimensional
cohomology group is generated by the form $\theta_0$ defined in~\Cref{prop1.9},
and therefore $L_{\cB}$ is invertible on $L^2_{\thop}(\Omega;\cZ^2).$

The operator $L_{\cB}$ has a complete orthonormal family of eigenpairs
$\{(\psi_j,\nu_j):\: j=1,\dots\},$ which satisfy
\begin{equation}\label{eqn3.19.6}
  d\psi_j=0,\, dd^*\psi_j=\nu_j(\Id+\cB)\psi_j\text{ and }
  \langle(\Id+\cB)\psi_j,\psi_k\rangle=\delta_{jk}.
\end{equation}
A 2-form $\bEta\in L^2_{\thop}(\Omega_0;\cZ^2)$ has an expansion
\begin{equation}
  \bEta\sim\sum_{j=0}^{\infty}a_j\psi_j\text{ where
  }a_j=\langle(\Id+\cB)\bEta,\psi_j\rangle,
\end{equation}
and
\begin{equation}
  \langle(\Id+\cB)\bEta,\bEta\rangle=\sum_{j=0}^{\infty}|a_j|^2.
\end{equation}
The operator $(\Id+\cB)$ is positive and invertible on
$L^2_{\thop}(\Omega_0;\cZ^2);$  it is somewhat easier to work with the
operator $(\Id+\cB)^{-\frac 12}dd^* (\Id+\cB)^{-\frac 12},$ which is self
adjoint with respect to the usual inner product. To analyze this operator we
need to understand the mapping properties of $(\Id+\cB)^{-1}$ acting on
$H^s(\Omega_0;\cZ^2).$ As a first step we prove:

\begin{lemma}\label{prop2}
  For any $s\geq 0,$ the operator $\Id+\cB:H^s(\Omega_0;\cZ^2)\to H^s(\Omega_0;\cZ^2)$ is a
  Fredholm operator of index 0, with null-space spanned by $\theta_0.$
\end{lemma}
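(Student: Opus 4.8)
The plan is to reduce the question to the boundary $\pa\Omega_0$, since $\cB$ itself is \emph{not} compact on $H^s(\Omega_0;\cZ^2)$ and so the $L^2$ spectral picture of Proposition~\ref{prop1.9} cannot be transferred directly. Writing $G_0=-\star d\,B_0$, we have $\cB=G_0\circ i^*$ with $i^*$ as in~\eqref{eqn5.36.110}. First I would record the mapping properties: by Lemma~\ref{tr_lem}, $i^*$ is bounded from $H^s(\Omega_0;\cZ^2)$ into $H^{s-\frac12}(\pa\Omega_0)^{\oplus 2}$, while the single layer potential is bounded from $H^{s-\frac12}(\pa\Omega_0)$ into $H^{s+1}(\Omega_0)$ and $\star d$ is bounded from $H^{s+1}(\Omega_0)$ into the closed $2$-forms of $H^s(\Omega_0;\cZ^2)$ (the image lands in closed forms because a single layer potential is harmonic), so $G_0$ is bounded from $H^{s-\frac12}(\pa\Omega_0)^{\oplus 2}$ into $H^s(\Omega_0;\cZ^2)$. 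The essential input is the boundary reduction identity, which is exactly the content of the computation behind~\eqref{eqn2.29.4}, valid on all of $H^{s-\frac12}(\pa\Omega_0)^{\oplus 2}$ since that computation uses only the jump relations for the single layer:
\begin{equation}
  i^*\circ G_0=-\Bigl(\tfrac12\Id+S'\Bigr).
\end{equation}
Here $S'$ is the block operator from~\eqref{eqn2.29.4}; its diagonal blocks are the classical order $-1$ pseudodifferential operators on the smooth closed surfaces $\pa\Omega_\pm$, and its off-diagonal blocks have smooth kernels because the two components are disjoint, so $S'$ is compact on $H^t(\pa\Omega_0)^{\oplus 2}$ for every $t$, and $\tfrac12\Id-S'$ is Fredholm of index $0$ on each such space.

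Next I would run a Schur-complement argument. With $\cX=H^s(\Omega_0;\cZ^2)$ and $\cY=H^{s-\frac12}(\pa\Omega_0)^{\oplus 2}$, consider the bounded operator
\begin{equation}
  M=\begin{pmatrix}\Id&G_0\\ -i^*&\Id\end{pmatrix}\colon\cX\oplus\cY\longrightarrow\cX\oplus\cY .
\end{equation}
The unipotent factor $E=\begin{pmatrix}\Id&0\\ i^*&\Id\end{pmatrix}$ is bounded and invertible on $\cX\oplus\cY$ (its inverse changes the sign of the off-diagonal entry), and one computes $ME=\begin{pmatrix}\Id+\cB&G_0\\ 0&\Id\end{pmatrix}$ and $EM=\begin{pmatrix}\Id&G_0\\ 0&\tfrac12\Id-S'\end{pmatrix}$. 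The second form is block upper triangular with diagonal entries $\Id$ and the Fredholm-index-$0$ operator $\tfrac12\Id-S'$, hence is Fredholm of index $0$; therefore $M$, and then $ME$, is Fredholm of index $0$. Since a block upper triangular operator is Fredholm if and only if its diagonal blocks are, with index the sum of their indices, the first form shows that $\Id+\cB\colon H^s(\Omega_0;\cZ^2)\to H^s(\Omega_0;\cZ^2)$ is Fredholm of index $0$ for every $s\geq 0$. Only routine verifications of boundedness and of the two matrix identities are involved in this step.

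Finally I would identify the null-space. If $(\Id+\cB)\theta=0$, applying $i^*$ and using $i^*\cB=-(\tfrac12\Id+S')\,i^*$ gives $(\tfrac12\Id-S')\,i^*\theta=0$, so $i^*\theta$ lies in the $\tfrac12$-eigenspace of $S'$; by the analysis in the proof of Proposition~\ref{prop1.9} this eigenspace is one dimensional and spanned by $g_0\,dS_{\pa\Omega_0}=i^*\theta_0$, whence $i^*\theta=c\,i^*\theta_0$ for some scalar $c$. Then $\cB\theta=G_0(i^*\theta)=c\,\cB\theta_0=-c\,\theta_0$, and $0=(\Id+\cB)\theta=\theta-c\,\theta_0$ forces $\theta=c\,\theta_0$; since $\theta_0$ is smooth up to $\pa\Omega_0$ and satisfies $(\Id+\cB)\theta_0=0$, we conclude $\Ker(\Id+\cB)=\Span\{\theta_0\}$, one dimensional and consistent with the index being $0$. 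The main obstacle is precisely the non-compactness of $\cB$ on $H^s$: everything hinges on recognizing that, after reducing to $\pa\Omega_0$, the operator $\tfrac12\Id+S'$ governing the problem differs from an invertible operator by the \emph{smoothing} operator $S'$, which is what makes the boundary reduction—and hence the $H^s$ Fredholm statement—go through.
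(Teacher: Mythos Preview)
Your proof is correct and takes a genuinely different route from the paper's. The paper constructs an explicit orthogonal projector $\cP$ onto $\Im\cB$ by setting $\cW=(\tfrac12\Id+S'+\tau_0\otimes\tau_0^*)^{-1}$ and $\cP=\star d\cS\circ\cW\,i^*$, then writes $\cW=2\Id+K$ with $K$ of order $-1$ to obtain $\Id+\cB=(\Id-\tfrac{\cP}{2})-\cK$ with $\cK=\tfrac12\star d\cS\circ K i^*$ smoothing by one degree; since $\Id-\tfrac{\cP}{2}$ is invertible with inverse $\Id+\cP$, Fredholmness of index $0$ follows. Your argument instead packages $\cB=G_0\,i^*$ and the jump identity $i^*G_0=-(\tfrac12\Id+S')$ into a $2\times 2$ Schur-complement factorization, reducing the question directly to the compact-perturbation $\tfrac12\Id-S'$ on the boundary.

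Both approaches rest on the same analytic fact---that $S'$ is order $-1$ with smooth off-diagonal blocks---but exploit it differently. Your route is cleaner and more conceptual, making transparent that the Fredholm property of the volumetric operator is \emph{equivalent} to that of the boundary operator. The paper's route, on the other hand, yields as a byproduct the structural decomposition $\Id+\cB=(\Id-\tfrac{\cP}{2})-\cK$, which it immediately reuses in the next proposition to obtain uniform $H^s$ bounds on $(\Id+\cB+\zeta)^{-1}$ for $\zeta\in[0,\infty)$; your Schur factorization does not hand you that resolvent estimate directly. The null-space identification is essentially the same in both arguments.
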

\begin{proof}
To prove this we construct the orthogonal projector onto the range of $\cB,$
which  is done using~\eqref{eqn2.29.4} and the eigenfunction expansion,
$\{(\theta_j,\mu_j)\},$ defined in~\eqref{eqn2.32.4}--\eqref{eqn2.34.4}. We note
that $(\Id/2+S')$ has a 1-dimensional null-space spanned by
$\tau_0=\theta_0\restrictedto_{\pa\Omega_0}.$ If we define the operator
\begin{equation}
  \tau_0\otimes \tau_0^*(\psi)=\langle\psi,\tau_0\rangle\tau_0,
\end{equation}
then, for any $s\geq 0,$ operator $(\Id/2+S'+\tau_0\otimes \tau_0^*):H^s(\pa\Omega_0) \to
H^s(\pa\Omega_0)$ is an invertible Fredholm operator of index zero.

Let
\begin{equation}
  \cW=\left(\frac{\Id}{2}+S'+\tau_0\otimes \tau_0^*\right)^{-1},
\end{equation}
this is a bounded operator from $H^s(\pa\Omega_0)$ to $H^s(\pa\Omega_0).$ We
define the projector, $\cP$ onto the range of $\cB$ by
\begin{equation}
  \cP(\theta)=\star d\cS\circ\cW\, i^*\theta,\text{ where
  }i^*\theta=(i_{n_+}\theta\restrictedto_{\pa\Omega_{+}},-i_{n_-}\theta\restrictedto_{\pa\Omega_{-}}).
\end{equation}
It follows from~\eqref{eqn2.29.4} that $\cP\theta_j=\theta_j$ for $j\geq 1.$
This shows that the range of $\cP$ contains that of $\cB,$ and that $\cP$
reduces to the identity on the range of $\cB.$ If $\cB\theta=0,$ then
$\theta\restrictedto_{\pa\Omega_0}=a\tau_0,$ for some $a,$ and therefore
$\cP\theta=0,$ as well. As $\cB$ is self adjoint, this shows that $\cP$ is a
projection onto the $\Im\cB,$ with $\Ker\cP=\Ker\cB=[\Im\cB]^{\bot},$ proving
that $\cP^*=\cP.$

  From its definition it is clear that $\cP:H^s(\Omega_0)\to H^s(\Omega_0)$ boundedly
  for any $s\geq 0.$ The operator $\cW=2\Id+K,$ where $K$ is pseudodifferential
  operator of order $-1.$ This shows that
  \begin{equation}
    \cB=-\frac{\cP+\star d\cS\circ Ki^*}{2}.
  \end{equation}
  The operator $\cK=\frac{1}{2}\star d\cS\circ Ki^*:H^s(\Omega_0;\cZ^2)\to
  H^{s+1}(\Omega_0;\cZ^2),$ for any $s\geq 0,$
  and
  \begin{equation}
    \Id+\cB=\Id-\frac{\cP}{2}-\cK.
  \end{equation}
  As $(\Id-\frac{\cP}{2})^{-1}=\Id+\cP,$ it follows that
  $(\Id+\cB):H^s(\Omega_0;\cZ^2)\to H^s(\Omega_0;\cZ^2)$ is Fredholm of index
  0. The null-space in $L^2(\Omega_0;\cZ^2)$ is spanned by $\theta_0,$ which belongs to
  $H^s(\Omega_0;\cZ^2)$ for all $s\geq 0.$
\end{proof}

As a consequence of the lemma we see that
\begin{equation}
  \Id+\cB+\theta_0\otimes\theta_0^*:H^s(\Omega_0;\cZ^2)\to H^s(\Omega_0;\cZ^2),
  \end{equation}
is boundedly invertible for any $s\geq 0.$ Restricted to $\theta_0^{\bot}$ this
inverse gives the solution to the equation $(\Id+\cB)u=\psi$ that is orthogonal
to $\theta_0.$ We denote this restriction by $(\Id+\cB)^{-1}.$ As an operator on
$L^2_{\thop}(\Omega_0;\cZ^2),$ $(\Id+\cB)^{-1}$ is positive definite and
therefore has a positive square root given by the integral
\begin{equation}\label{eqn3.30.6}
  (\Id+\cB)^{-\frac 12}=\frac{1}{\pi}\int_{0}^{\infty}\zeta^{-\frac 12}(\Id+\cB+\zeta)^{-1}d\zeta,
\end{equation}
see~\cite{Kato}.
\begin{proposition}
  For any $0\leq s,$ the operators $(\Id+\cB)^{-\frac 12},\, (\Id+\cB)^{\frac
    12}$ are bounded on $H^s_{\thop}(\Omega_0;\cZ^2).$
\end{proposition}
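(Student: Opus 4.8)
The plan is to exploit the parametrix decomposition of $\cB$ established in the proof of Lemma~\ref{prop2}: writing $\cB=-\tfrac12\cP-\cK$, where $\cP=\cP^2=\cP^*$ is the projector onto $\Im\cB$ that is bounded on $H^s(\Omega_0;\cZ^2)$ for every $s\ge 0$, and $\cK$ maps $H^s(\Omega_0;\cZ^2)$ boundedly into $H^{s+1}(\Omega_0;\cZ^2)$. Since $\cB$ is self adjoint with $\cB\theta_0=-\theta_0$, the subspace $\theta_0^{\bot}$ is $\cB$-invariant; because $\langle\cP u,\theta_0\rangle=\langle u,\cP\theta_0\rangle=\langle u,\theta_0\rangle$, the projector $\cP$ (hence also $\cK$ and $\Id+\cB$) preserves $\theta_0^{\bot}$, and as $\theta_0\in\cC^\infty(\overline{\Omega_0};\Lambda^2)$, all of these operators restrict to bounded operators on $H^s_{\thop}(\Omega_0;\cZ^2)$. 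In view of the integral representation~\eqref{eqn3.30.6}, the whole statement will follow once I establish the uniform resolvent bound
$$\|(\Id+\cB+\zeta)^{-1}\|_{H^s_{\thop}\to H^s_{\thop}}\le \frac{C_s}{1+\zeta},\qquad \zeta\ge 0 .$$

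For large $\zeta$ I would argue by a Neumann series. Write $\Id+\cB+\zeta=\big[(1+\zeta)\Id-\tfrac{\cP}{2}\big]-\cK$; using the elementary identity $(\Id-a\cP)^{-1}=\Id+\tfrac{a}{1-a}\cP$ for a projector, the first bracket is invertible on $H^s$ with inverse $R_\zeta=\tfrac{1}{1+\zeta}\big(\Id+\tfrac{1}{2\zeta+1}\cP\big)$, so $\|R_\zeta\|_{H^s\to H^s}\le (1+\|\cP\|_{H^s})/(1+\zeta)$. Then $\Id+\cB+\zeta=\big[(1+\zeta)\Id-\tfrac{\cP}{2}\big](\Id-R_\zeta\cK)$, and since $\cK$ is bounded on $H^s$ with a norm not depending on $\zeta$, one has $\|R_\zeta\cK\|_{H^s\to H^s}\le C_1/(1+\zeta)\le\tfrac12$ for $\zeta\ge\zeta_0:=2C_1$; inverting the Neumann series gives $\|(\Id+\cB+\zeta)^{-1}\|_{H^s\to H^s}\le 2\|R_\zeta\|_{H^s\to H^s}\le C_s/(1+\zeta)$ there (and the same restricted to $H^s_{\thop}$).

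For $0\le\zeta\le\zeta_0$ I would use Fredholm theory. Because $\cK$ maps $H^s$ to $H^{s+1}$ and $\Omega_0$ is bounded, $\cK$ is compact on $H^s_{\thop}(\Omega_0;\cZ^2)$; hence $\Id+\cB+\zeta$ is a compact perturbation of the invertible operator $(1+\zeta)\Id-\tfrac{\cP}{2}$ and is Fredholm of index $0$ on $H^s_{\thop}$. By Proposition~\ref{prop1.9} the $L^2_{\thop}$-spectrum of $\Id+\cB$ is a compact subset of $(0,1]$, so $\Id+\cB+\zeta\ge c+\zeta>0$ there for some $c>0$, making $\Id+\cB+\zeta$ injective on $L^2_{\thop}\supset H^s_{\thop}$ for all $\zeta\ge 0$; combined with index $0$ this gives invertibility on $H^s_{\thop}$ for each $\zeta\in[0,\zeta_0]$. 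Since $\zeta\mapsto\Id+\cB+\zeta$ is norm-continuous into $\mathcal B(H^s_{\thop})$ and inversion is continuous on the open set of invertibles, $\sup_{\zeta\in[0,\zeta_0]}\|(\Id+\cB+\zeta)^{-1}\|_{H^s_{\thop}}<\infty$, which together with the large-$\zeta$ estimate yields the displayed bound.

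Finally, with this bound in hand, $\int_0^\infty\zeta^{-1/2}(1+\zeta)^{-1}\,d\zeta<\infty$ (a $\zeta^{-1/2}$ singularity at $0$ and a $\zeta^{-3/2}$ tail), so the Bochner integral $\tfrac1\pi\int_0^\infty\zeta^{-1/2}(\Id+\cB+\zeta)^{-1}\,d\zeta$ converges absolutely in $\mathcal B\big(H^s_{\thop}(\Omega_0;\cZ^2)\big)$; by uniqueness of inverses it coincides on $H^s_{\thop}$ with the operator $(\Id+\cB)^{-\frac12}$ defined via the same formula on $L^2_{\thop}$, proving $(\Id+\cB)^{-\frac12}$ is bounded on $H^s_{\thop}(\Omega_0;\cZ^2)$. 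For the other operator I would simply write $(\Id+\cB)^{\frac12}=(\Id+\cB)(\Id+\cB)^{-\frac12}$ on $\theta_0^{\bot}$ and note both factors are bounded on $H^s_{\thop}(\Omega_0;\cZ^2)$. I expect the only delicate point to be the small-$\zeta$ step: one must combine the index-zero property (coming from the one-order smoothing of $\cK$) with $L^2_{\thop}$-injectivity (coming from the strict positivity in Proposition~\ref{prop1.9}) to get invertibility on $H^s_{\thop}$, and then promote pointwise invertibility to a uniform operator-norm bound using compactness of $[0,\zeta_0]$.
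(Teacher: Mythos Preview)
Your proof is correct and follows essentially the same route as the paper: both use the decomposition $\cB=-\tfrac12\cP-\cK$ from Lemma~\ref{prop2} to obtain the resolvent bound $\|(\Id+\cB+\zeta)^{-1}\|_{H^s\to H^s}\le C_s/(1+\zeta)$, then feed it into the integral formula~\eqref{eqn3.30.6}, and finally get $(\Id+\cB)^{1/2}$ by composing with $\Id+\cB$. The only difference is presentational: the paper writes the resolvent explicitly as $\tfrac{1}{1+\zeta}\bigl[\Id+\tfrac{\cP}{1+2\zeta}\bigr]\bigl[\Id-\tfrac{\cK}{1+\zeta}\bigl(\Id+\tfrac{\cP}{1+2\zeta}\bigr)\bigr]^{-1}$ and asserts the bound for all $\zeta\ge 0$ in one stroke, whereas you split into a large-$\zeta$ Neumann-series regime and a small-$\zeta$ Fredholm/compactness argument; your small-$\zeta$ step makes explicit the invertibility on $H^s_{\thop}$ that the paper's formula leaves to the reader.
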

\begin{proof}
Using the expression
\begin{equation}
  (\Id+\cB+\zeta)^{-1}=\frac{1}{1+\zeta}\left[\Id+\frac{\cP}{1+2\zeta}\right]
  \left[\Id-\frac{\cK}{1+\zeta}\left(\Id+\frac{\cP}{1+2\zeta}\right)\right]^{-1},
\end{equation}
it is not difficult to show that, for $\zeta\in [0,\infty)$ and $s\geq 0,$ there
  is a $C_s$ so that
\begin{equation}
  \|(\Id+\cB+\zeta)^{-1}\|_{H^s\to H^s}\leq \frac{C_s}{1+\zeta},
\end{equation}
and therefore~\eqref{eqn3.30.6} implies that $ (\Id+\cB)^{-\frac
  12}:H^s\to H^s$ boundedly. As
\begin{equation}
  (\Id+\cB)^{\frac 12}=(\Id+\cB)^{-\frac 12}(\Id+\cB),
\end{equation}
Lemma~\ref{prop2} shows that the same result holds for $ (\Id+\cB)^{\frac 12}.$
\end{proof}

 \begin{proof}[Proof of Proposition~\ref{prop8.107}]
From~\eqref{eqn5.70.103} it follows that the operator we need to estimate is
\begin{multline}
  \Id-(\Id+\cB)(\lL^2 dd^*+\Id+\cB)^{-1}=\\
  (\Id+\cB)^{\frac 12}\left[\Id-(\lL^2(\Id+\cB)^{-\frac 12}dd^*(\Id+\cB)^{-\frac 12}+\Id)^{-1}\right](\Id+\cB)^{-\frac 12}.
\end{multline}
The operator
\begin{multline}
  \tL_{\cB}=(\Id+\cB)^{-\frac 12}dd^*(\Id+\cB)^{-\frac 12}=(\Id+\cB)^{-\frac 12}L_{\cB}(\Id+\cB)^{-\frac 12}\\\text{ with
domain }\Dom(\tL_{\cB})=(\Id+\cB)^{\frac 12}\Dom(L_{\cB})
\end{multline}
is self adjoint on
$L^2_{\thop}(\Omega_0;\cZ^2),$ and somewhat easier to work with than $L_{\cB}.$
If  $\tpsi_j=(\Id+\cB)^{\frac 12}\psi_j,$ then
\begin{equation}
\tL_{\cB}\tpsi_j=  (\Id+\cB)^{-\frac 12}L_{\cB}(\Id+\cB)^{-\frac 12}\tpsi_j=\nu_j\tpsi_j,
\end{equation}
and
\begin{equation}
  \langle \tpsi_j,\tpsi_k\rangle_{L^22(\Omega_0)}=\delta_{jk},
\end{equation}
see~\eqref{eqn3.19.6}.

The quantity $ D_{\lL}^2$ can be rewritten as
\begin{equation}
  \begin{split}
    D_{\lL}^2&=\left\| (\Id+\cB)^{\frac 12}\left[\Id-(\lL^2\tL_{\cB}
      +\Id)^{-1}\right](\Id+\cB)^{-\frac
    12}\chi\right\|^2_{L^2(\Omega_0)}\\
  &\leq C\left\|\left[\Id-(\lL^2\tL_{\cB}+\Id)^{-1}\right](\Id+\cB)^{-\frac 12}\chi\right\|^2_{L^2(\Omega_0)} .
  \end{split}
\end{equation}
If we write
\begin{equation}
  (\Id+\cB)^{-\frac 12}\chi=\sum_{j=1}^{\infty}b_j\tpsi_j,
\end{equation}
then
\begin{equation}
D_{\lL}^2\leq C\sum_{j=1}^{\infty}\frac{\lL^4\nu_j^2|b_j|^2}{(1+\lL^2\nu_j)^2}.
\end{equation}

We observe that both $\cB\restrictedto_{H^2_0(\Omega_0;\cZ^2)}=0,$ and
 $H^2_0(\Omega_0;\cZ^2)\subset H^2_{\thop}(\Omega_0;\cZ^2),$  and therefore
 $H^2_0(\Omega_0;\cZ^2)\subset \Dom(\tL_{\cB}).$ For $\psi\in
 H^2_0(\Omega_0;\cZ^2)$ we also have $(\Id+\cB)^{\pm\frac
   12}\psi=\psi,$  and therefore the  estimate
\begin{equation}
  \|\tL_{\cB}\psi\|_{L^2(\Omega_0)}\leq C\|\psi\|_{H^2(\Omega_0)}\text{ for }\psi\in H^2_0(\Omega_0;\cZ^2)
\end{equation}
is obvious.
Since $L^2_{\thop}(\Omega_0;\cZ^2)=\Dom(\tL_{\cB}^{0}),$ we can use the interpolation
argument used to prove Proposition~\ref{prop1} to conclude that, for $\alpha\in (0,1)\setminus\{\frac 14,\,\frac 34\},$
\begin{equation}
  H^{2\alpha}_0(\Omega_0;\cZ^2)\subset \Dom(\tL_{\cB}^{\alpha}),
\end{equation}
see~\cite{Calderon1964,Fu1967,Taylor1}.
In particular, as $\cC^{\infty}(\overline{\Omega}_0;\cZ^2)\subset
H^{2\alpha}(\Omega_0;\cZ^2)=H^{2\alpha}_0(\Omega_0;\cZ^2)$ for $\alpha<\frac
14,$ we see that  $ (\Id+\cB)^{-\frac 12}\chi\in \Dom(\tL_{\cB}^{\alpha})$ for $\alpha<\frac
14.$ As $\tL_{\cB}$ is self adjoint, this shows that for $\alpha<\frac 14$
\begin{equation}
  C_{\alpha}=\sum_{j=1}^{\infty}\nu_j^{2\alpha}|b_j|^2<\infty.
\end{equation}
Using this fact we see that 
\begin{equation}
  D_{\lL}^2\leq C\lL^{4\alpha}\sum_{j=1}^{\infty}\frac{(\lL^2\nu_j)^{2-2\alpha}\nu_j^{2\alpha}|b_j|^2}
  {(1+\lL^2\nu_j)^2}\leq C_{\alpha}\lL^{4\alpha}\text{ for }\alpha<\frac 14,
\end{equation}
which completes the proof of Proposition~\ref{prop8.107}.
 \end{proof}

We combine Proposition~\ref{prop8.107} with the results
between~\eqref{eqn5.66.103} and~\eqref{eqn5.70.103} to complete the
proof of~\eqref{eqn5.70.110}. To prove~\eqref{eqn5.71.110} it remains
to bound $\|\bEta^{00}_{\lL}+\star dv^0_{\lL}\|_{H^1(\Omega_0)}.$
Using~\eqref{eqn2.15.2} and~\eqref{eqn2.71.05} we see that the
$H^1(\Omega_0)$-norm of the first term on the right hand side
of~\eqref{eqn5.66.103} is $O(\sqrt{\lL}).$ It remains to estimate
$$\| \cB(\beta_{\lL})-(\Id+\cB)(\lL^2
dd^*+\Id+\cB)^{-1}\cB(\beta_{\lL})\|_{H^1(\Omega_0)}.$$
From~\eqref{eqn5.73.107} it follows that
\begin{equation}
  \|\cB(\beta_{\lL})\|_{H^1(\Omega_0)}\leq C\lL.
\end{equation}

To estimate the last term we observe that, if
$\psi\in\Dom(\tL_{\cB}),$  then
\begin{equation}\label{eqn3.43.5}
  \langle \tL_{\cB}\psi,\psi\rangle=\| d^*(\Id+\cB)^{-\frac 12}\psi\|^2_{L^2(\Omega_0)}.
\end{equation}
As $(\Id+\cB)^{-\frac 12}\psi\in\Dom(dd^*)$ is closed, it follows from Proposition 9.4 in~\cite{Taylor1} that
\begin{equation}\label{eqn3.44.5}
  \| d^*(\Id+\cB)^{-\frac 12}\psi\|^2_{L^2(\Omega_0)}\geq C \| (\Id+\cB)^{-\frac
    12}\psi\|^2_{H^1(\Omega_0)}
  \geq \|\psi\|_{H^1(\Omega_0)}.
\end{equation}
Recalling that  $\cB(\beta_{\lL})=E_{\lL}(0)\chi,$  we are left
to estimate $H^1(\Omega_0)$-norm of
\begin{equation}
 \left[(\lL^2(\Id+\cB)^{-\frac
      12}dd^*(\Id+\cB)^{-\frac 12}+\Id)^{-1}\right](\Id+\cB)^{-\frac
    12}\chi=\sum_{j=1}^{\infty}\frac{b_j\tpsi_j}{1+\lL^2\mu_j}.
\end{equation}
Using~\eqref{eqn3.43.5}--\eqref{eqn3.44.5} and the fact that
$(\Id+\cB)^{-\frac 12}\chi\in\Dom(\tL_{\cB}^{\alpha/2})$ for
$\alpha<\frac 12,$ we obtain
\begin{equation}
  \begin{split}
  \left\|\left[(\lL^2(\Id+\cB)^{-\frac
      12}dd^*(\Id+\cB)^{-\frac 12}+\Id)^{-1}\right](\Id+\cB)^{-\frac
    12}\chi\right\|_{H^1(\Omega_0)}^2&\leq C\sum_{j=1}^{\infty}\frac{\nu_j|b_j|^2}{(1+\lL^2\nu_j)^2}\\
  &\leq C\lL^{2\alpha-2}\sum_{j=1}^{\infty}\frac{\nu_j^{\alpha}|b_j|^2 (\lL^2\nu_j)^{1-\alpha}}{(1+\lL^2\nu_j)^2}\\
  &\leq C_{\alpha}\lL^{2\alpha-2}
  \end{split}
\end{equation}
where $C_{\alpha}<\infty$ if $\alpha<\frac 12.$

Combining these estimates shows that for $\alpha<\frac 12,$ we have
the estimate
\begin{equation}
  \|\bEta^{00}_{\lL}+\star dv^0_{\lL}\|_{H^1(\Omega_0)}\leq C'_{\alpha}\lL^{\alpha}.
\end{equation}
Interpolating between this estimate and~\eqref{eqn5.70.110}, we conclude that, for any
$0<\epsilon,$  there is a $C_{\epsilon}$ so that
\begin{equation}
   \|\bEta^{00}_{\lL}+\star dv^0_{\lL}\|_{H^{\frac{1}{2}}(\Omega_0)}\leq C'_{\epsilon}\lL^{1-\epsilon}.
\end{equation}
Lemma~\ref{tr_lem} now implies that
\begin{equation}
   \|\bEta^{00}_{\lL}+\star dv^0_{\lL}\|_{L^2(\pa\Omega_0)}\leq
   C''_{\epsilon}\lL^{1-\epsilon}.
\end{equation}
This completes the proof of Theorem~\ref{eqn5.70.110}.
\end{proof}

From the boundary conditions in~\eqref{eq:thin-shell-lap-trans}, it follows that
\begin{equation}
  \left|\frac{\pa u^{\pm}_{\lL}}{\pa n_{\pm}}\right|
  =\left|\frac{[\beta_{\lL}+\bEta^{00}_{\lL}+\star dv_{\lL}^0]_{\pa\Omega_{\pm}}}{dS_{\pa\Omega_{\pm}}}\right|,
\end{equation}
and therefore
\begin{equation}
  \|\pa_{n_{\pm}}u^{\pm}_{\lL}\|_{L^2(\pa\Omega_{\pm})}\leq
   C''_{\epsilon}\lL^{1-\epsilon},
\end{equation}
which implies
\begin{equation}
  \|u^{-}_{\lL}\|_{H^{\frac 32}(\Omega_{-})}\leq
    C''_{\epsilon}\lL^{1-\epsilon},\text{ and }
  \|u^{+}_{\lL}\|_{H^{\frac 32}_{\loc}(\Omega_{+})}\leq
     C''_{\epsilon}\lL^{1-\epsilon}.
\end{equation}
This is an improvement over~\eqref{eqn3.8.6}, which shows that the error in the
scattered magnetic fields
\begin{equation}
  \|\bEta_{\lL}^{\pm}-\bEta^{\out\pm}\|_{H^{\frac
      12}_{\loc}(\Omega_{\pm})}=O(\lL^{1-\epsilon}),
\end{equation}
for any $\epsilon>0.$

The current in $\Omega_0$ is given by
\begin{equation}
  \bj^0_{\lL}=d^*\bEta^0_{\lL}=d^*\beta_{\lL}+d^*(\bEta^{00}_{\lL}+\star dv^0_{\lL}).
\end{equation}
We get improved estimates on the error:
\begin{equation}\label{eqn3.54.6}
  \|\bj^0_{\lL}-d^*\beta_{\lL}\|_{L^2(\Omega_0)}\leq C_{\alpha}\lL^{\alpha}\text{ for any }\alpha<\frac 12. 
\end{equation}
From our earlier estimates we could only conclude that this difference
remain bounded as $\lL\to 0^+.$ Note that near to $\pa\Omega_{0\pm}$ we have
\begin{equation}
  d^*\beta_{\lL}=-\pa_{r_{\pm}}\left[e^{\frac{r_{\pm}}{\lL}}\psi(r_{\pm})\right]\gamma_{\pm}+
  e^{\frac{r_{\pm}}{\lL}}\psi(r_{\pm})d^*(dr_{\pm}\wedge\gamma_{\pm})+E_{\lL}d^*d\gamma_{\pm}.
\end{equation}
The $L^2$-norm of
$$e^{\frac{r_{\pm}}{\lL}}[\psi(r_{\pm})d^*(dr_{\pm}\wedge\gamma_{\pm})-\psi'(r_{\pm})\gamma_{\pm}]+E_{\lL}d^*d\gamma_{\pm}$$
is $O(\sqrt{\lL}).$ The $L^2$-norm
of the leading term,
$-\frac{1}{\lL}\left[e^{\frac{r_{\pm}}{\lL}}\psi(r_{\pm})\right]\gamma_{\pm},$
is $O(\lL^{-\frac 12}),$ though its $L^1$-norm remains bounded.  Hence we see
that, for any $\alpha<\frac 12,$
\begin{equation}
  \bj^0_{\lL}=-\frac{e^{\frac{r_{\pm}}{\lL}}\psi(r_{\pm})}{\lL}\gamma_{\pm}+O(\lL^{\alpha}),
\end{equation}
where the error is measured in the $L^2(\Omega_0)$-norm. This is
essentially the same as expression
for the limiting behavior given in~\eqref{eqn3.76.13}, but with a
better error estimate.

\section{Numerical Examples}\label{sec:numerics}
In this section, we illustrate the limiting behavior of the London equations
through their solutions for a collection of London penetration depths, $\lL,$ approaching zero. The equations
are solved using the boundary integral formulation presented in~\cite{EpRa1}. We
also compute the solution of the limiting static problem using a modification of
the standard magnetic field integral equation for the static case, modified to
account for the flux conditions prescribed on the $A$ and $B$ cycles,
see~\cite{cons}, for example.

For the numerical experiments, we consider a twisted torus geometry for the connected case, and nested tori of revolution for the thin-shell case. The boundary $\pa \Omega,$ for the twisted torus geometry, is parametrized
by $\boldsymbol{X}: [0,2\pi]^2 \to \pa \Omega$ with
\begin{equation}
\boldsymbol{X}(u,v) = \sum_{i=-1}^{2} \sum_{j=-1}^{2} \delta_{i,j} \begin{bmatrix}
\cos{(v)} \cos{((1-i)u + jv)} \\
\sin{(v)} \cos{((1-i)u + jv)} \\
\sin{((1-i)u + jv)}
\end{bmatrix}.
\end{equation}
The non-zero coefficients are $\delta_{-1,1} =0.17$, $\delta_{-1,0} =
0.11$, $\delta_{0,0} = 1$, $\delta_{1,0} = 4.5$, $\delta_{2,0} = -0.25$,
$\delta_{0,1} = 0.07$, and $\delta_{2,1} = -0.45$. For the thin-shell case, we
assume that both $\pa \Omega^{\pm}$ are circular tori of revolution, where the major and
minor radii for $\pa \Omega^{+}$ are $R = 2$ and $r=1$,
respectively, while for $\pa \Omega^{-}$, they are $R = 2$, and $r=0.5
  $ respectively. 

In all of the examples below the incoming fields, $\bEta^{\In}_{\pm}$ are $0,$ but the topological data are not. In~\Cref{fig:mag-stell}, we plot $\bEta^{+}_{\lL}\restrictedto_{\pa\Omega}$  for $\lL = 1, 1/8, 1/64$, and
$0$, while in~\Cref{fig:j-stell}, we plot the currents for $\lL = 1, 1/8, 1/64$, with the current flux through the surface $S_{A}$, $a_{1}^{+}  = 1$. 
As expected, the magnetic fields converge to the
limiting value as $\lL \to 0$, while the currents become increasingly singular
in the limit. In~\Cref{fig:conv-stell}, we plot $|\bEta_{\lL} -
\bEta_{0}|_{L^{2}(\pa \Omega)}$ which illustrates that the magnetic field 
does indeed converge to the limiting solution at the rate $O(\lL) $. The analogous figures for the thin-shell geometry
with fluxes $a_{1}^{+} = 1$, and $b_{1}^{-} = 0$ are plotted in~\Cref{fig:mag-thinshell-a},~\Cref{fig:j-thinshell-a}, 
and~\Cref{fig:conv-thinshell-a}, while the solutions with fluxes $a_{1}^{+}=0$, and $b_{1}^{-}=1,$  are plotted in
~\Cref{fig:mag-thinshell-b},~\Cref{fig:j-thinshell-b}, 
and~\Cref{fig:conv-thinshell-b}. 

For the solutions with fluxes $a_{1}^{+} = 0$, and $b_{1}^{-} = 1$, the exterior solutions $,\bEta^{\pm}_{\lL},$ are independent of $\lL$ and agree with the limiting solution for
any $\lL > 0$. In particular, the exterior magnetic fields are given by
\begin{equation}\label{eqn6.2.112}
\bEta^{+}_{\lL} = 0\,, \quad \bEta^{-}_{\lL} = c\begin{bmatrix}
-\frac{y}{(x^2+y^2)} \\ 
\frac{x}{x^2 + y^2} \\
0
\end{bmatrix} \, ,
\end{equation}
where the constant $c$ is determined to satisfy the flux condition on $S_{B}$. 
This surprising observation is a consequence of the fact that the solution of the vector Helmholtz equation, 
$$\left(\Delta - \frac{1}{\lL^2}\right) \,\tbEta^{0}_{\lL} = 0,\text{ in }\Omega_{0},$$ 
which agrees component-wise on $\pa\Omega_0$ with the data in~\eqref{eqn6.2.112}: $\bEta^{-}_{\lL}$ on $\pa \Omega^{-}$, and
$\bEta^{+}_{\lL}$ on $\pa\Omega^{+},$ also satisfies $d\,\tbEta^0_{\lL}=0$. Hence, for any $\lL>0,$ $(\bEta^{0}_{\lL}=\tbEta^0_{\lL},\bj^0_{\lL}=d^*\tbEta^0_{\lL})$ is the unique solution to the  London equations in $\Omega_0,$ for this boundary data.  This does not require the boundaries of the nested tori of revolution to be circular. It remains true when the thin-shell geometry is comprised of nested tori of revolution with any  cross-section.

For this special case, $\bEta_{\lL}\restrictedto_{\pa\Omega_0} = \bEta_{0}\restrictedto_{\pa\Omega_0},$  for all $\lL,$ which explains  the results in~\Cref{fig:conv-thinshell-b}. These solutions were computed with a tolerance of $10^{-6}.$ The plot is a reflection of the discretization and quadrature errors in their numerical evaluation.
\begin{figure}[h!]
\begin{center}
\includegraphics[width=0.7\linewidth]{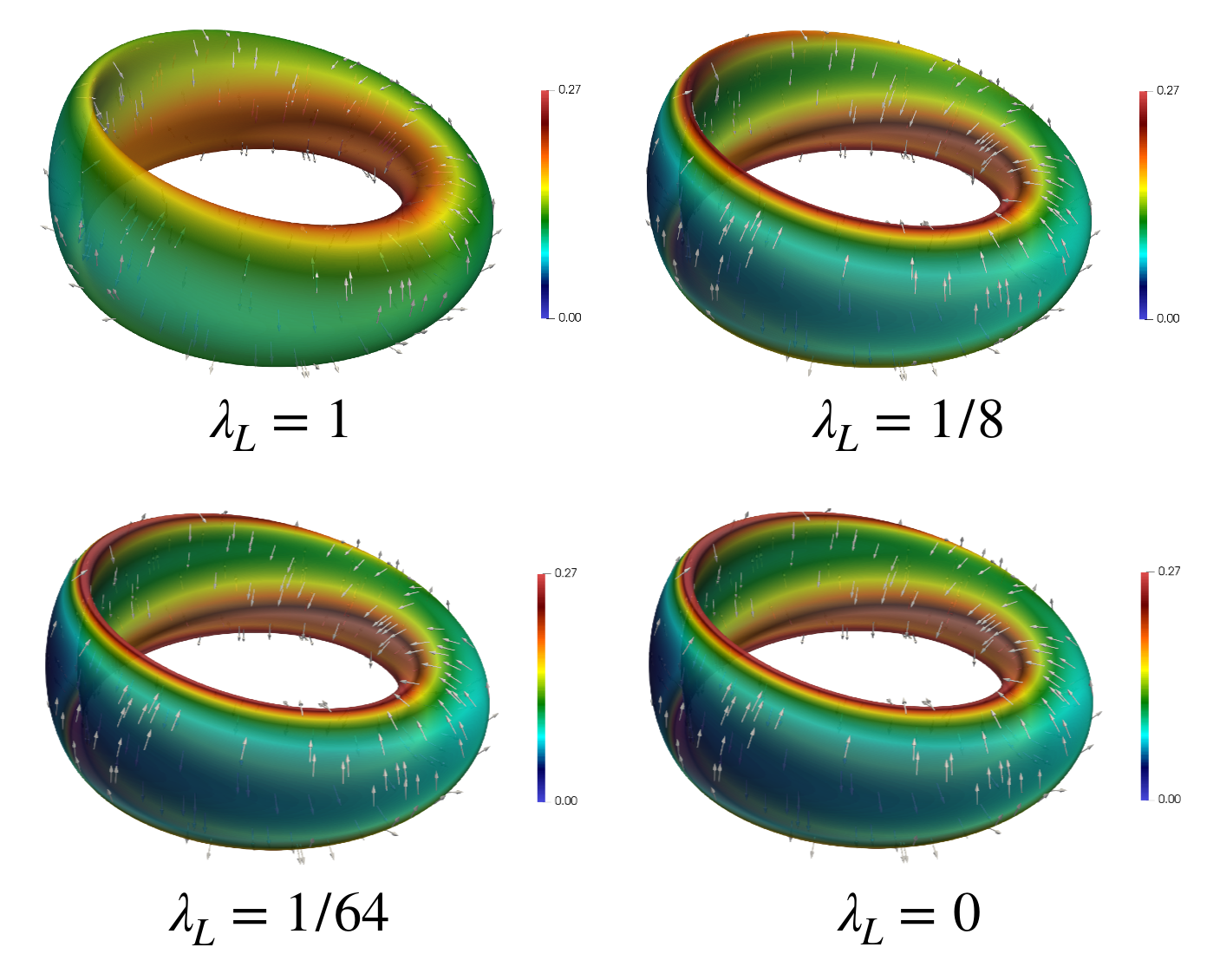}
\caption{Magnetic field $\bEta_{\lL}$ on the surface of the twisted torus as a function of $\lL$. The scalar plotted on surface is the magnitude of the field.}
\label{fig:mag-stell}
\end{center}
\end{figure}

\begin{figure}[h!]
\begin{center}
\includegraphics[width=\linewidth]{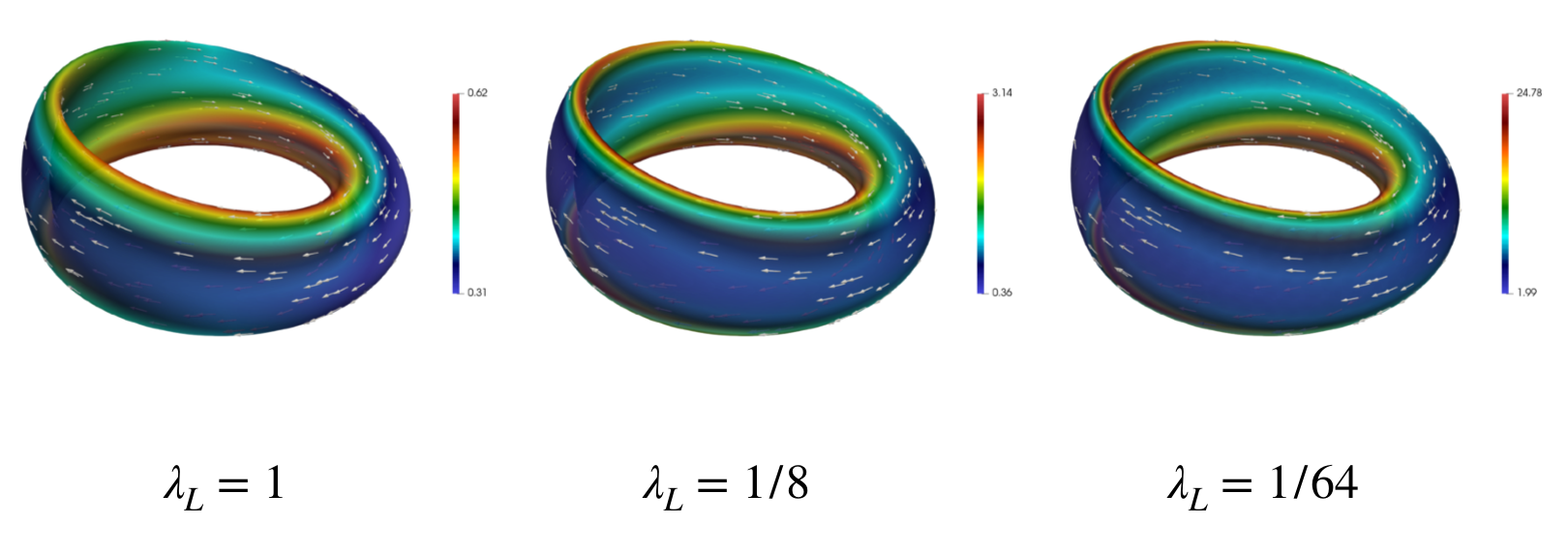}
\caption{Currents $\bj^{0}_{\lL}$ on the surface of the twisted torus as a function of $\lL$. The scalar plotted on surface is the magnitude of the current.}
\label{fig:j-stell}
\end{center}
\end{figure}

\begin{figure}[h!]
\begin{center}
\includegraphics[width=0.7\linewidth]{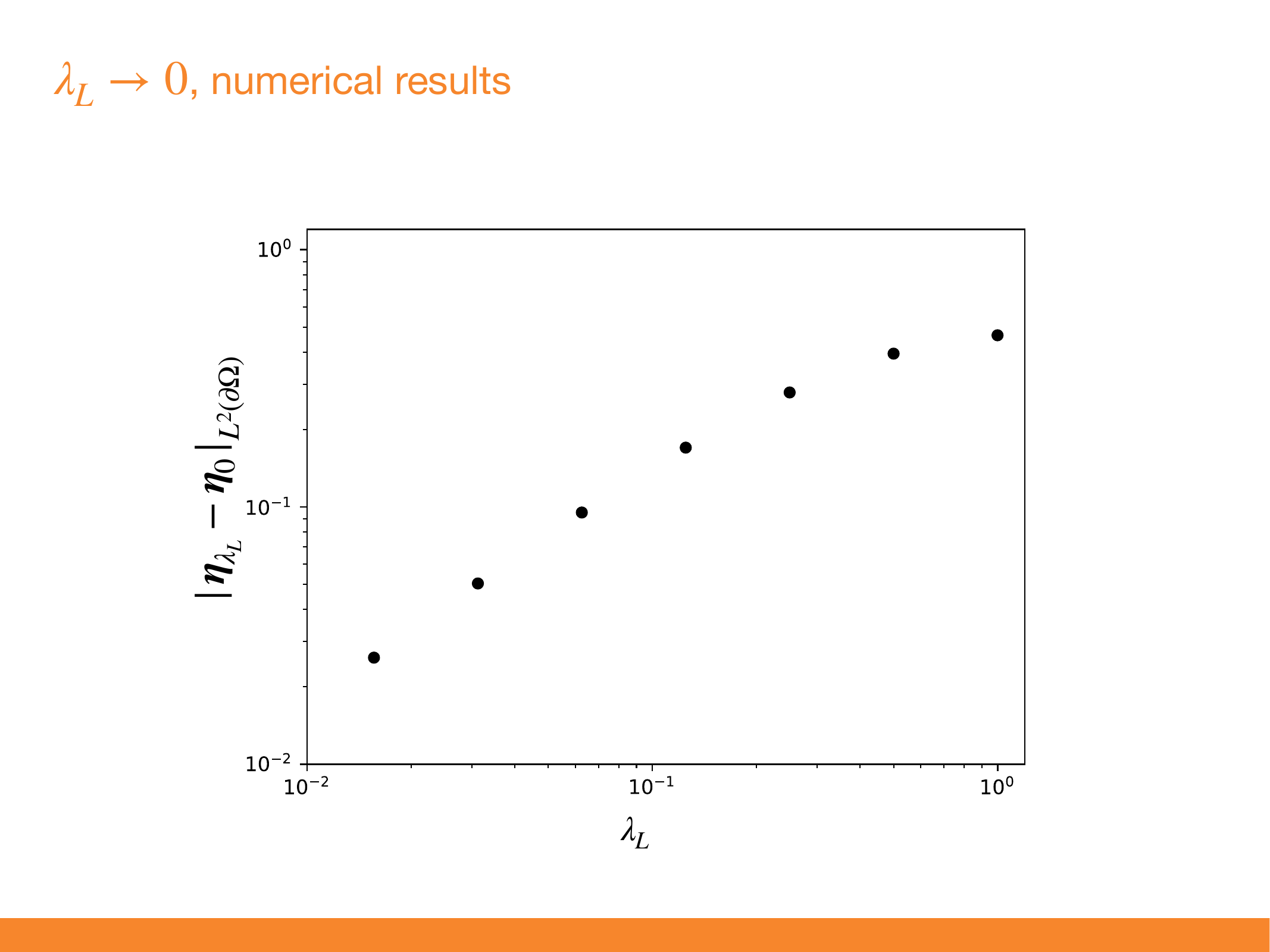}
\caption{Rate of convergence of magnetic field on surface as a function of $\lL \to 0$ for the twisted torus geometry.}
\label{fig:conv-stell}
\end{center}
\end{figure}

\begin{figure}[h!]
\begin{center}
\includegraphics[width=0.7\linewidth]{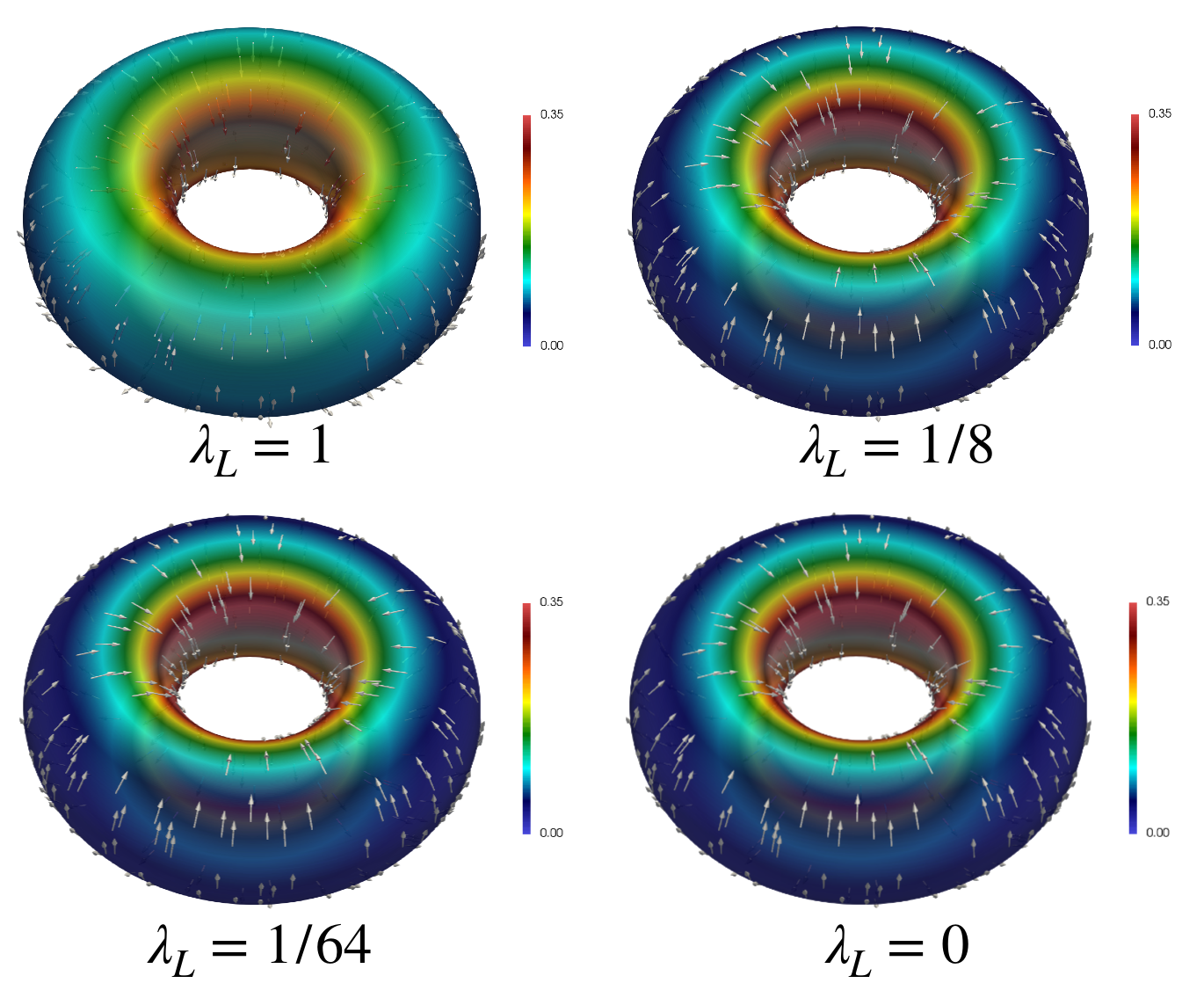}
\caption{Magnetic field $\bEta_{\lL}$ on the surface of the outer torus $\pa \Omega^{+}$ in the thin-shell torus as a function of $\lL$ when the fluxes $a_{1}^{+} = 1$, and $b_{1}^{-} = 0$. The scalar plotted on surface is the magnitude of the field.}
\label{fig:mag-thinshell-a}
\end{center}
\end{figure}

\begin{figure}[h!]
\begin{center}
\includegraphics[width=\linewidth]{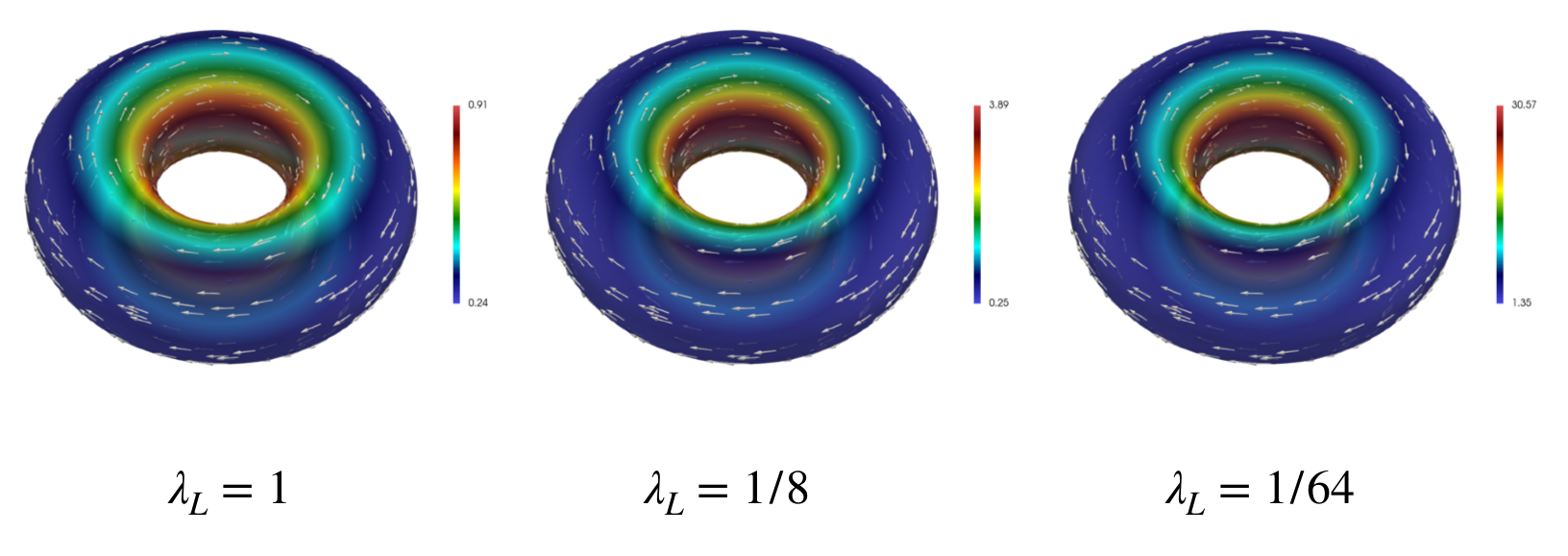}
\caption{Currents $\bj^{0}_{\lL}$ on the surface of the outer torus $\pa \Omega^{+}$ in the thin-shell torus as a function of $\lL$ when the fluxes $a_{1}^{+} = 1$, and $b_{1}^{-} = 0$. The scalar plotted on surface is the magnitude of the current.}
\label{fig:j-thinshell-a}
\end{center}
\end{figure}

\begin{figure}[h!]
\begin{center}
\includegraphics[width=0.7\linewidth]{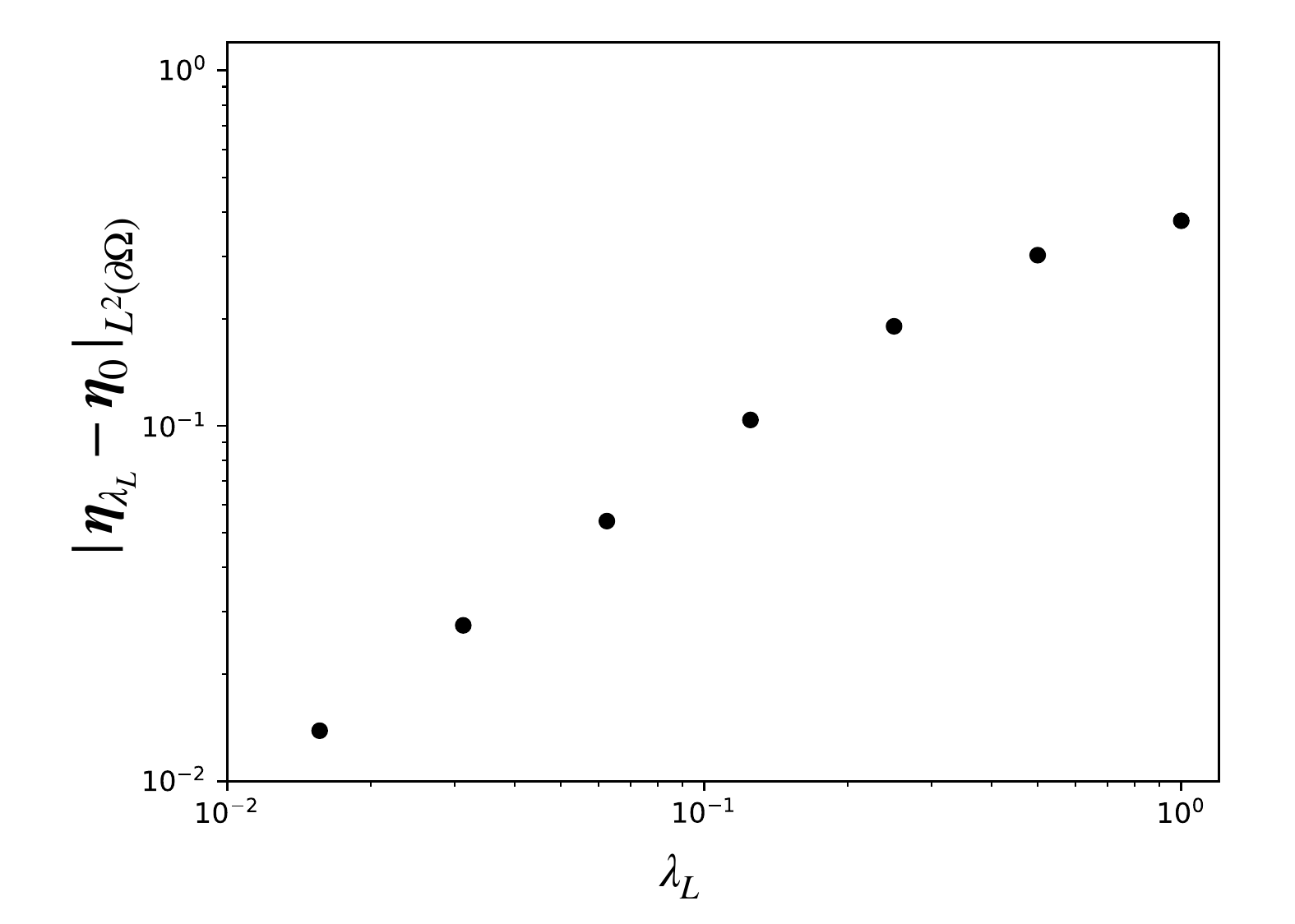}
\caption{Rate of convergence of magnetic field on surface as a function of $\lL \to 0$ for the thin-shell torus as a function of $\lL$ when the fluxes $a_{1}^{+} = 1$, and $b_{1}^{-} = 0$.}
\label{fig:conv-thinshell-a}
\end{center}
\end{figure}

\begin{figure}[h!]
\begin{center}
\includegraphics[width=0.7\linewidth]{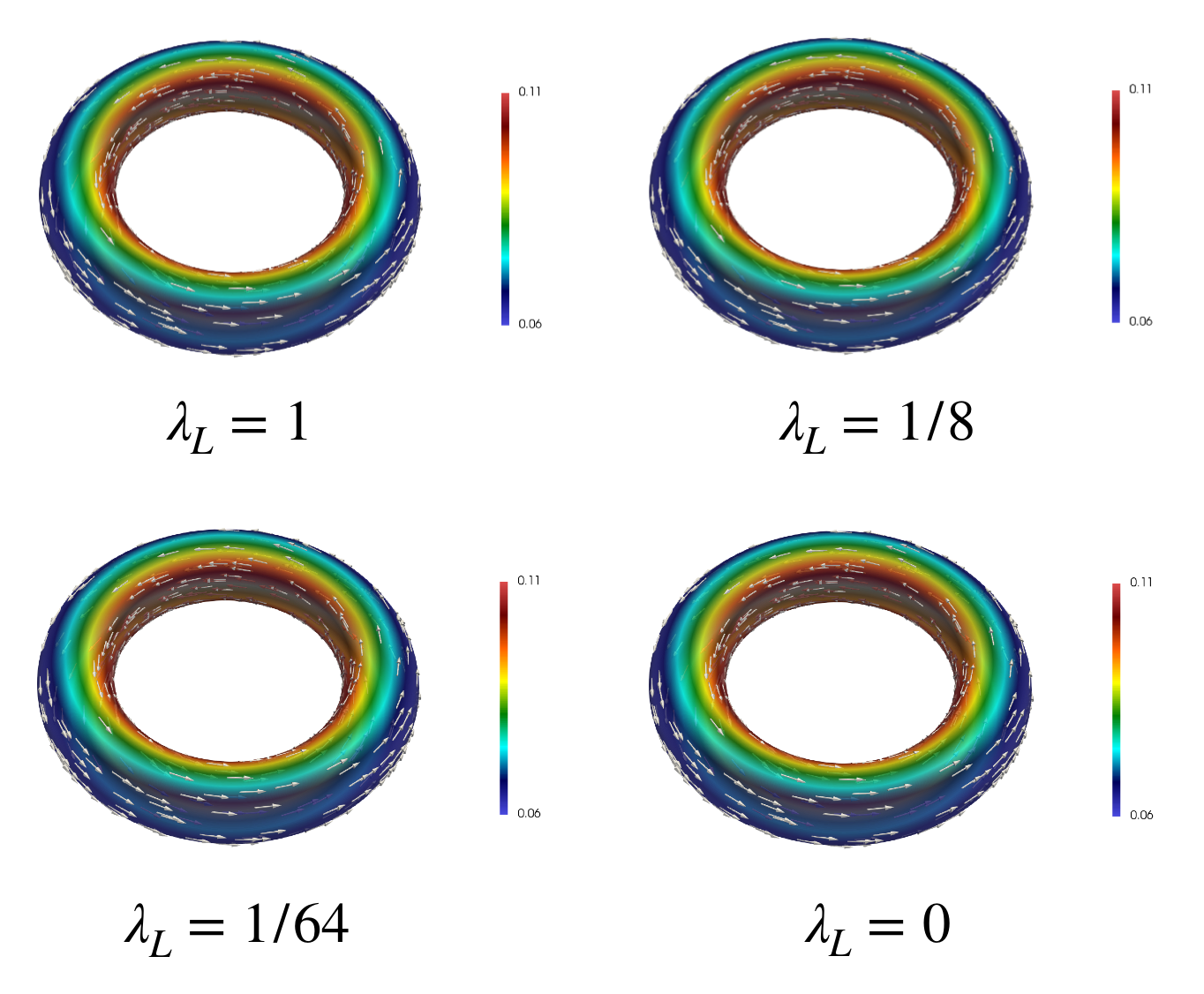}
\caption{Magnetic field $\bEta_{\lL}$ on the surface of the inner torus $\pa \Omega^{-}$ in the thin-shell torus as a function of $\lL$ when the fluxes $a_{1}^{+} = 0$, and $b_{1}^{-} = 1$. The scalar plotted on surface is the magnitude of the field.}
\label{fig:mag-thinshell-b}
\end{center}
\end{figure}

\begin{figure}[h!]
\begin{center}
\includegraphics[width=\linewidth]{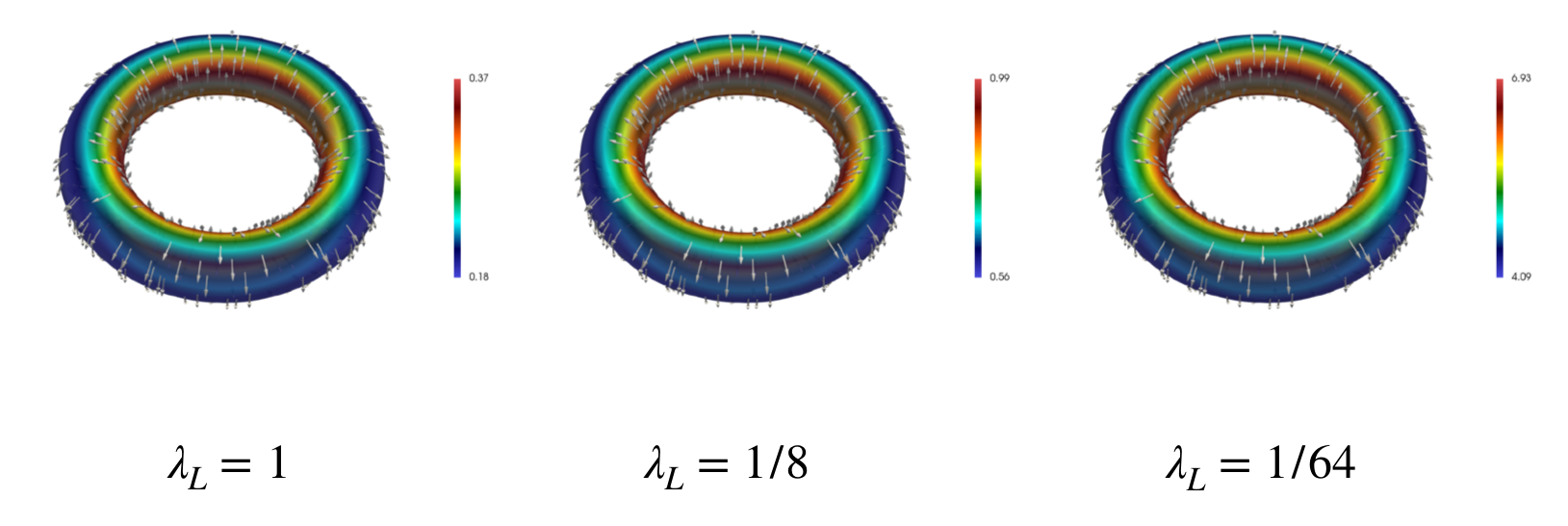}
\caption{Currents $\bj^{0}_{\lL}$ on the surface of the inner torus $\pa \Omega^{-}$ in the thin-shell torus as a function of $\lL$ when the fluxes $a_{1}^{+} = 0$, and $b_{1}^{-} = 1$. The scalar plotted on surface is the magnitude of the current.}
\label{fig:j-thinshell-b}
\end{center}
\end{figure}

\begin{figure}[h!]
\begin{center}
\includegraphics[width=0.7\linewidth]{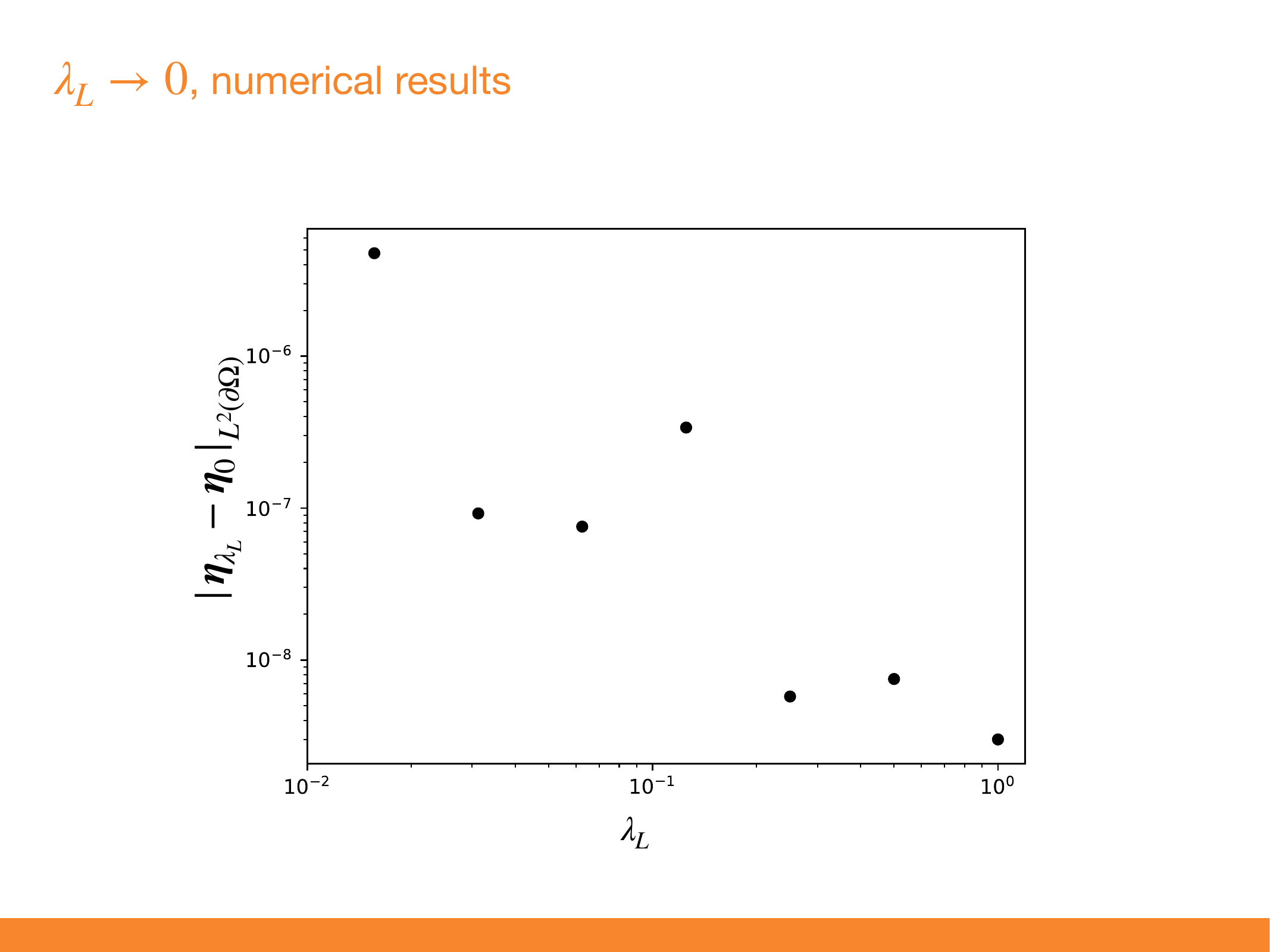}
\caption{Rate of convergence of the magnetic field on surface of the superconductor as a function of $\lL \to 0,$ for the thin-shell torus,  when the fluxes $a_{1}^{+} = 0$, and $b_{1}^{-} = 1$. Note that the exact solution $\bEta_{\lL}\restrictedto_{\pa\Omega}$ is independent of $\lL.$}
\label{fig:conv-thinshell-b}
\end{center}
\end{figure}

\section{Concluding remarks}
\label{sec:conclusions}
In this paper, we derive the limiting behavior of solutions to the London
equations, governing static currents in type-I superconductors, in the limit that
the London penetration depth, $\lL,$ tends to zero. We provide an explicit formula for an approximate solution for small $\lL$, and estimate how far it deviates from exact solution. In particular, we show that the
magnetic field in the exterior converges to the solution of a magneto-static
problem with topological constraints related to the current flux inside the
superconducting region. The magnetic field inside the superconducting region
converges to $0$ and current inside the superconducting region converges to a
current sheet supported on the boundary of the superconducting region. 

We show that the magnetic fields both in the exterior and interior regions
converge at the rate $O(\lL^{1-\varepsilon})$ for any $\varepsilon>0$. We also
show that the difference between the actual current $\bj^0_{\lL}$ and the
explicit approximate current is an $L^2$-function with norm
$O(\lL^{\frac{1-\varepsilon}{2}})$ for any $\varepsilon>0$. These results hold,
both when the boundary of the superconducting region is connected and in the
physically relevant case of thin-shell geometries, wherein the boundary has two
connected components. The analysis proceeds by eliminating the static currents
inside the superconducting region, and replacing the London equations with a
pseudodifferential equation of second order.

In earlier work~\cite{EpRa1}, we derived an equivalent integral equation
based on the generalized Debye formulation~\cite{EpGr1,EpGr2} for the London
equations at finite $\lL$. A natural question is whether the system of integral
equations admits a limit as $\lL \to 0$, and whether the limiting integral
equation solves the limiting system of partial differential equations derived in
this work. This indeed turns out to be the case, however the limiting equations are not Fredholm of second kind.  The details of this analysis will be reported in a forthcoming paper, see~\cite{EpRa3}.

\section{Acknowledgments}
The Flatiron Institute is a division of the Simons foundation. The authors would like to thank Leslie Greengard and Jeremy Hoskins for many useful discussions.

\begin{appendix}
\section{Proof of~\Cref{tr_lem}}
\label{app_tr_lem} 
In this section, we prove~\Cref{tr_lem}, which we restate below for completeness.
\tracelemma*
   
   \begin{proof}[Proof of Lemma]
   As noted earlier, the result is a consequence of the standard trace
   theorem for $s>\frac 12.$ First assume that $\Omega$ is itself a bounded subset.
   To prove it for $s=0$, assume that
   $\theta\in\cC^{\infty}(\overline{\Omega}),$ and
   $\varphi\in\cC^{\infty}(\pa\Omega),$ with $\Phi=\PI(\varphi)$ its harmonic
   extension to $\Omega.$ We use the fact that the Poisson operator
   \begin{equation}\label{eqnA.1.110}
     \PI:H^s(\pa\Omega)\longrightarrow H^{s+\frac 12}(\Omega)
   \end{equation}
   boundedly for $-\frac 12\leq s.$ See~\cite{Taylor2}.
   
   As $d(\Phi\theta)=d\Phi\wedge\theta,$ it
   follows from Stokes theorem that
   \begin{equation}
     \int_{\pa\Omega}\varphi\theta=\int_{\Omega}d\Phi\wedge\theta.
   \end{equation}
   Using Cauchy-Schwarz inequality and~\eqref{eqnA.1.110} we see that   
   \begin{equation}
     \left|\int_{\pa\Omega}\varphi\theta\right|\leq
     \|d\Phi\|_{L^2(\Omega)}\|\theta\|_{L^2(\Omega)}\leq
     \|\Phi\|_{H^{1}(\Omega)}\|\theta\|_{L^2(\Omega)}\leq
     C_0\|\varphi\|_{H^{\frac 12}(\pa\Omega)}\|\theta\|_{L^2(\Omega)},
   \end{equation}
   hence
   \begin{equation}
     \frac{
       \left|\int_{\pa\Omega}\varphi\theta\right|}{\|\varphi\|_{H^{\frac
           12}(\pa\Omega)}}
     \leq C_0\|\theta\|_{L^2(\Omega)}.
   \end{equation}
   As $H^{-\frac 12}(\pa\Omega)$ is the dual to $H^{\frac
     12}(\pa\Omega),$ taking the sup over $\varphi$ gives the stated estimate for $s=0,$
   \begin{equation}\label{eqnA.5.110}
     \|\theta\restrictedto_{\pa\Omega}\|_{H^{-\frac 12}(\pa\Omega)}\leq
     C_0\|\theta\|_{L^2(\Omega)}.
   \end{equation}
   The general case for $s=0$ follows using the density of smooth,
   closed 2-forms in $L^2(\Omega;\cZ^2).$

   The result is classical for $s>\frac 12;$ 
  we complete the proof of the lemma by interpolating between the
  $s=0$ case~\eqref{eqnA.5.110} and the  $s=1$
  case,
  \begin{equation}
     \|\theta\restrictedto_{\pa\Omega}\|_{H^{\frac 12}(\pa\Omega)}\leq
     C_1\|\theta\|_{H^1(\Omega)},
  \end{equation}
  to conclude that, for $0\leq s,$ there is a constant $C_s$ so that
   \begin{equation}
     \|\theta\restrictedto_{\pa\Omega}\|_{H^{s-\frac 12}(\pa\Omega)}\leq
     C_{s}\|\theta\|_{H^{s}(\Omega)}.
   \end{equation}

   If $\Omega$ unbounded, then let $R>0$ be chosen so that $\pa\Omega\subsubset
   B_R.$ We replace the operator $\PI$ in~\eqref{eqnA.1.110} with the operator,
   $\PI_R$ that takes $\varphi\in H^s(\pa\Omega)$ to the harmonic function,
   $\Phi_R,$ defined in $\Omega\cap B_R$ with
   $\Phi_R\restrictedto_{\pa\Omega}=\varphi,$ and $\Phi_R\restrictedto_{\pa
     B_R}=0.$ As $\PI_R:H^s(\pa\Omega)\to H^{s+\frac 12}(\Omega\cap B_R)$ is
   bounded for $-\frac 12\leq s,$ and, for smooth data,
    \begin{equation}
     \int_{\pa\Omega}\varphi\theta=\int_{\Omega\cap B_R}d\Phi_R\wedge\theta,
    \end{equation}
    the proof above proceeds as before to cover this case as well.
   
   \end{proof}

\end{appendix}

\end{document}